\newtheorem{thm}{Theorem}[section]
\newtheorem{lem}[thm]{Lemma}
\newtheorem{prop}[thm]{Proposition}
\newtheorem{defn}[thm]{Definition}
\newtheorem{rem}[thm]{Remark}
\newcommand{\norm}[1]{\|#1\|}
\newcommand{\normL}[1]{\big\|#1\big\|}
\newcommand{\abs}[1]{\left\vert#1\right\vert}
\newcommand{\babs}[1]{\big\vert#1\big\vert}
\newcommand{\pa}[1]{\left(#1\right)}
\newcommand{\bpa}[1]{\big(#1\big)}
\newcommand{\Bpa}[1]{\Big(#1\Big)}
\newcommand{\dotp}[2]{\langle #1,\,#2 \rangle}
\newcommand{\set}[1]{\left\{#1\right\}}
\newcommand{\R}{\mathbb R}
\newcommand{\E}{\mathbb E}
\newcommand{\T}{\mathbb T}
\newcommand{\N}{\mathbb N}
\newcommand{\Z}{\mathbb Z}
\newcommand{\supp}{\mathrm{supp}}
\newcommand{\Sg}{\supp(g)}
\newcommand{\En}{\mathcal{E}_n}
\newcommand{\An}{\mathcal{A}_n}
\newcommand{\LIP}[1]{\mathrm{Lip}(#1)}
\newcommand{\Lip}[1]{L_{#1}}
\newcommand{\diam}{\mathrm{diam}}
\newcommand{\ddt}{\displaystyle\frac{\partial}{\partial t}}
\newcommand{\NeigGama}{\mathcal{N}_{\Gamma}^{a_0}}
\newcommand{\NeigtGameta}{\mathcal{N}_{\tilde{\Gamma}}^{\eta}}
\newcommand{\proj}{\mathrm{Proj}}
\newcommand{\ball}{B}
\newcommand{\distH}{d_{\mathrm{H}}}
\newcommand{\dvol}{\mathrm{d}\vol}
\newcommand{\vol}{\mathrm{vol}}
\newcommand{\qandq}{\quad \text{and} \quad}
\newcommand{\qwithq}{\quad \text{with} \quad}
\def\1{\mathbb I}
\def\a{\alpha}
\def\e{\varepsilon}
\def\g{\gamma}
 \def\O{\Omega}
\newcommand*\rel@kern[1]{\kern#1\dimexpr\macc@kerna}
\newcommand*\widebar[1]{%
  \begingroup
  \def\mathaccent##1##2{%
    \rel@kern{0.8}%
    \overline{\rel@kern{-0.8}\macc@nucleus\rel@kern{0.2}}%
    \rel@kern{-0.2}%
  }%
  \macc@depth\@ne
  \let\math@bgroup\@empty \let\math@egroup\macc@set@skewchar
  \mathsurround\z@ \frozen@everymath{\mathgroup\macc@group\relax}%
  \macc@set@skewchar\relax
  \let\mathaccentV\macc@nested@a
  \macc@nested@a\relax111{#1}%
  \endgroup
}
\newcommand{\eqdef}{\stackrel{\mathrm{def}}{=}}
\newcommand{\tcb}[1]{{\color{black}{~#1}}}
\begin{document}
\title[]{Limits and consistency of non-local and graph approximations to the Eikonal equation}

\author{Jalal Fadili \and Nicolas Forcadel \and Thi Tuyen Nguyen\and Rita Zantout}

\address{{\sc Jalal Fadili}: Normandie Univ, ENSICAEN, CNRS, GREYC, France.}
\email{Jalal.Fadili@greyc.ensicaen.fr}

\address{{\sc Nicolas Forcadel}: Normandie Univ, INSA de Rouen, Laboratoire de Math\'ematique de l'INSA, Rouen, France.}
\email{nicolas.forcadel@insa-rouen.fr}

\address{{\sc Thi Tuyen Nguyen}: Normandie Univ, INSA de Rouen, Laboratoire de Math\'ematique de l'INSA, Rouen, France.}
\email{thi-tuyen.nguyen@insa-rouen.fr}

\address{{\sc Rita Zantout}: Normandie Univ, INSA de Rouen, Laboratoire de Math\'ematique de l'INSA, Rouen, France.}
\email{rita.zantout@insa-rouen.fr}

\date{\today}
\begin{abstract}
In this paper, we study a non-local approximation of the time-dependent (local) Eikonal equation with Dirichlet-type boundary conditions, where the kernel in the non-local problem is properly scaled. Based on the theory of viscosity solutions, we prove existence and uniqueness of the viscosity solutions of both the local and non-local problems, as well as regularity properties of these solutions in time and space.  We then derive error bounds between the solution to the non-local problem and that of the local one, both in continuous-time and Forward Euler time discretization. We then turn to studying continuum limits of non-local problems defined on random weighted graphs with $n$ vertices. In particular, we establish that if the kernel scale parameter decreases at an appropriate rate as $n$ grows, then almost surely, the solution of the problem on graphs converges uniformly to the viscosity solution of the local problem as the time step vanishes and the number vertices $n$ grows large.
\end{abstract}

\keywords{Eikonal equation; Non-local; Viscosity solution; Error bounds; Continuum limits; Weighted graphs.}
\subjclass[2000]{70H20, 49L25, 65N15, 58J32, 60D05, 05C90.}

\maketitle


\section{Introduction}

In recent years, nonlinear partial differential equations (PDEs) on graphs and networks have attracted increasing interest since they naturally arise in many practical problems in mathematics, physics, biology, economy and data science (e.g., internet and vehicular traffic, social networks, population dynamics, image processing and computer vision, machine learning); see \cite{Berkolaiko13,elb08,Garavello06,Pokornyi04} and references therein. Among those PDEs, Hamilton-Jacobi equations, including Eikonal-type equations, have been considered in \cite{Desquesnes17,del13,Oberman15,Ta09,Ta11,Toutain16} on weighted graphs for data processing, and in \cite{Achdou13,Camilli11,Camilli13,Imbert13,Schieborn13} on topological networks or other very special types of networks. From a different motivation, Hamilton-Jacobi equations on graphs were also studied in \cite{Shu18} to derive discrete versions of some functional inequalities.

Our main goal in this paper is to rigorously study continuum limits of the Eikonal equation defined on weighted graphs, as the number of vertices goes to infinity. The motivation behind considering the Eikonal equation on graphs is the ability to extend it to any discrete data that can be represented by weighted graphs. In such a setting, data points are vertices of the graph, and are connected by edges if sufficiently close in a certain ground metric. The edges are assigned weights (e.g., based on the distances between data points). Several works have considered the approximation of the Eikonal equation on triangular, unstructured meshes or grids; see \cite{Barth98,Carlini13,kimmel98} and references therein. Adaptation of the Eikonal equation on graphs for discrete data processing has been proposed in \cite{Ta09,del13}; see \eqref{eq:eikgraph}. This has led to many applications including semi-supervised clustering and classification on meshes, point clouds, or images~\cite{Ta09,del13,Toutain16,Desquesnes17}; see also \cite{Memoli05} which proposed a framework dedicated to solve the Eikonal equation on point clouds. Despite availability of compelling numerical evidence for the efficiency of \eqref{eq:eikgraph} for these tasks, a clear theoretical understanding of its solutions is lacking and no results on its consistency are available to the best of our knowledge. In particular it is largely open to determine whether solutions of the graph-based Eikonal equation converge, as the number of available data points/vertices increases, to a solution of a limiting equation in the continuum setting. It is our aim in this paper to settle this question.

\subsection{Problem statement}
Here and in the rest of the paper we use $|\cdot|$ to denote the euclidean norm
in $\R^m$,  $\LIP{\Sigma}$ is the space of Lipschitz continuous mappings on $\Sigma$, and for any $h \in \LIP{\Sigma}$, $\Lip{h}$ denotes its Lipschitz constant. For a non-empty closed subset $X \in \R^m$, the distance to $X$ is the function
\[
d(\cdot,X): x \in \R^m \mapsto \min_{z \in X} \abs{x-z} \in [0,+\infty[ .
\]
See also Section~\ref{subsec:notations} for the rest of notations. \\
Throughout, we will work with the following sets and functions satisfying the standing assumptions:\\
\fbox{\parbox{0.975\textwidth}{
\begin{enumerate}[label=({\textbf{H.\arabic*}})]  
\item $\Omega, \tilde{\Omega}$ are compact subsets of $\R^m$, with $\tilde{\Omega}$ a finite subset of $\Omega$; \label{assum:Om}
\item $\Gamma \subset \Omega$ and $\tilde{\Gamma} \subset \tilde{\Omega}$ are closed sets with $\Omega \setminus \Gamma$ open and $\tilde{\Omega} \setminus \tilde{\Gamma} \subset \Omega \setminus \Gamma$; \label{assum:Gam}
\item $P \in \LIP{\Omega \setminus \Gamma}$ and $\tilde{P}\in \LIP{\tilde{\Omega} \setminus \tilde{\Gamma}}$ are non-negative potential functions; \label{assum:P}
\item $\psi \in \LIP{\Omega}$ and $\tilde{\psi} \in \LIP{\tilde{\Omega}}$.  \label{assum:psi}
\item There exists $a_0, d_0>0$ such that $d(\cdot,\Gamma)$ is $C^1$ on $\NeigGama \setminus \Gamma$ where $\NeigGama \eqdef \set{x \in \Omega,\; d(x,\Gamma) < a_0}$,  and $|\nabla d(x,\Gamma)|\ge d_0$ for all $x \in \NeigGama \setminus \Gamma$.\label{assum:regulariteOmega}
\end{enumerate}
}}\\

{Assumptions \ref{assum:Om}-\ref{assum:Gam} imply that $\partial \Omega \subseteq \Gamma$. The fact that $\tilde{\Omega}$ is finite in assumption \ref{assum:Om} is only required to get the existence of a solution for the non-local problem \eqref{cauchy-J} (see Proposition \ref{pro:existence-J}). The rest of our results, for instance our error bounds, do not need this finiteness assumption. Nevertheless, since our primary motivation in this paper is the study of the Eikonal equation on graphs, this assumption is quite natural. Let us also point out that the existence in the case of a continuous set $\tilde{\Omega}$ should be obtained by a discretization of this set.
The regularity assumption \ref{assum:regulariteOmega} on the distance function is classically ensured when, for instance, $\Gamma$ is a compact smooth embedded manifold without boundary, in which case $d_0=1$; see Appendix~\ref{sec:distsmooth} for a discussion. Thus, in the light of the previous observation that $\partial \Omega \subseteq \Gamma$, \ref{assum:regulariteOmega} holds for instance if $\Omega$ is a smooth manifold and $\Gamma$ is a disjoint union of $\partial \Omega$ and a smooth embedded manifold without boundary. We also remark that readers familiar with the theory of viscosity solutions of Hamilton-Jacobi equations may have recognized that \ref{assum:regulariteOmega} is indeed very useful to construct super-solutions that are compatible with boundary conditions.
}


Let $G = (V,w)$ be a finite undirected weighted graph without parallel edges, where \tcb{$V \subset \Omega$} is the vertex set  and every edge $(u,v) \in V^2$ is given a weight $w(u,v)$, where $w: V^2 \to \R_+$ is the weight function. It is understood that $w(u,v)=0$ whenever $(u,v)$ are not connected. In \cite{del13}, the authors proposed the following Eikonal equation on a weighted graph $G$
{
\begin{equation}\label{eq:eikgraph}
\begin{cases}
\max_{v \in V}\sqrt{w(u,v)}(f(v)-f(u))_- = \tilde{P}(u), & u \in V \setminus V_0,\\
f(u) = 0 & u \in V_0,
\end{cases}
\end{equation}
where $(\cdot)_- \eqdef -\min(\cdot,0)$}, and $V_0 \subset V$ corresponds to the set of initial seed vertices. By analogy to the continuous setting, \eqref{eq:eikgraph} describes the evolution of a "propagation front" $V_0$ on the graph $G$. 

In this paper, inspired by \eqref{eq:eikgraph}, we propose to study the general {\textit{non-local}} Eikonal equation in a time-dependent form:
\begin{equation}\tag{\textrm{$\mathcal{P}_{\e}$}}\label{cauchy-J}
\begin{cases}
\ddt f^\e(u,t) = - \abs{\nabla_{J_\e}^- f^\e(u,t)}_\infty + \tilde{P}(u), & (u,t) \in  (\tilde{\Omega}\setminus \tilde{\Gamma}) \times ]0,T[, \\
f^\e(u,t) = \tilde{\psi} (u), & (u,t) \in  (\tilde{\Gamma} \times ]0,T[) \cup \tilde{\Omega} \times \set{0} . 
\end{cases}
\end{equation}
The stationary solution of \eqref{cauchy-J} would satisfy a corresponding Eikonal equation. There are many applications motivating \eqref{eq:eikgraph}, for instance in data processing, analysis and machine learning on graphs, on unstructured meshes or grids, and on point clounds; see \cite{Barth98,Carlini13,Desquesnes17,del13,kimmel98,Oberman15,Memoli05,Ta09,Ta11,Toutain16} with different choices made for the potential $\tilde{P}$, boundary points and data $(\tilde{\Gamma},\tilde{\psi})$.

In \eqref{cauchy-J}, we have defined
\[
\abs{\nabla_{J_\e}^- f^\e(u,t)}_\infty = \max_{v\in\tilde{\Omega}}\nabla_{J_\e}^- f^\e(u,v,t),
\]
where $\nabla_{J_\e}^-$ is a non-local operator coined the weighted directional internal gradient operator, introduced in \cite{del13}, and reads
\[
\nabla_{J_\e}^- f^\e(u,v,t) = J_\e(u,v) (f^\e(v,t) - f^\e(u,t))_-,
\]
where, for $\e > 0$, $J_\e: \R^m \times \R^m \to \R_+$ is an $\e$-scaled kernel function,
\[
J_\e(u,v) = \frac{1}{\e}J\pa{\frac{u}{\e},\frac{v}{\e}} \qwithq  J(u,v) = \frac{1}{C_g}g(|u-v|),
\]
where $C_g > 0$ will be defined in Remark \ref{assum:gubnd}. The above says that the kernel $J$ is isotropic and $g: \R_+ \to \R_+$ is its radial profile. It is easy to see that $\abs{\nabla_{J_\e}^- f^\e(u,t)}_\infty $ can be equivalently rewritten as 
\begin{equation}\label{eq:normnablaJe}
\abs{\nabla_{J_\e}^- f^\e(u,t)}_\infty = \max_{v\in\tilde{\Omega}}J_\e(u,v)(f^\e(u,t)-f^\e(v,t)) .
\end{equation}
In the above, $\e$ is a length scale parameter allowing to take into account data density. Indeed, scaling $J$ by $\e$ is intended to give significant weight to pairs of points up to distance $\e$. To capture properly interactions at scale $\e$, $g$ has to decay to zero at an appropriate rate. More precisely, our set of admissible kernels will have to satisfy the following requirements:\\
\fbox{\parbox{0.975\textwidth}{\begin{enumerate}[label=({\textbf{H.\arabic*}}),start=6] 
\item $g$ is a non-negative function. \label{assum:gpos}
\item $\exists r_g > 0$ such  that $\supp(g) \subset [0,r_g]$. \label{assum:gsupp}
\item $\exists a \in ]0,r_g[$ such that $g$ is decreasing on $[0,a]$ and satisfies $g(a)>0$. We denote by $c_g \eqdef g(a)$. \label{assum:gdec}
\item $g$ is $L_g$-Lipschitz continuous on its support. \label{assum:glip}
\end{enumerate}}}\\
These assumptions on the kernel are mild and rather standard.
\begin{rem} \label{assum:gubnd}
We define 
\[
C_g \eqdef \sup_{t \in \R_+} tg(t),
\]
$C_g$ is definitely bounded by \ref{assum:gsupp} and \ref{assum:glip}. Moreover, in our proof, it turns out that the support compactness assumption \ref{assum:gsupp} is not mandatory. In fact, what is really important is that the supremum in $C_g$ is attained in $[0, r_g]$ and that $g$ is bounded. Nevertheless, to make the proofs simpler to follow, we avoid delving into these technicalities and impose \ref{assum:gsupp}. 
\end{rem}




\eqref{cauchy-J} covers the case of weighted graphs with $n$ vertices as a special case by properly instantiating the sets $(\tilde{\Omega},\tilde{\Gamma})$; see Section~\ref{sec:eikconvgraphs}. Having this in mind, for a given $n$-dependent scaling $\e_n$, we will eventually pose the question of consistency on graphs as the continuum limit of the solution to \eqref{cauchy-J} as $n \to +\infty$, as well as its time discretized versions. We will therefore consider the time-dependent form of the {\textit{local}} Eikonal equation on the continuum:
\begin{equation}\tag{\textrm{$\mathcal{P}$}}\label{cauchy}
\begin{cases}
\ddt  f(x,t) = - \abs{\nabla f(x,t)} + P(x), & (x,t) \in (\Omega \setminus \Gamma) \times ]0,T[,\\
f(x,t) = \psi (x),  & (x,t) \in (\Gamma \times ]0,T[) \cup  \Omega  \times \{ 0 \}  
\end{cases}
\end{equation}
where $\nabla f(x,t)$ denotes the (weak) gradient of $f$ in the space variable $x$.

Before going further, let us perform an easy yet informative calculation just to convince the reader that it is reasonable to hope for a convergence result of a solution of \eqref{cauchy-J} to that of \eqref{cauchy}. More precisely, let us look at the behaviour of the non-local directional internal gradient operator as $\e$ is sent to $0$. For simplicity, we assume that $\tilde{\Omega}=\Omega$  and $\Gamma=\partial \Omega$. Though this case does not comply with assumption \ref{assum:Om}, it gives a fair idea of the computations we will have to carry out rigorously, and we will explain in Section~\ref{subsec:eikconvcont} how to handle properly the case of a discrete $\tilde \Omega$. Let $u \in \Omega \setminus \partial \Omega$. To avoid trivialities, we assume that $\exists v \in \Omega$ such that $|u-v| \in \e \supp(g)$ (this assumption will be discussed in detail later, see Sections~\ref{subsec:eikconvcont} and \ref{sec:eikconvgraphs}). 
If $f$ is differentiable at $u$, then we have for $\e$ sufficiently small,
\begin{align*}
\abs{\nabla_{J_\e}^- f(u,t)}_\infty 
&= \max_{v\in{\Omega}, |u-v| \in \e \supp(g)} J_\e(u,v) (f(u,t) - f(v,t)) \\
&= \max_{v\in{\Omega}, |u-v| \in \e \supp(g)} (\e C_g)^{-1}g\pa{\frac{|u-v|}{\e}} (f(u,t) - f(v,t)) \\
&= \max_{v\in{\Omega}, |u-v| \in \e \supp(g)} (\e C_g)^{-1}g\pa{\frac{|u-v|}{\e}} \bpa{\dotp{\nabla f(u,t)}{u-v} + o(1)} \\
&= \max_{\tau \in [0,r_g]}(\e C_g)^{-1}g(\tau)\max_{v\in{\Omega},|u-v| = \e\tau} \dotp{\nabla f(u,t)}{u-v} + o(1) .
\end{align*}
For $\e$ small enough, we have $\ball_{\e r_g}(u) \subset \Omega$. This entails that
\begin{align*}
\abs{\nabla_{J_\e}^- f(u,t)}_\infty 
&= \max_{\tau \in [0,r_g]}(\e C_g)^{-1}g(\tau)\max_{v \in \ball_{\e\tau}(u)} \dotp{\nabla f(u,t)}{v-u} + o(1) \\
&= \max_{\tau \in [0,r_g]} (\e C_g)^{-1}g(\tau) \e\tau \abs{\nabla f(u,t)} + o(1) \\
&= \abs{\nabla f(u,t)}+ o(1) .
\end{align*}
It is our aim in this paper to give this formal calculation a rigorous meaning and to derive convergence rates. In particular, an important issue in the the previous computation is that the choice of $\e$ depends on the point $u$. We will explain in Section~\ref{subsec:eikconvcont} how to resolve this difficulty.

\subsection{Contributions and relation to prior work}
In this work we intend to provide two related contributions. Their combination allow to quantitatively analyze the Eikonal equation on graph sequences and their continuum limiting behaviour. Our work relies on the important theory of viscosity solutions~\cite{barles11}.

We start by showing that both the local problem \eqref{cauchy} and the non-local one \eqref{cauchy-J} are well-posed, i.e., existence and uniqueness of their viscosity solutions (see Proposition~\ref{pro:existence} and Proposition~\ref{pro:existence-J}). We then establish the regularity properties of these solutions in time and space in Theorem~\ref{lip-viscosity} and Theorem~\ref{lip-viscosity-J}. Capitalizing on this, our first consistency result provides error bounds between the viscosity solutions of \eqref{cauchy-J} and \eqref{cauchy} (Theorem~\ref{thm:continuous-time-estimate}). This is extended to the case where \eqref{cauchy-J} is discretized in time using forward Euler schemes (Theorem~\ref{thm:discrete-fw-estimate}). Though we focus on finite differences in time, due to their popularity and simplicity, we believe that our proof can be adapted to other schemes such as those of semi-Lagrangian type. We finally apply these error bounds to a sequence of random weighted graphs (Theorem~\ref{thm:graph-bw-estimate}). This entails in particular that the time-discretized solution on a weighted graph with $n$ vertices and an appropriately decreasing scale parameter $\e_n$, converges almost surely uniformly to the viscosity solution of \eqref{cauchy} as $n \to +\infty$ and the time step goes to $0$. 

Studying consistency and continuum limits of certain evolution and variational problems on graphs and networks is an active research area; see \cite{Hafiene18,Hafiene20,rossi,medv,medvedevrandom,Trillos16,Slepcev2016,Slepcev2017,Caroccia20} for a non-exhaustive list and references therein. In particular, the authors in \cite{Calder19,Roith21} studied continumm limits of Lipschitz learning on graphs. The Euler-Lagrange equation for Lipschitz learning, as considered in \cite{Calder19}, correspond to a stationary special case of \eqref{cauchy-J}, where the operator \eqref{eq:normnablaJe} is replaced by the $\infty$-Laplacian on graphs, $\tilde{P} \equiv 0$, $\tilde{\Omega}$ is a set of $n$ points in the flat torus $\T^m = \R^m/\Z^m$, and $\tilde{\Gamma} \subset \tilde{\Omega}$ is a fixed finite collection of points. In such a setting, it is proved in \cite{Calder19} that the solution of the discrete problem converges uniformly to the unique viscosity solution of an $\infty$-Laplace type equation on the flat torus, as $\e_n \to 0$ when $n \to +\infty$. The limit equation turns out to be the stationary version of \eqref{cauchy} where $\Omega=\T^m$, $P \equiv 0$, $\psi = \tilde{\psi}$, $\Gamma = \tilde{\Gamma}$, and $\abs{\nabla \cdot}$ is replaced with the $\infty$-Laplacian. While finalizing our paper, we also became aware of the recent work of \cite{Roith21} who used tools from $\Gamma$-convergence theory to prove asymptotic consistency of Lipschitz learning on graphs (though the $\Gamma$-limit is not unique), allowing moreover that $\Omega$ to be a sufficiently smooth closed set and $\tilde{\Gamma}$ possibly different from $\Gamma$. In both \cite{Calder19,Roith21}, for consistency to hold, it is required that the Hausdorff distance between $\tilde{\Omega}$ and $\Omega$ (and also between $\tilde{\Gamma}$ and $\Gamma$) is $o(\e^\alpha)$ ($\alpha=3/2$ in \cite{Calder19} and $\alpha=1$ in \cite{Roith21}). Some of their assumptions are different or stronger from those we require in this paper. In addition, we are not aware of any work which establishes continuum limits for an Eikonal equation on weighted graphs of the form \eqref{cauchy-J}. We also provide and analyze convergence of a concrete forward Euler discrete-time scheme to solve \eqref{cauchy-J}. It is also important to stress the fact that our primary interest is in providing error bounds and non-asymptotic convergence rates. This is known to be more challenging than deriving mere asymptotic consistency results.

Motivated by a continuous version of the shortest path problem, numerical approximations of the Hamilton-Jacobi equations of Eikonal-type defined on a topological network were studied in \cite{Camilli11,Camilli13}. A topological network is basically a graph embedded in Euclidean space, i.e., it is a collection of pairwise different points (vertices) in a Euclidean space connected by differentiable, non self-intersecting curves (smooth edges). This is a very special network structure far different from the weighted graph setting we study here.   

\subsection{Paper organization}
The paper is organized as follows. In Section~\ref{sec:wellposed}, we show that \eqref{cauchy} and \eqref{cauchy-J} are well-posed in the sense of viscosity solutions, and we establish some important regularity results that will be central in our error bounds. Section~\ref{sec:main} states the main results of this paper. We start with a key error bound between solutions of problems \eqref{cauchy-J} and \eqref{cauchy} in both time-continuous case and time-discrete cases using explicit/forward Euler schemes. We then turn to applying these results to weighted graphs in Section~\ref{sec:eikconvgraphs}.

\subsection{Notations}\label{subsec:notations}
In what follows, we will denote $\dotp{\cdot}{\cdot}$ the scalar product on $\R^m$, and $\ball_r(x)$ the Euclidean ball centered at $x \in \R^m$ of radius $r$. For a non-empty and closed subset $X \in \R^m$ and $x \in \R^m$, we denote by $\proj_X(x)$ the projection of $x$ on $X$, i.e., the set of nearest points of $x$ in $X$: 
\[
\proj_X(x) = \set{z \in X: \abs{x-z} = d(x,X)} .
\]
Since $X$ is non-empty and closed, $\proj_X(x)$ is non-empty at any $x \in \R^m$ but is not necessarily single-valued. The diameter of $X$ is $\diam(X) = \sup_{(x,z) \in X^2}|x-z|$. Let $X$ and $Y$ be two non-empty subsets of $\R^m$. Their Hausdorff distance is defined as 
\[
\distH(X,Y) = \max\pa{\sup_{x \in X} d(x,Y),\sup_{y \in Y} d(y,X)} .
\]
It is finite when $X$ and $Y$ are bounded, and when $X$ and $Y$ are closed, then $\distH(X,Y)=0 \iff X=Y$.

We will denote $\normL{\cdot}_{L^\infty(\Sigma)}$, the supremum norm on a domain $\Sigma \subset \R^m$. 

To lighten notation, we denote the bounded space-time cylinders $\Omega_T = \Omega \times [0,T]$ and $\partial \Omega_T = (\Gamma \times ]0,T[) \cup \Omega \times \{ 0\}$.
We define similarly $\tilde{\Omega}_{T}$ and $\partial \tilde{\Omega}_{T}$. For a time interval $[0,T[$ and $N_T \in \N$, we also use the shorthand notation $\tilde{\Omega}_{N_T} = \tilde{\Omega} \times \{0,\ldots,t_{N_T} \}$ and $\partial \tilde{\Omega}_{N_T} = \tilde{\Gamma} \times \{t_1,\ldots,t_{N_T} \} \cup  \tilde{\Omega}  \times \{ 0 \}$.

\section{Well-posedness and regularity results}
\label{sec:wellposed}

\subsection{Problem~\eqref{cauchy}}
Since we will work with viscosity solutions, we refer to \cite{barles11,cil92,cl86,i86} for a good introduction. 
\tcb{The notion of viscosity solutions was introduced by Crandall and Lions \cite{cl83} as a notion of weak solutions for a class of partial differential equations of Hamilton-Jacobi type. In this theory, the derivatives of the unknown  are replaced by the derivative of some test functions (see definition below).}
In order to give the definition of  viscosity solution for problem \eqref{cauchy}, we first recall the definition of upper and lower semi-continuous envelope for a locally bounded function $f:\Omega_T \to \R$, respectively given by
\[
f^*(x,t) \eqdef \limsup_{(y,s)\to (x,t)} f(y,s) \qandq 
f_*(x,t) \eqdef \liminf_{(y,s)\to (x,t)} f(y,s).
\]

\begin{defn}[Viscosity solution for \eqref{cauchy}]\label{def:visccauchy}
An upper semi-continuous function (usc) function $f:\Omega_T \to \R$ is a viscosity sub-solution of \eqref{cauchy}
in $(\Omega \setminus \Gamma) \times ]0,T[$  if  for any $\varphi \in C^1((\Omega \setminus \Gamma ) \times ]0,T[)$ such that $f-\varphi$ reaches a local maximum point at $(x_0,t_0) \in (\Omega \setminus \Gamma) \times ]0,T[$, one has
\begin{eqnarray*}
\ddt \varphi(x_0,t_0) \leq - \vert \nabla \varphi(x_0,t_0) \vert + P(x_0).
\end{eqnarray*}
The function $f$ is a viscosity sub-solution of \eqref{cauchy} in $\Omega_T$ if it satisfies moreover $f(x,t) \le \psi(x)$ for all $(x,t) \in \partial \Omega_T$.

A lower semi-continuous (lsc) function $f:\Omega_T \to \R$ is a viscosity super-solution of \eqref{cauchy} in $(\Omega \setminus \Gamma) \times ]0,T[$ if for any $\varphi \in C^1((\Omega \setminus \Gamma ) \times ]0,T[)$ such that $f-\varphi$ attains a local minimum point at $(x_0,t_0) \in (\Omega \setminus \Gamma) \times ]0,T[$, one has
\begin{eqnarray*}
\ddt \varphi(x_0,t_0) \geq - \vert \nabla \varphi(x_0,t_0) \vert + P(x_0).
\end{eqnarray*}
The function $f$ is a viscosity super-solution of \eqref{cauchy} in $\Omega_T$ if it satisfies moreover $f(x,t)\ge \psi(x)$ for all $(x,t) \in \partial \Omega_T$. %
 
Finally, a locally bounded function $f :\Omega_T\to \R$ is a viscosity solution of \eqref{cauchy} in $(\Omega \setminus \Gamma) \times ]0,T[$ (resp. in $\Omega_T$ ) if  $f^*$ is a viscosity sub-solution and $f_*$ is a viscosity super-solution of \eqref{cauchy} in $(\Omega \setminus \Gamma) \times ]0,T[$ (resp. in $\Omega_T$).
\end{defn}


We continue with a comparison principle for problem \eqref{cauchy}.
\begin{prop}[Comparison principle for~\eqref{cauchy}]\label{pro:PC}
Suppose that assumptions~\ref{assum:Om}--\ref{assum:psi} hold. Let $f$, an usc function, be a sub-solution of \eqref{cauchy} and $g$, a lsc function, be a super-solution of \eqref{cauchy}. Then 
\[
f \le g \quad {\rm in}\;  \Omega_T.
\]
\end{prop}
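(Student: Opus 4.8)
The plan is to argue by contradiction via the classical doubling-of-variables technique of Crandall--Ishii--Lions, adapted to the parabolic (time-dependent) setting. Suppose $M \eqdef \sup_{\Omega_T}(f-g) > 0$. To push the maximum away from the final time and to generate the strict inequality needed at the end, I first penalize in time: for small $\eta>0$ set $f_\eta(x,t) \eqdef f(x,t) - \tfrac{\eta}{T-t}$, which is a \emph{strict} subsolution (it picks up an extra $-\tfrac{\eta}{(T-t)^2}<0$), and for $\eta$ small enough one still has $\sup_{\Omega_T}(f_\eta-g)>0$ while $f_\eta\to-\infty$ as $t\to T$.

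Next I double both the space and time variables: for $\e>0$ consider
\[
\Phi_\e(x,t,y,s) \eqdef f(x,t) - g(y,s) - \frac{|x-y|^2}{2\e} - \frac{|t-s|^2}{2\e} - \frac{\eta}{T-t}
\]
on the compact set $\Omega_T \times \Omega_T$. Since $f$ is usc, $-g$ is usc (both bounded above by semicontinuity on a compact domain), the penalty is continuous and tends to $-\infty$ near $t=T$, the maximum is attained at some $(\bar x,\bar t,\bar y,\bar s)$. The standard penalization estimates then give $\tfrac{1}{2\e}\big(|\bar x-\bar y|^2 + |\bar t-\bar s|^2\big)\to 0$ and, along a subsequence, $(\bar x,\bar t),(\bar y,\bar s)\to(\hat x,\hat t)$ with $(f-g-\tfrac{\eta}{T-t})(\hat x,\hat t)=\sup_{\Omega_T}(f_\eta-g)>0$. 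The crucial localization step is that the maximizer is interior: on $\partial\Omega_T$ the boundary inequalities of Definition~\ref{def:visccauchy} give $f\le\psi\le g$, hence $f-g\le 0 < \sup(f_\eta-g)$ there, so necessarily $(\hat x,\hat t)\in(\Omega\setminus\Gamma)\times\,]0,T[$. Because $\Omega\setminus\Gamma$ is open (assumption~\ref{assum:Gam}) and the penalty keeps $\bar t$ bounded away from $T$, for $\e$ small enough all four quantities $\bar x,\bar y\in\Omega\setminus\Gamma$ and $\bar t,\bar s\in\,]0,T[$ lie in the interior. Note that the strong Dirichlet conditions make \ref{assum:regulariteOmega} unnecessary for this step.

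Finally I exploit the viscosity inequalities at these interior points. Writing $p\eqdef\tfrac{\bar x-\bar y}{\e}$, the map $(x,t)\mapsto f(x,t)-\big(\tfrac{|x-\bar y|^2}{2\e}+\tfrac{|t-\bar s|^2}{2\e}+\tfrac{\eta}{T-t}\big)$ has an interior local maximum at $(\bar x,\bar t)$, so the subsolution inequality yields $\tfrac{\bar t-\bar s}{\e}+\tfrac{\eta}{(T-\bar t)^2}\le -|p|+P(\bar x)$; likewise the supersolution inequality for $g$ at $(\bar y,\bar s)$, with the \emph{same} gradient $p$, gives $\tfrac{\bar t-\bar s}{\e}\ge -|p|+P(\bar y)$. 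Subtracting, the time and $|p|$ terms cancel exactly — this is where the simple structure $H(x,p)=|p|-P(x)$ pays off — leaving
\[
\frac{\eta}{(T-\bar t)^2} \le P(\bar x)-P(\bar y) \le \Lip{P}\,|\bar x-\bar y|.
\]
The left-hand side is bounded below by $\eta/T^2>0$, whereas $|\bar x-\bar y|\to 0$ as $\e\to 0$, a contradiction for $\e$ small. Hence $M\le 0$ and $f\le g$ on $\Omega_T$. I expect the main obstacle to be the localization of the maximizer to the interior (ruling out the parabolic boundary $\partial\Omega_T$ and the final time $t=T$), rather than the concluding Hamiltonian manipulation, which is routine once the gradient terms match and the Lipschitz continuity of $P$ is invoked.
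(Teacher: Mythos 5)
Your argument is the classical doubling-of-variables comparison proof, which is essentially the content of the reference the paper cites in lieu of a proof (\cite[Theorem~5.1, Remark~5.1]{barles11}): the space-time doubling, the localization via $f\le\psi\le g$ on $\partial\Omega_T$, the exact cancellation of the gradient terms for the Hamiltonian $|p|-P(x)$, and the final use of $\Lip{P}$ are all correct, as is your remark that \ref{assum:regulariteOmega} plays no role here.

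There is, however, one genuine gap, and it sits at the very first reduction rather than in the localization you worried about. From $M\eqdef\sup_{\Omega_T}(f-g)>0$ you infer that $\sup_{\Omega_T}(f_\eta-g)>0$ for $\eta$ small. This fails when the positive supremum of $f-g$ is attained \emph{only} on the final-time slice $\Omega\times\{T\}$: there $f_\eta\equiv-\infty$, so $\sup_{\Omega_T}(f_\eta-g)\le\sup_{\Omega\times[0,T)}(f-g)$, which may be $\le 0$. The slice $t=T$ belongs to $\Omega_T=\Omega\times[0,T]$ but to neither $(\Omega\setminus\Gamma)\times\,]0,T[$ nor $\partial\Omega_T=(\Gamma\times]0,T[)\cup\Omega\times\{0\}$, so Definition~\ref{def:visccauchy} places no constraint whatsoever on the values of an usc sub-solution at $t=T$; they may jump upward. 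Concretely, with $P\equiv 0$ and $\psi\equiv 0$, the function $f$ equal to $0$ for $t<T$ and to $1$ on $\Omega\times\{T\}$ is usc, satisfies $f\le\psi$ on $\partial\Omega_T$, and is a sub-solution in $(\Omega\setminus\Gamma)\times]0,T[$ (any admissible test function has vanishing derivatives at an interior touching point, and $0\le P$), while $g\equiv 0$ is a super-solution; yet $f>g$ at $t=T$. Hence your proof establishes $f\le g$ on $\Omega\times[0,T)$ only, and no proof can do better under the paper's bare definitions --- which is precisely why classical parabolic comparison theorems are stated on the half-open cylinder (e.g.\ \cite[Theorem~8.2]{cil92}). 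To obtain the closed cylinder as claimed, one needs an extra ingredient: either continuity of $f$ and $g$ up to $t=T$ (which holds in every application of Proposition~\ref{pro:PC} in the paper, and then the inequality extends by letting $t\to T^-$), or the standard convention that sub- and super-solutions satisfy the equation, in the viscosity sense, up to and including the final time. You should either weaken your conclusion to $\Omega\times[0,T)$ or add this final-time step explicitly.
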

\begin{proof}
The proof can be found in \cite[Theorem~5.1, Remark~5.1]{barles11}.
\end{proof}

It is well-known that \eqref{cauchy}, which accounts for a "Dirichlet-type" boundary condition, cannot be solved for any function $\psi$; see e.g., \cite[Section~2.6.3]{BarlesBook}. Thus, in order to construct solutions to \eqref{cauchy}, compatibility properties between the equation and the boundary conditions are necessary. This is precisely what we impose through the following assumption:\\
\fbox{\parbox{0.975\textwidth}{
\begin{enumerate}[label=({\textbf{H.\arabic*}}),start=10]
\item There exists $\psi_b\in \LIP{\Omega}$, with $\psi_b(x)=\psi(x)$ for all $x \in \Gamma$, such that $\psi_b$ is a sub-solution of \eqref{cauchy} in $\Omega_T$. \label{assum:psisssol}
\end{enumerate}}}\\
\begin{rem}\label{rem:asspsi}
To give better intuition and insight of \ref{assum:psisssol}, let's give relevant examples of problems that satisfy this assumption. First consider the simplified model, posed in $\R$ and without dependence in time:
\[
\begin{cases}
|u'|=1	&{\rm in}\; ]-1,1[\\
u(x)=Kx	&{\rm for}\; x \in \{-1,1\} ,
\end{cases}
\]
for some constant $K$. Clearly, if $|K|>1$, it is not possible to construct a solution satisfying the boundary condition in a strong sense. However, it is still possible to construct a solution satisfying the boundary condition in a weak sense (i.e. either the boundary condition or the equation is satisfied at the boundary; see \cite{BarlesBook}) but this is not what we want to do here. This is the reason why we have to impose a kind of compatibility condition between the boundary conditions and the equation. For this example, assumption \ref{assum:psisssol} is ensured as soon as $|K|\le 1$.

\medskip

Another prominent example where assumption \ref{assum:psisssol} is satisfied is when $\psi=0$ and $P\ge0$ in \eqref{cauchy}. This setting corresponds to a time-dependent Eikonal equation where the steady state solution can be interpreted as the shortest traveling time or distance of a point $x \in \Omega$ to front $\Gamma$, where the travel inverse speed is $P$. This example plays an important role for computing distance functions which is a key step in numerous applications including image processing or computational computational geometry \cite{Sethian88,Sethian96,Sethian99}.
\end{rem}


\begin{rem}\label{rem:psib}
Assumption \ref{assum:psisssol} entails in particular that 
\[
|\nabla \psi_b(x)|\le \norm{P}_{L^\infty(\Omega\setminus\Gamma)} ,
\]
and thus the Lipschitz constant $L_{\psi_b}$ satisfies 
\[
L_{\psi_b}\le \norm{P}_{L^\infty(\Omega\setminus\Gamma)} .
\]
\end{rem}

We then have the following result which gives the existence and uniqueness of viscosity solution for problem \eqref{cauchy}.
\begin{prop}[Existence and uniqueness for \eqref{cauchy}]\label{pro:existence}
Suppose that assumptions~\ref{assum:Om}--\ref{assum:regulariteOmega} and \ref{assum:psisssol} hold.
\tcb{Then, problem \eqref{cauchy} admits a unique viscosity solution $f$ (which is in fact continuous).} Moreover, there exists a function $\widebar{f}\in \LIP{\Omega_T}$, with a Lipschitz constant depending on $a_0,\; d_0,\; \Lip{\psi}$ and $\norm{P}_{L^\infty(\Omega\setminus\Gamma)}$,  such that
\begin{equation}\label{eq:barrier-f}
\psi_b \leq f \leq \widebar{f} \quad {\rm in }\;\Omega_T.
\end{equation}
\end{prop}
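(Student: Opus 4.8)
The plan is to read off uniqueness from the comparison principle and to obtain existence by Perron's method, the entire difficulty being the construction of a Lipschitz super-solution $\widebar{f}$ matching the boundary data; assumption~\ref{assum:regulariteOmega} is exactly what makes this construction possible. Uniqueness is immediate: if $f_1,f_2$ are two continuous viscosity solutions, then $f_1$ is a sub-solution and $f_2$ a super-solution, so Proposition~\ref{pro:PC} gives $f_1\le f_2$ on $\Omega_T$; swapping their roles yields $f_2\le f_1$, hence $f_1=f_2$. It thus remains to produce one continuous solution obeying \eqref{eq:barrier-f}.

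Set $C\eqdef\max\bpa{\Lip{\psi},\norm{P}_{L^\infty(\Omega\setminus\Gamma)}}$ and introduce the two global barriers
\[
\underline{f}(x,t)\eqdef\max\bpa{\psi_b(x),\,\psi(x)-Ct},\qquad \widebar{f}(x,t)\eqdef\min\bpa{\psi(x)+Ct,\,b(x)},
\]
where $b\in\LIP{\Omega}$ is the time-independent lateral barrier built below. Since $\psi\in\LIP{\Omega}$, $C\ge\Lip{\psi}$ and $0\le P\le C$, the functions $\psi(\cdot)-Ct$ and $\psi(\cdot)+Ct$ are respectively a sub- and a super-solution of \eqref{cauchy}, while $\psi_b$ is a sub-solution by~\ref{assum:psisssol}; as a maximum (resp. minimum) of sub- (resp. super-) solutions, $\underline{f}$ and $\widebar{f}$ are themselves a sub- and a super-solution. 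The construction will ensure $\psi_b\le\underline{f}\le\widebar{f}$ on $\Omega_T$ and, crucially, $\underline{f}=\widebar{f}=\psi$ on all of $\partial\Omega_T$. I then run Perron's method with
\[
f\eqdef\sup\set{u:\ \underline{f}\le u\le\widebar{f},\ u\ \text{a sub-solution of}~\eqref{cauchy}}.
\]
The standard Perron lemma gives that $f^*$ is a sub-solution and $f_*$ a super-solution in $(\Omega\setminus\Gamma)\times]0,T[$. Because $\underline{f}\le f_*\le f^*\le\widebar{f}$ with $\underline{f}=\widebar{f}=\psi$ on $\partial\Omega_T$, we obtain $f^*=f_*=\psi$ there, so $f^*$ and $f_*$ are a sub- and a super-solution on all of $\Omega_T$; Proposition~\ref{pro:PC} then forces $f^*\le f_*$, and since $f_*\le f^*$ always, $f\eqdef f^*=f_*$ is the sought continuous solution with $\psi_b\le f\le\widebar{f}$.

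The crux, and the only place using~\ref{assum:regulariteOmega}, is the lateral barrier $b$. On the tube $\NeigGama$ I would set $b(x)\eqdef\psi_b(x)+\lambda\,d(x,\Gamma)$, capped outside by a constant $M$. By~\ref{assum:regulariteOmega}, $d(\cdot,\Gamma)$ is $C^1$ on $\NeigGama\setminus\Gamma$ with $\abs{\nabla d(\cdot,\Gamma)}\ge d_0$; hence if $\varphi$ touches $b$ from below at such a point, then $\varphi-\lambda\,d(\cdot,\Gamma)$ touches the $\Lip{\psi_b}$-Lipschitz function $\psi_b$ from below, so $\abs{\nabla\varphi-\lambda\nabla d}\le\Lip{\psi_b}$ and therefore $\abs{\nabla\varphi}\ge\lambda d_0-\Lip{\psi_b}$. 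Taking $\lambda\ge\bpa{\norm{P}_{L^\infty(\Omega\setminus\Gamma)}+\Lip{\psi_b}}/d_0$ (and recalling $\Lip{\psi_b}\le\norm{P}_{L^\infty(\Omega\setminus\Gamma)}$ from Remark~\ref{rem:psib}, so that $\lambda\simeq 2\norm{P}_{L^\infty(\Omega\setminus\Gamma)}/d_0$ suffices) yields $\abs{\nabla\varphi}\ge\norm{P}_{L^\infty(\Omega\setminus\Gamma)}\ge P$, which is precisely the super-solution inequality for the $t$-independent $b$. Enlarging $\lambda$ so that also $\lambda\ge\Lip{\psi_b}+\Lip{\psi}$ gives $b\ge\psi$ on $\NeigGama$ (from $\abs{\psi_b-\psi}\le(\Lip{\psi_b}+\Lip{\psi})\,d(\cdot,\Gamma)$ and $\psi_b=\psi$ on $\Gamma$), with $b=\psi$ on $\Gamma$. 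Finally I would choose $M\ge\max_\Omega\psi+CT$ and verify, using $\diam(\Omega)<\infty$, that $b\ge\psi(\cdot)+Ct$ throughout $\set{d(\cdot,\Gamma)\ge a_0}$, so that $\widebar{f}=\psi(\cdot)+Ct$ there; this confines the use of~\ref{assum:regulariteOmega} to $\NeigGama$ and keeps $\widebar{f}$ a genuine super-solution globally.

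The resulting $\widebar{f}$, being a minimum of Lipschitz functions, lies in $\LIP{\Omega_T}$ with constant governed by $C$, $\lambda$, and the gluing scale $a_0$, i.e. controlled by $a_0,d_0,\Lip{\psi}$ and $\norm{P}_{L^\infty(\Omega\setminus\Gamma)}$, exactly as asserted in \eqref{eq:barrier-f}. I expect the main obstacle to be the rigorous verification that $b$ (hence $\widebar{f}$) is a bona fide viscosity super-solution: transferring a test function touching $b$ from below onto $\psi_b$ through the $C^1$ distance function, bounding the gradient from below via $\abs{\nabla d}\ge d_0$, and controlling the gluing across $\set{d(\cdot,\Gamma)=a_0}$ so that no super-solution defect appears at the interface. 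Once $\widebar{f}$ is available, the Perron--comparison argument above is routine.
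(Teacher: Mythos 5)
Your proposal is correct and follows essentially the same route as the paper: uniqueness and continuity from the comparison principle (Proposition~\ref{pro:PC}), existence by Perron's method between the sub-solution $\psi_b$ and a super-solution barrier of the form $\min\bpa{\psi + Ct,\ \text{Lipschitz data} + \lambda\, d(\cdot,\Gamma)}$, with \ref{assum:regulariteOmega} exploited only inside $\NeigGama$ and the constants chosen large enough that the time-linear piece is the active one outside the tube. The paper's barrier uses $\psi + K_2 d(\cdot,\Gamma)$ globally with $K_2 \geq K_1 T/a_0$ (no cap $M$ is needed), which automatically resolves the gluing concern you flag; otherwise the two arguments coincide.
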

Before giving the proof of this proposition, we first define the notion of barrier solutions and then recall Perron's method.

\begin{defn}[Barrier sub- and super-solution]\label{def:barriercauchy}
An usc function $\underline f :\Omega_T\to \R$ is a barrier sub-solution of \eqref{cauchy} if it is a viscosity sub-solution
in $(\Omega \setminus \Gamma) \times ]0,T[$ and if it satisfies moreover
\[
\lim_{y\to x, s\to t} \underline f(y,s)=\psi(x)\quad \forall (x,t)\in \Gamma \times [0,T].
\]
A lsc function $\widebar{f} :\Omega_T\to \R$ is a barrier super-solution of \eqref{cauchy} if it is a viscosity super-solution in $(\Omega \setminus \Gamma) \times ]0,T[$ and if it satisfies moreover
\[
\lim_{y\to x, s\to t} \widebar{f}(y,s)= \psi(x)\quad \forall (x,t) \in \Gamma \times [0,T].
\]
\end{defn}

\begin{thm}[Perron's method~\cite{Ishii87}]\label{perron}
Assume that there exists a barrier sub-solution $\underline f$ and a barrier super-solution $\widebar{f}$ of \eqref{cauchy}. Then there exists a (possibly discontinuous) viscosity solution $f$ of \eqref{cauchy} satisfying moreover
\[
\underline f\le f\le \widebar{f}\quad {\rm in}\; \Omega_T.
\]
\end{thm}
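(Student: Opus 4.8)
The plan is to construct $f$ by Perron's envelope method: realize it as the supremum of all subsolutions trapped between the two barriers, show that its upper semicontinuous envelope is a subsolution while its lower semicontinuous envelope is a supersolution, and finally read off the boundary condition by squeezing between the barriers. Concretely, I would set
\[
\mathcal{S} \eqdef \set{w : \Omega_T \to \R \text{ usc}, \ w \text{ subsolution of \eqref{cauchy} in } (\Omega\setminus\Gamma)\times]0,T[, \ \underline{f} \le w \le \widebar{f}}, \qquad f \eqdef \sup_{w \in \mathcal{S}} w.
\]
The comparison principle (Proposition~\ref{pro:PC}) applied to the two barriers gives $\underline{f} \le \widebar{f}$, so $\underline{f} \in \mathcal{S}$ and $\mathcal{S} \neq \emptyset$; by construction $\underline{f} \le f \le \widebar{f}$, hence $f$ is locally bounded and the displayed bound of the theorem holds. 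The barrier limits $\lim_{(y,s)\to(x,t)} \underline{f} = \lim_{(y,s)\to(x,t)} \widebar{f} = \psi(x)$ on $\Gamma \times [0,T]$, together with $\underline f \le f \le \widebar f$, force $f^*(x,t) = f_*(x,t) = \psi(x)$ there (and likewise on $\Omega \times \set{0}$), so both envelopes already satisfy the boundary inequalities of Definition~\ref{def:visccauchy}.

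Second, $f^*$ is a subsolution in the interior: this is the standard stability fact that the usc envelope of a locally bounded supremum of subsolutions is again a subsolution \cite{Ishii87,barles11}. Combined with $f^* \le \psi$ on $\partial\Omega_T$, $f^*$ is a subsolution of \eqref{cauchy} in $\Omega_T$, so Proposition~\ref{pro:PC} applied to $f^*$ and the supersolution $\widebar{f}$ yields $f^* \le \widebar{f}$. Since also $\underline{f} \le f \le f^*$, we conclude $f^* \in \mathcal{S}$, whence $f^* \le f$ and therefore $f = f^*$ is itself usc. This upgrade, made possible by the available comparison principle, streamlines the remaining step.

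Third --- and this is the main obstacle --- I would show $f_*$ is a supersolution by a bump-and-contradiction argument. Suppose it fails at some $(x_0,t_0) \in (\Omega\setminus\Gamma)\times]0,T[$: there is $\varphi \in C^1$ with $f_* - \varphi$ attaining a local minimum there, normalized to $f_*(x_0,t_0) = \varphi(x_0,t_0)$, and with the strict inequality $\ddt \varphi(x_0,t_0) + \abs{\nabla\varphi(x_0,t_0)} - P(x_0) < 0$. I first note $f_*(x_0,t_0) < \widebar{f}(x_0,t_0)$: otherwise $\widebar{f} - \varphi$ would have a local minimum $0$ at $(x_0,t_0)$ and the supersolution property of $\widebar{f}$ would give the reverse inequality, a contradiction. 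This gap leaves room to push $f$ upward. For small $\kappa, r > 0$, set $\varphi_\kappa(x,t) = \varphi(x,t) + \kappa - \pa{\abs{x-x_0}^2 + \abs{t-t_0}^2}$; by continuity $\varphi_\kappa$ keeps the strict subsolution inequality on $\ball_r(x_0) \times\, ]t_0-r,t_0+r[$, and for $\kappa$ small enough it stays $\le \widebar{f}$ there. Because $\varphi \le f_* \le f$ near $(x_0,t_0)$, the quadratic penalty forces $\varphi_\kappa < f$ once $\abs{x-x_0}^2 + \abs{t-t_0}^2 > \kappa$; hence
\[
U \eqdef \max(f, \varphi_\kappa) \ \text{ on } \ \ball_r(x_0)\times\, ]t_0-r,t_0+r[, \qquad U \eqdef f \ \text{ elsewhere,}
\]
is a well-defined usc function coinciding with $f$ off a small space-time ball, is a subsolution (a local maximum of subsolutions), and satisfies $\underline{f} \le U \le \widebar{f}$, so $U \in \mathcal{S}$. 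Finally, choosing $(y_j,s_j)\to(x_0,t_0)$ with $f(y_j,s_j)\to f_*(x_0,t_0)$ gives $\varphi_\kappa(y_j,s_j) \to f_*(x_0,t_0)+\kappa > \lim_j f(y_j,s_j)$, so $U(y_j,s_j) > f(y_j,s_j)$ for large $j$, contradicting $f = \sup \mathcal{S}$. Therefore $f_*$ is a supersolution, and with the boundary inequalities of the first step it is a supersolution of \eqref{cauchy} in $\Omega_T$.

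Combining the three steps, $f = f^*$ is a subsolution and $f_*$ a supersolution of \eqref{cauchy} in $\Omega_T$, so $f$ is a (possibly discontinuous) viscosity solution with $\underline{f} \le f \le \widebar{f}$, as claimed. I expect the genuinely delicate points to be the verification that the glued competitor $U$ remains a \emph{global} subsolution and stays below $\widebar{f}$, i.e. the careful, coupled choice of $\kappa$ and $r$ in the bump construction.
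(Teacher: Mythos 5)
The paper offers no proof of Theorem~\ref{perron}: it is quoted directly from Ishii~\cite{Ishii87}, and what you wrote out is precisely the classical Perron construction that citation points to --- the supremum over the family $\mathcal{S}$ of subsolutions trapped between the barriers, stability of suprema of subsolutions for the usc-envelope half, and the bump-and-contradiction argument for the supersolution half. Your execution of the bump step is correct: the strict inequality persists for $\varphi_\kappa$ on a small cylinder by continuity of the derivative perturbation, the coupled choice $\kappa < r^2$ guarantees $\varphi_\kappa < f$ on an open neighborhood of the cylinder's boundary (so $U=\max(f,\varphi_\kappa)$ glues to an usc global subsolution), and the sequence realizing $f_*(x_0,t_0)$ delivers the contradiction with $f=\sup\mathcal{S}$. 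One genuine deviation from the classical route is your Step~2 upgrade $f=f^*$: Ishii's argument deliberately avoids any comparison principle (Perron's method is typically run before, or independently of, comparison, which is why the classical statement produces a possibly discontinuous solution and glues with the envelope $f^*$ rather than with $f$), whereas you invoke Proposition~\ref{pro:PC} to get $f^*\in\mathcal{S}$ and hence $f$ usc. This is legitimate in the present paper, where comparison is available, and it slightly simplifies the gluing; the cost is that your proof no longer stands on its own in settings where comparison fails.

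One point where your write-up outruns the paper's definitions is the parenthetical ``and likewise on $\Omega\times\set{0}$'': Definition~\ref{def:barriercauchy} only requires the barriers to attach to $\psi$ on $\Gamma\times[0,T]$, and nothing in the stated hypotheses forces $\underline f(\cdot,0)=\psi$ on $\Omega\setminus\Gamma$. Indeed, the sub-barrier actually used in the paper, $\psi_b$, satisfies only $\psi_b\le\psi$ at $t=0$, so the squeeze gives $f_*(x,0)\ge\psi_b(x)$ but not the supersolution boundary inequality $f_*(x,0)\ge\psi(x)$. This is a looseness of the paper's formulation rather than an error of yours --- the theorem, as used, implicitly needs barriers attached on the whole parabolic boundary, which can be repaired by replacing $\underline f$ with $\max\bpa{\underline f,\,\psi - Lt}$ for $L=\norm{P}_{L^\infty(\Omega\setminus\Gamma)}+\Lip{\psi}$, a function the paper itself verifies to be a subsolution in the proof of Theorem~\ref{lip-viscosity} --- but as written your sentence does not follow from Definition~\ref{def:barriercauchy} alone, and you should either add this repair or strengthen the barrier definition before asserting the boundary inequalities on $\Omega\times\set{0}$.
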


We are now ready to prove Proposition~\ref{pro:existence}.
\begin{proof}[Proof of Proposition~\ref{pro:existence}]
By assumption \ref{assum:psisssol}, $\psi_b$ is a barrier sub-solution of \eqref{cauchy}. We then have to construct a barrier super-solution $\widebar{f}$. Existence will then be a direct consequence of Perron's method as recalled in Theorem~\ref{perron} while uniqueness and continuity will be direct consequences of the comparison principle shown in Proposition~\ref{pro:PC}.
%

Let 
\[
\widebar{f}_1(x,t)= \psi(x)+K_1t, \quad (x,t) \in \Omega_T ,
\]
where $K_1=\norm{P}_{L^\infty(\Omega\setminus\Gamma)}$, and 
\[
\widebar{f}_2(x,t)= \psi(x)+K_2 d(x,\Gamma), \quad (x,t) \in \Omega_T ,
\] 
with $K_2 > 0$ large enough to be determined later. We set 
\begin{equation}\label{eq:fmindef}
\widebar{f}(x,t) = \min(\widebar{f}_1(x,t),\widebar{f}_2(x,t))= \min(\psi(x)+K_1t,  \psi(x)+K_2d(x,\Gamma)).
\end{equation}
We claim that $\widebar{f}$ is a barrier super-solution.

First, observe that 
\[
\Lip{\widebar{f}} \leq \max\bpa{\Lip{\widebar{f}_1},\Lip{\widebar{f}_2}} \leq \Lip{\psi}+\max\pa{\norm{P}_{L^\infty(\Omega\setminus\Gamma)},K_2},
\]
since $\psi \in \LIP{\Omega}$ by \ref{assum:psi} and $\Lip{d(\cdot,\Gamma)}=1$ as $\Gamma \neq \emptyset$. In particular, $\widebar{f}$ is continuous. 

Moreover, we have for $x \in \Gamma$, 
\[
\widebar{f}_2(x,t)= \psi(x) \leq \widebar{f}_1(x,t).
\]
Hence 
\begin{equation}\label{eq:fminGamma}
\widebar{f}(x,t) =  \psi(x), \quad \forall (x,t) \in \Gamma \times [0,T],
\end{equation}
which shows, via continuity that the limit property required in Definition~\ref{def:barriercauchy} holds. It remains to prove that $\widebar{f}$ is a super-solution on $(\Omega\setminus \Gamma)\times ]0,T[$ for $K_2$ large enough. 

Observe first that by taking $K_2 \geq K_1T/a_0$, we have for all $x \in \Omega \setminus \NeigGama$ (recall that $\NeigGama$ is defined in assumption \ref{assum:regulariteOmega}),
\[
\widebar{f}_2(x,t) \ge  \psi(x) + K_2 a_0 \ge \psi(x) + K_1 T  \ge \widebar{f}_1(x,t) ,
\]
and thus \eqref{eq:fmindef} becomes
\begin{equation}\label{eq:fcases}
\widebar{f}(x,t) = 
\begin{cases}
\min(\widebar{f}_1(x,t),\widebar{f}_2(x,t)) & \text{ if } (x,t) \in \NeigGama \times [0,T], \\
\widebar{f}_1(x,t) & \text{ if } (x,t) \in \Omega \setminus \NeigGama \times [0,T] .
\end{cases}
\end{equation}
Following Definition~\ref{def:visccauchy}, let $\varphi\in C^1((\Omega\setminus \Gamma)\times ]0,T[)$ such that $\widebar{f}-\varphi$ reaches a local minimum at some $(x_0,t_0)\in (\Omega \setminus \Gamma)\times ]0,T[$. This is equivalent to 
\begin{equation}\label{eq:fineqmin}
\widebar{f}(y,s)-\varphi(y,s) \geq \widebar{f}(x_0,t_0)-\varphi(x_0,t_0) ,
\end{equation}
for all $(y,s) \in (\Omega\setminus \Gamma)\times ]0,T[$ sufficiently close to $(x_0,t_0)$. We now distinguish different cases.

\begin{enumerate}[label={\bf Case~\arabic*}]
\item~$x_0 \in \Omega \setminus \NeigGama$. \label{case:f1}
In this case, since $(\Omega \setminus \NeigGama) \subset (\Omega \setminus \Gamma)$, it follows from \eqref{eq:fcases} and \eqref{eq:fineqmin} that
\[
\widebar{f}_1(y,s)-\varphi(y,s) \geq \widebar{f}(y,s)-\varphi(y,s) \geq \widebar{f}_1(x_0,t_0)-\varphi(x_0,t_0) ,
\]
for all $(y,s) \in (\Omega\setminus \Gamma)\times ]0,T[$ sufficiently close to $(x_0,t_0)$. As $]0,T[$ is open, we take $y=x_0$ and $s=t_0+h \in ]0,T[$ for $h>0$ sufficiently small, which gives us
\begin{equation}\label{eq:phif1ineq}
\varphi(x_0,t_0+h)-\varphi(x_0,t_0) \leq \widebar{f}_1(x_0,t_0+ h)-\widebar{f}_1(x_0,t_0)=K_1h.
\end{equation}
Dividing by $h$ and passing to the limit as $h \to 0^+$, we get 
\begin{equation}\label{eq:dphif1ubnd}
\ddt\varphi (x_0,t_0) \leq K_1 .
\end{equation}
Embarking from \eqref{eq:phif1ineq} where we replace $h$ by $-h$ yields 
\begin{equation}\label{eq:dphif1lbnd}
\ddt\varphi(x_0,t_0) \geq K_1, 
\end{equation}
and thus
\begin{equation}\label{eq:dphif1}
\ddt\varphi(x_0,t_0)= K_1 .
\end{equation}
We then deduce that\footnote{Actually, only the lower bound inequality \eqref{eq:dphif1lbnd} is needed here.}
\begin{equation}\label{eq:superresineq1}
\ddt\varphi(x_0,t_0)+|\nabla \varphi(x_0,t_0)|-P(x_0) \ge K_1-P(x_0)\ge K_1 - \norm{P}_{L^\infty(\Omega\setminus\Gamma)} = 0,
\end{equation}
which shows  the desired inequality in this case\footnote{In fact, this portion of the proof shows that $\widebar{f}_1$ is a super-solution of \eqref{cauchy} in $(\Omega \setminus \Gamma)\times ]0,T[$.}.

\item~$x_0 \in \NeigGama \setminus \Gamma$. \label{case:f2}
Let $I_0 \eqdef \set{i \in \set{1,2}:~ \widebar{f}(x_0,t_0) = \widebar{f}_i(x_0,t_0)}$. Thus, for any $i_0 \in I_0$, we have from \eqref{eq:fineqmin} that
\begin{equation}\label{eq:fi0ineqmin}
\widebar{f}_{i_0}(y,s)-\varphi(y,s) \geq \widebar{f}(y,s)-\varphi(y,s) \geq \widebar{f}(x_0,t_0)-\varphi(x_0,t_0) = \widebar{f}_{i_0}(x_0,t_0)-\varphi(x_0,t_0) 
\end{equation}
for all $y \in \NeigGama \setminus \Gamma$ close enough to $x_0$. If $1 \in I_0$ then we are done thanks to \ref{case:f1}. It remains to consider the case where $I_0 = \set{2}$. Embarking from \eqref{eq:fi0ineqmin} with $i_0=2$, and arguing as we have done for $\widebar{f}_1$ in \ref{case:f1} to show \eqref{eq:dphif1}, and using that $\widebar{f}_2$ is actually $t$-independent, we get in this case that 
\begin{equation}\label{eq:dphif2}
\ddt\varphi(x_0,t_0)= 0 .
\end{equation}
On the other hand, since $\NeigGama \setminus \Gamma$ is open by \ref{assum:Gam} and \ref{assum:regulariteOmega}, we have $y=x_0+hz \in \NeigGama \setminus \Gamma$ for $h>0$ small enough and any $z \in \R^m$ such that $|z|=1$. Thus, in view of \ref{assum:regulariteOmega}, inequality \eqref{eq:fi0ineqmin} becomes
\[
\frac{(\varphi(\cdot,t_0)-K_2d(\cdot,\Gamma))(x_0+hz)-(\varphi(\cdot,t_0)-K_2d(\cdot,\Gamma))(x_0)}{h} \ge \frac{\psi(x_0+hz)-\psi(x_0)}{h}\ge -\Lip{\psi}.
\]
Passing to the limit as $h \to 0^+$, we get
\[
\dotp{\nabla \varphi(x_0,t_0)-K_2\nabla d(x_0,\Gamma)}{z} \ge -\Lip{\psi} .
\]
If $\nabla \varphi(x_0,t_0)-K_2\nabla d(x_0,\Gamma) = 0$, we have from \eqref{eq:dphif2} and \ref{assum:regulariteOmega} that
\begin{align}\label{eq:superresineq2a}
\ddt\varphi(x_0,t_0) + |\nabla \varphi (x_0,t_0)|-P(x_0) = K_2\nabla d(x_0,\Gamma) - P(x_0) \geq K_2d_0 - \norm{P}_{L^\infty(\Omega\setminus\Gamma)} \geq 0
\end{align}
for $K_2 \ge \norm{P}_{L^\infty(\Omega\setminus\Gamma)}/d_0$. In the case where $\nabla \varphi(x_0,t_0)-K_2\nabla d(x_0,\Gamma) \neq 0$, we choose
\[
z=\pm \frac{\nabla \varphi(x_0,t_0)-K_2 \nabla d(x_0,\Gamma)}{\abs{\nabla \varphi(x_0,t_0)-K_2 \nabla d(x_0,\Gamma)}}
\]
to arrive at
\[
|\nabla \varphi(x_0,t_0)-K_2\nabla d(x_0,\Gamma)|\le \Lip{\psi}.
\]
Combining this inequality with \eqref{eq:dphif2} and \ref{assum:regulariteOmega}, we get
\begin{align}\label{eq:superresineq2b}
\ddt\varphi(x_0,t_0)+|\nabla \varphi (x_0,t_0)|-P(x_0)\ge&  K_2|\nabla d(x_0,\Gamma)|-|\nabla \varphi(x_0,t_0)-K_2\nabla d(x_0,\Gamma)|-P(x_0) \nonumber\\
\ge&  K_2d_0-\Lip{\psi}-\norm{P}_{L^\infty(\Omega\setminus\Gamma)} \nonumber\\
\ge& 0
\end{align}
for $K_2 \ge( \Lip{\psi}+\norm{P}_{L^\infty(\Omega\setminus\Gamma)})/d_0$.
\end{enumerate}
In summary, taking $K_2 \geq \max\pa{(\Lip{\psi}+\norm{P}_{L^\infty(\Omega\setminus\Gamma)})/d_0,K_1T/a_0}$, the inequalities \eqref{eq:superresineq1}, \eqref{eq:superresineq2a} and \eqref{eq:superresineq2b} hold in each respective case, and thus the desired super-solution inequality is satisfied in all cases. We then conclude that $\widebar{f}$ is a barrier super-solution. The existence of $f$ and the bound \eqref{eq:barrier-f} are then direct consequences of Perron's method.
%
%
\end{proof}

We finish this section by a regularity result.
\begin{thm}[Regularity of the solution of \eqref{cauchy}]\label{lip-viscosity}
Suppose that assumptions~\ref{assum:Om}--\ref{assum:regulariteOmega} and \ref{assum:psisssol} hold. Then the unique viscosity solution to the problem \eqref{cauchy}  satisfies the following regularity properties
\begin{align}
f(x,\cdot) &\in \LIP{[0,T[} &\qwithq \Lip{f(x,\cdot)} &\leq \norm{P}_{L^\infty(\Omega\setminus\Gamma)} + \Lip{\psi}, && \forall x \in \Omega, \label{lip-t-ct-1} \\
f(\cdot,t) &\in \LIP{\Omega} &\qwithq \Lip{f(\cdot,t)} &\leq  2\norm{P}_{L^\infty(\Omega\setminus\Gamma)}+\Lip{\psi}, && \forall t \in [0,T] . \label{eq:lipspace}
\end{align}
\end{thm}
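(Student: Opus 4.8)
The plan is to derive both estimates from the comparison principle (Proposition~\ref{pro:PC}) together with the barriers produced in Proposition~\ref{pro:existence}, proving the time estimate first and then bootstrapping the space estimate from it through the coercivity of the Hamiltonian $p \mapsto \abs{p}$.

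For the time estimate \eqref{lip-t-ct-1}, I would first establish a two-sided ``initial-layer'' bound $\psi(x) - \Lip{\psi}\,t \le f(x,t) \le \psi(x) + \norm{P}_{L^\infty(\Omega\setminus\Gamma)}\,t$. The upper inequality is immediate from \eqref{eq:barrier-f}, since $f \le \widebar{f} \le \psi(x) + \norm{P}_{L^\infty(\Omega\setminus\Gamma)}\,t$. For the lower one I would check that $(x,t) \mapsto \psi(x) - \Lip{\psi}\,t$ is a sub-solution of \eqref{cauchy}: at a point where a $C^1$ test function touches it from above, the time derivative equals $-\Lip{\psi}$, the spatial gradient lies in the super-differential of $\psi$ hence has norm $\le \Lip{\psi}$, and since $P \ge 0$ by \ref{assum:P} the sub-solution inequality holds, while the boundary inequality is clear; comparison then gives the lower bound. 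I would then propagate these bounds by a time-translation argument: for $h>0$ the function $v(x,t) \eqdef f(x,t+h)$ solves the same (autonomous-in-time) equation on $\Omega \times [0,T-h]$ with $v = \psi$ on $\Gamma$, and the initial-layer bound controls $v(\cdot,0) = f(\cdot,h)$ against $f(\cdot,0) = \psi$; comparing $v$ with $f \pm$ constant yields $-\Lip{\psi}\,h \le f(x,t+h)-f(x,t) \le \norm{P}_{L^\infty(\Omega\setminus\Gamma)}\,h$, whence \eqref{lip-t-ct-1}.

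For the space estimate \eqref{eq:lipspace}, set $L \eqdef 2\norm{P}_{L^\infty(\Omega\setminus\Gamma)} + \Lip{\psi}$. The first ingredient is an \emph{interior} gradient bound from coercivity: at an interior point where $\varphi \in C^1$ touches $f$ from above, the time-Lipschitz bound just proved forces $\abs{\ddt \varphi} \le \norm{P}_{L^\infty(\Omega\setminus\Gamma)} + \Lip{\psi}$, and the sub-solution inequality rearranges to $\abs{\nabla \varphi} \le P(x_0) - \ddt \varphi \le L$. Since the super-differential of $f(\cdot,t)$ is thus bounded by $L$ at every interior point, $f(\cdot,t)$ is $L$-Lipschitz along any segment contained in $\Omega \setminus \Gamma$. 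The second ingredient handles the boundary: for each $z \in \Gamma$ the cone $g_z(x) \eqdef \psi(z) + \max(\norm{P}_{L^\infty(\Omega\setminus\Gamma)},\Lip{\psi})\abs{x-z}$ is a time-independent super-solution of \eqref{cauchy} with $g_z \ge \psi$ on $\partial \Omega_T$ (its slope dominates both $\norm{P}_{L^\infty(\Omega\setminus\Gamma)}$ and $\Lip{\psi}$), so comparison gives $f(x,t) \le \psi(z) + \max(\norm{P}_{L^\infty(\Omega\setminus\Gamma)},\Lip{\psi})\abs{x-z} \le \psi(z) + L\abs{x-z}$; symmetrically $f(x,t) \ge \psi_b(x) \ge \psi(z) - \norm{P}_{L^\infty(\Omega\setminus\Gamma)}\abs{x-z}$ for $z \in \Gamma$, using \eqref{eq:barrier-f} and $\Lip{\psi_b} \le \norm{P}_{L^\infty(\Omega\setminus\Gamma)}$ (Remark~\ref{rem:psib}). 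To conclude for arbitrary $x,y \in \Omega$ I would split on the segment $[x,y]$: if it avoids $\Gamma$ it stays in $\Omega \setminus \Gamma$ (since $\partial \Omega \subseteq \Gamma$) and the interior bound applies directly; if it meets $\Gamma$ at some $z$, then $f(z,t) = \psi(z)$ together with the cone upper bound and the $\psi_b$ lower bound at $z$ give $\abs{f(x,t)-f(y,t)} \le L(\abs{x-z}+\abs{z-y}) = L\abs{x-y}$, which is \eqref{eq:lipspace}.

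The main obstacle is the space estimate, and specifically its interaction with the Dirichlet set $\Gamma$: a naive doubling-of-variables argument would only produce a constant depending on $\Lip{P}$ and on $T$, whereas the stated $T$- and $\Lip{P}$-free constant is a genuine consequence of coercivity, which is why I route the interior part through the pointwise gradient bound rather than through doubling. The points I expect to treat with care are (i) the rigorous passage from time-Lipschitz regularity to the bound $\abs{\ddt \varphi} \le \norm{P}_{L^\infty(\Omega\setminus\Gamma)} + \Lip{\psi}$ on test functions, and from the super-differential bound to Lipschitz continuity along interior segments; and (ii) verifying that the cones $g_z$ are admissible super-solutions so that comparison applies, which is precisely what makes the segment-splitting argument close with the correct Euclidean constant even when $\Omega$ is non-convex.
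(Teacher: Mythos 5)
Your proof is correct, and while your treatment of the time estimate \eqref{lip-t-ct-1} coincides in substance with the paper's (a two-sided initial-layer bound via sub-/super-solutions of the form $\psi \mp Ct$ and the barrier of Proposition~\ref{pro:existence}, then propagation by time translation and the comparison principle), your proof of the space estimate \eqref{eq:lipspace} takes a genuinely different route. The paper adapts the doubling argument of \cite[Theorem~8.1]{barles11}: it maximizes $\Psi_\a(x,t,y,s)=f(x,t)-f(y,s)-K|x-y|-\frac{|t-s|^2}{2\a}$ (doubling in time only, keeping the cone $K|x-y|$ as the spatial test function), rules out $x_\a\in\Gamma$ using the barrier $\psi_b$ and Remark~\ref{rem:psib}, and at interior points $x_\a\neq y_\a$ combines the sub-solution inequality with the time-Lipschitz bound \eqref{lip-t-ct-1} to force $K\le \norm{P}_{L^\infty(\Omega\setminus\Gamma)}+L$, a contradiction once $K>2\norm{P}_{L^\infty(\Omega\setminus\Gamma)}+\Lip{\psi}$. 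You instead (i) turn coercivity plus \eqref{lip-t-ct-1} into a pointwise bound $\abs{\nabla\varphi}\le L$ on spatial gradients of test functions touching $f$ from above at interior space-time points, hence Lipschitz continuity of $f(\cdot,t)$ along segments of $\Omega\setminus\Gamma$; (ii) control $\abs{f(x,t)-\psi(z)}$ for every $x\in\Omega$ and $z\in\Gamma$ by comparison with the cone super-solutions $g_z$ (correctly exploiting that the vertex lies in $\Gamma$, where the super-solution test is never performed) and with $\psi_b$ from below; and (iii) conclude by splitting an arbitrary segment at a point of $\Gamma$, using $\partial\Omega\subseteq\Gamma$ to handle non-convexity of $\Omega$. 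Both routes rest on the same two mechanisms (coercivity in the interior, compatible barriers at $\Gamma$) and give the same constant $2\norm{P}_{L^\infty(\Omega\setminus\Gamma)}+\Lip{\psi}$. What the paper's doubling buys is self-containedness: it needs only the comparison principle and treats all pairs of points at once, with no segment geometry. What your argument buys is a more elementary, local structure that avoids doubling altogether, at the price of two standard facts you rightly flag as needing care and which the paper does not prove: the reduction from space-time test functions to fixed-time spatial ones (via a $\frac{|t-t_0|^2}{2\delta}$ penalization and passage to the limit in the maximum points), and the classical lemma that a continuous viscosity sub-solution of $\abs{\nabla u}\le L$ on an open set is $L$-Lipschitz along segments contained in that set; you should include proofs or precise citations for both to make the argument complete.
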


\begin{proof}
When $x \in {\Gamma}$, \eqref{lip-t-ct-1} obviously holds. It remains to consider the case $(x,t) \in ({\Omega} \setminus {\Gamma}) \times [0,T[$. 

Let $h>0$ sufficiently small and set $l(x,t) = f(x,t+h)$  for all $(x,t) \in \Omega_T$. One then has $l(x,0) = f(x,h)$ for all $(x,t) \in (\Gamma \times ]0,T[) \cup  \Omega  \times \{ 0 \}$ and thus it is easy to verify that $l$ satisfies
\begin{equation*}
\begin{cases}
\ddt  l(x,t) = - \vert \nabla l(x,t)\vert + P(x), & (x,t) \in (\Omega \setminus \Gamma) \times ]0,T[,\\
l(x,t) = f(x,h),  & (x,t) \in (\Gamma \times ]0,T[) \cup  \Omega  \times \{ 0 \}  .
\end{cases}
\end{equation*}
This entails that $f(x)$ and $f(x,t+h)$ are solutions of the same equation \eqref{cauchy}, respectively with  initial conditions $\psi$ and $f(x,h)$. Applying again the comparison result of Proposition~\ref{pro:PC} we have
\begin{equation}\label{eq:lipfth}
|f(x,t+h) - f(x,t)| \leq |f(x,h) - \psi(x)| .
\end{equation}

To conclude, it remains to show that the right hand side of \eqref{eq:lipfth} is $O(h)$. Let us define, for  $(x,t) \in \Omega_T$,
\[
f_1(x,t) = {\psi}(x) - Lt; \qquad  f_2(x,t) = {\psi}(x) + Lt ,
\]
where $L = \norm{P}_{L^\infty(\Omega\setminus\Gamma)} + \Lip{\psi}$. 
Arguing in the same way as we have done for $\widebar{f}_1$ in the proof of Proposition~\ref{pro:existence}, we get that $f_1$ and $f_2$ are respectively a sub- and a super-solution of \eqref{cauchy}. Hence, by the comparison principle in Proposition~\ref{pro:PC}, we obtain 
\begin{eqnarray*}
 {\psi}(x) - Lt \leq f(x,t) \leq  {\psi}(x) + Lt  ,
\end{eqnarray*}
whence we get
\begin{eqnarray}\label{compar-ineq-ct-1}
|f(x,t) - \psi(x)| \leq L t ,
\end{eqnarray}
Combining \eqref{eq:lipfth} and \eqref{compar-ineq-ct-1} yields
\begin{equation*}
|f(x,t+h) - f(x,t)| \leq  L h. 
\end{equation*}

\medskip

We now turn to the space regularity bound \eqref{eq:lipspace} and adapt the argument of \cite[Theorem~8.1]{barles11}. We introduce the test-function 
\[
\Psi: (x,t,y) \in \Omega_T \times \Omega \mapsto f(x,t)-f(y,t)-K|x-y| ,
\]
and we aim at showing that $\Psi$ is negative for sufficiently large $K > 0$. When $t=0$ or $(x,y) \in \Gamma^2$, we can choose $K \ge \Lip{\psi}$ to have that \eqref{eq:lipspace} holds.

We argue by contradiction, assuming that
\[
\sup_{(x,t,y) \in \Omega_T \times \Omega} \Psi(x,t,y) > 0 .
\]
Continuity of $f$ and compactness of $\Omega_T \times \Omega$ entail that the supremum of $\Psi$ is actually a maximum attained at some point $(\bar{x},\bar{t},\bar{y}) \in \Omega_T \times \Omega$. In order to use viscosity solutions arguments, we use the classical doubling of the variable in time, and introduce the function, for $\alpha > 0$,
\[
\Psi_\a: (x,t,y,s) \in \Omega_T^2 \mapsto f(x,t)-f(y,s)-K|x-y|-\frac{|t-s|^2}{2\a} .
\]
This function has a maximum attained at some point in $\Omega_T^2$, say $(x_\a,t_\a,y_\a,s_\a)$. We obviously have 
\begin{equation}\label{eq:Psiapos}
\Psi_\a(x_\a,t_\a,y_\a,s_\a) \geq \Psi_\a(\bar{x},\bar{t},\bar{y},\bar{t}) = \Psi(\bar{x},\bar{t},\bar{y}) > 0 .
\end{equation}
Observe also that for $\alpha$ sufficiently small, we cannot have $x_\a = y_\a$ as otherwise $\Psi_\a(x_\a,t_\a,y_\a,s_\a)$ would be negative, hence contradicting \eqref{eq:Psiapos}.

If $x_\a\in \Gamma$, then $f(x_\a,t_\a)=\psi(x_\a)=\psi_b(x_\a)$. Moreover, by  Proposition~\ref{pro:existence}, $\psi_b(y_\a) \leq f(y_\a,s_\a)$. It then follows that
\begin{align*}
\Psi_\a(x_\a,t_\a,y_\a,s_\a) \le & \psi_b(x_\a)-f(y_\a,s_\a)-K|x_\a-y_\a|\\
\le& \psi_b(x_\a)-\psi_b(y_\a)-K|x_\a-y_\a|\\
\le& (L_{\psi_b}-K)|x_\a-y_\a|\\
\le& (\norm{P}_{L^\infty(\Omega\setminus\Gamma)}-K)|x_\a-y_\a| 
\end{align*}
where we used Remark~\ref{rem:psib}. Taking $K \geq \norm{P}_{L^\infty(\Omega\setminus\Gamma)}$ contradicts positivity of $\Psi_\a(x_\a,t_\a,y_\a,s_\a)$ on $\Gamma$. 

Consider in the rest the case $x_\a\in \Omega \setminus \Gamma$. Since $x_\a \neq y_\a$, the function $(x,t) \mapsto f(y_\a,s_\a)+K|x-y_\a|+\frac{|t-s_\a|^2}{2\a}$ is smooth at $(x_\a,t_\a)$ and since $f$ is sub-solution, we have
\[
\frac{t_\a-s_\a}\a + K \le P(x_\a) .
\]
On the other hand, maximality of $\Psi_\a$ at $(x_\a,t_\a,y_\a,s_\a)$ implies, for all $t\in[0,T]$
\[
f(x_\a,t)-\frac{|t-s_\a|^2}{2\a} \leq f(x_\a,t_\a)-\frac{|t_\a-s_\a|^2}{2\a}.
\]
Choosing $t$ such that $t_\a-s_\a$ and $t_\a-t$ are of same sign, and using \eqref{lip-t-ct-1}, we get
\begin{align*}
L|t-t_\a|\ge & f(x_\a,t_\a)-f(x_\a,t)\\
\ge & \frac{|t_\a-s_\a|^2}{2\a}-\frac{|t-s_\a|^2}{2\a}\\
=&-\frac{|t-t_\a|^2}{2\a}+|t-t_\a|\frac{|t_\a-s_\a|}\a.
\end{align*}
Dividing by $|t-t_\a|$ and taking $t\to t_\a$, we get
\[
\frac{|t_\a-s_\a|}{\a} \leq L .
\]
Hence,
\[
K \leq \norm{P}_{L^\infty(\Omega\setminus\Gamma)}+L .
\]
Choosing $K > \norm{P}_{L^\infty(\Omega\setminus\Gamma)}+L$ we get again a contradiction of the positivity of $\Psi_\a(x_\a,t_\a,y_\a,s_\a)$ on $\Omega \setminus \Gamma$. The above proof shows then that 
\[
f(x,t)-f(y,t)-K|x-y| \leq 0
\] 
fo all $(x,y,t) \in \Omega^2 \times [0,T]$ and every $K>2\norm{P}_{L^\infty(\Omega\setminus\Gamma)}+L_{\psi}$, i.e., $f(\cdot,t)$ is globally Lipschitz continuous uniformly in $t$, hence providing the bound \eqref{eq:lipspace}.
%
\end{proof}

%

\subsection{Problem~\eqref{cauchy-J}}

We begin by the definition of viscosity solution for problem \eqref{cauchy-J}.
\begin{defn}[Viscosity solution for \eqref{cauchy-J}]
An usc function $f^\e : \tilde{\Omega}_T \to \R$ is a viscosity sub-solution of \eqref{cauchy-J} in $(\tilde \Omega \setminus \tilde{\Gamma})\times ]0,T[$ if for any $(u_0,t_0) \in (\tilde{\Omega} \setminus \tilde{\Gamma}) \times ]0,T[$ and $\varphi \in C^1(]0,T[)$ such that $f^\e(u_0,\cdot)-\varphi$ attains a local maximum point at $t_0 \in ]0,T[$, one has
\begin{eqnarray*}
\ddt \varphi(t_0) \leq - \babs{\nabla_{J_\e}^- f^\e(u_0,t)}_\infty + \tilde{P}(u_0).
\end{eqnarray*}
The function $f^\e$ is a viscosity sub-solution of \eqref{cauchy-J} in $\tilde \Omega_T$ if it satisfies moreover $f^\e(u,t)\le \tilde \psi(u)$ for all $(u,t) \in \partial \tilde{\Omega}_T$.

A lsc function $f^\e: \tilde{\Omega}_T \to \R$  is a viscosity super-solution of \eqref{cauchy-J} in $(\tilde \Omega \setminus \tilde{\Gamma})\times ]0,T[$ if for any $(u_0,t_0) \in (\tilde{\Omega} \setminus \tilde{\Gamma}) \times ]0,T[$ and $\varphi \in C^1(]0,T[)$ such that $f^\e(u_0,\cdot)-\varphi$ attains a local minimum point at $t_0$, one has
\begin{eqnarray*}
\ddt \varphi(t_0) \geq - \babs{\nabla_{J_\e}^- f^\e(u_0,t)}_\infty + \tilde{P}(u_0).
\end{eqnarray*}
The function $f^\e$ is a viscosity super-solution of \eqref{cauchy-J} in $\tilde \Omega_T$ if it satisfies moreover $f^\e(u,t)\ge \tilde \psi(u)$ for all $(u,t) \in \partial \tilde{\Omega}_T$.

Finally, a locally bounded function $f^\e : \tilde{\Omega}_T \to \R$ is a viscosity solution of \eqref{cauchy-J} in $(\tilde \Omega \setminus \tilde  \Gamma) \times ]0,T[$ (resp. in $\tilde \Omega_T$) if  $(f^\e)^*$ is a viscosity sub-solution and $(f^\e)_*$ is a viscosity super-solution of \eqref{cauchy-J} in $(\tilde \Omega \setminus \tilde{\Gamma}) \times ]0,T[$ (resp. in $\tilde \Omega_T$).
\end{defn}

We define barrier sub-solution and super-solution of \eqref{cauchy-J} in a similar way as we have done for the local case in Definition~\ref{def:barriercauchy}, just replacing by the non-local notion of viscosity sub- and super-solution defined above. 


We start by providing a comparison result for problem \eqref{cauchy-J}.
\begin{prop}[Comparison principle for \eqref{cauchy-J}]\label{lem-comparison-cont}
Suppose that assumptions~\ref{assum:Om}--\ref{assum:Gam} and \ref{assum:gpos} hold. Assume that $f^\e$ (resp. $g^\e$) is a bounded viscosity sub- (resp. super-) solution of \eqref{cauchy-J}. Then
\[
f^\e \le g^\e \quad {\rm in}\; \tilde \Omega_T.
\]
\end{prop}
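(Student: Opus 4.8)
The plan is to argue by contradiction and run the standard doubling-of-variables scheme, but \emph{only in the time variable}: since $\tilde{\Omega}$ is finite by \ref{assum:Om}, the spatial variable already lives in a finite set and needs no regularisation, so the whole difficulty sits in the purely temporal differential part of \eqref{cauchy-J} and no spatial doubling is required. Suppose $M \eqdef \sup_{\tilde{\Omega}_T}(f^\e - g^\e) > 0$. For $\a,\eta>0$ I would introduce
\[
\Phi(u,t,s) = f^\e(u,t) - g^\e(u,s) - \frac{(t-s)^2}{2\a} - \frac{\eta}{T-t}
\]
on $\tilde{\Omega}\times[0,T[\times[0,T]$. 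The function $\Phi$ is usc (as $f^\e$ is usc and $-g^\e$ is usc), and the term $\eta/(T-t)$ sends $\Phi\to-\infty$ as $t\to T$; this is the usual device to cope with the fact that $t=T$ lies neither in the interior nor in $\partial\tilde{\Omega}_T$. Hence the supremum of $\Phi$ is attained at some $(u_\a,t_\a,s_\a)$ with $t_\a<T$.

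Next I would record the classical penalisation estimates: boundedness of $f^\e,g^\e$ gives $\tfrac{(t_\a-s_\a)^2}{2\a}\to0$ and $|t_\a-s_\a|\to0$ as $\a\to0$, while $\sup\Phi\ge M-O(\eta)>0$ for $\eta$ small (a short preliminary step reduces $M$ to times $<T$ if needed), and $T-t_\a\ge\delta_\eta>0$, so that $s_\a<T$ for $\a$ small. I would then show that, for $\a,\eta$ small, the maximiser is in the genuine interior, i.e. $u_\a\in\tilde{\Omega}\setminus\tilde{\Gamma}$ and $t_\a,s_\a\in\,]0,T[$. Indeed, on $\partial\tilde{\Omega}_T$ one has $f^\e\le\tilde{\psi}\le g^\e$; so if $u_\a\in\tilde{\Gamma}$ (with $t_\a,s_\a>0$), or if $t_\a=0$ or $s_\a=0$, then these Dirichlet inequalities, combined with the semicontinuity of $f^\e,g^\e$ and the closeness $|t_\a-s_\a|\to0$, force the value of $\Phi$ at the maximiser to be nonpositive up to a vanishing error, contradicting $\sup\Phi>0$.

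On the interior I would use the viscosity inequalities with the test functions read off from $\Phi$: freezing $(u,s)=(u_\a,s_\a)$, the map $t\mapsto f^\e(u_\a,t)$ is tested from above by $\varphi(t)=\tfrac{(t-s_\a)^2}{2\a}+\tfrac{\eta}{T-t}+\mathrm{const}$, and freezing $(u,t)=(u_\a,t_\a)$, the map $s\mapsto g^\e(u_\a,s)$ is tested from below. Subtracting the sub- and super-solution inequalities, the common terms $\tfrac{t_\a-s_\a}{\a}$ and $\tilde{P}(u_\a)$ cancel, leaving
\[
\frac{\eta}{(T-t_\a)^2} \le \abs{\nabla_{J_\e}^- g^\e(u_\a,s_\a)}_\infty - \abs{\nabla_{J_\e}^- f^\e(u_\a,t_\a)}_\infty .
\]
The crux is to show the right-hand side is nonpositive. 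Let $v^*\in\tilde{\Omega}$ realise the maximum defining $\abs{\nabla_{J_\e}^- g^\e(u_\a,s_\a)}_\infty = J_\e(u_\a,v^*)\bpa{g^\e(u_\a,s_\a)-g^\e(v^*,s_\a)}$; bounding $\abs{\nabla_{J_\e}^- f^\e(u_\a,t_\a)}_\infty$ from below with the \emph{same} index $v^*$ and regrouping gives
\[
\abs{\nabla_{J_\e}^- g^\e(u_\a,s_\a)}_\infty - \abs{\nabla_{J_\e}^- f^\e(u_\a,t_\a)}_\infty \le J_\e(u_\a,v^*)\Big[\bpa{f^\e(v^*,t_\a)-g^\e(v^*,s_\a)}-\bpa{f^\e(u_\a,t_\a)-g^\e(u_\a,s_\a)}\Big].
\]
Because the penalisation in $\Phi$ does not depend on the spatial variable, the maximality $\Phi(u_\a,t_\a,s_\a)\ge\Phi(v^*,t_\a,s_\a)$ reads exactly $f^\e(u_\a,t_\a)-g^\e(u_\a,s_\a)\ge f^\e(v^*,t_\a)-g^\e(v^*,s_\a)$, so the bracket is nonpositive; since $J_\e\ge0$ by \ref{assum:gpos}, the right-hand side is $\le0$. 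This contradicts the strict positivity of $\eta/(T-t_\a)^2$, so $M>0$ is impossible and $f^\e\le g^\e$ on $\tilde{\Omega}_T$.

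I expect the main obstacle to be technical rather than conceptual: reducing the maximiser to the interior, namely the bookkeeping at the terminal time $t=T$ (absorbed by the $\eta/(T-t)$ penalisation) and on the parabolic boundary $\partial\tilde{\Omega}_T$, where the Dirichlet inequalities must be married with the semicontinuity of $f^\e,g^\e$ and with $|t_\a-s_\a|\to0$. By contrast, the genuinely nonlocal ingredient — comparing $\abs{\nabla_{J_\e}^- f^\e}_\infty$ with $\abs{\nabla_{J_\e}^- g^\e}_\infty$ at the doubled maximum — is what could a priori look delicate, yet it dissolves cleanly precisely because the operator is a maximum of weighted differences and the penalisation is $u$-independent; this is exactly why one never needs a spatial doubling or any nonlocal/second-order matrix inequality here.
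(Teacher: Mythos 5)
Your proposal is correct and takes essentially the same route as the paper's proof: argue by contradiction, double only the time variable with the $\eta/(T-t)$ penalization, exclude the parabolic boundary using the Dirichlet inequalities $f^\e \le \tilde\psi \le g^\e$, write the sub- and super-solution inequalities with the test functions read off from the doubled functional, and conclude because the $u$-maximality of the doubled functional (whose penalization is $u$-independent) transfers directly to the nonlocal terms thanks to $J_\e \ge 0$. The only cosmetic differences are that the paper first maximizes the undoubled functional and sends $\gamma \to 0$ (invoking a classical lemma from \cite{barles11}) to rule out boundary maximizers, whereas you rule them out at finite $\a$ via semicontinuity, and the paper multiplies the maximality inequality by $J_\e$ and takes the maximum over all $v$ rather than evaluating at a maximizing $v^*$ --- both immaterial variations.
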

\begin{proof}
We argue by contradiction and suppose that there exists some point $(z,s) \in \tilde{\Omega}_T $ such that 
\[
f^\e(z,s) - g^\e(z,s) > 0.
\]
For $\eta > 0$ sufficiently small, we introduce the function $\Psi_\eta: (u,t) \in \tilde{\Omega}_T \mapsto f^\e(u,t) - g^\e(u,s) - \frac{\eta}{T-t}$ and denote 
\[
M_{\eta} = \sup_{(u,t) \in \tilde{\Omega}_T} \Psi_\eta(u,t) . 
\]
By upper semi-continuity and compactness, $M_\eta$ is actually a maximum attained at some point on $\tilde{\Omega}_T$, say $(\tilde u^*,\tilde t^*)$. Moreover, from the positivity assumption, we have $M_{\eta} > 0$ for $\eta > 0$ small enough.

We now duplicate the time variable and consider, for $\gamma>0$, the function 
\[
\Psi_{\eta,\gamma}: (u,t,s) \in \tilde{\Omega} \times [0,T]^2 \mapsto f^\e(u,t) - g^\e(u,s) - \frac{|t-s|^2}{2\gamma} - \frac{\eta}{T-t} ,
\]
and we denote
\[
M_{\gamma,\eta} = \sup_{(u,t,s) \in \tilde{\Omega} \times [0,T]^2} \Psi_{\eta,\gamma}(u,t,s). 
\]
Again, upper semi-continuity and compactness entails that the supremum is actually a maximum which is attained at some point $(\bar{u}_{\gamma},\bar{t}_{\gamma}, \bar{s}_{\gamma}) \in \tilde{\Omega}\times[0,T]^2$. We also have for $\eta$ sufficiently small
\[
M_{\gamma,\eta} \geq \Psi_{\eta,\gamma}(\tilde u^*,\tilde t^*,\tilde t^*) = \Psi_{\eta}(\tilde u^*,\tilde t^*) = M_{\eta} > 0 .
\]

Using classical arguments (see, e.g., \cite[Lemma~5.2]{barles11}), we deduce that there exists $(u^*,t^*)\in \tilde{\Omega} \times [0,T[$ such that
\begin{equation}\label{eq:limMgeta}
\begin{cases}
\bar{u}_{\gamma} \to u^* & \text{ as } \gamma \to 0, \\
\bar{t}_{\gamma},\; \bar{s}_{\gamma} \to t^* & \text{ as } \gamma \to 0, \\
\Psi_{\eta}(u^*,t^*) = M_\eta .
\end{cases}
\end{equation}
Note that if $t^*=0$, then we would have
\[
0<M_\eta=f^\e(u^*,0)-g^\e(u^*,0)-\frac \eta {T}\le \tilde \psi(u^*)-\tilde \psi(u^*) = 0 ,
\]
which is absurd. Hence $t^*>0$ which, in view of \eqref{eq:limMgeta}, implies that $\bar{t}_{\gamma}>0$ for $\gamma$ small enough. Moreover, if $\bar{u}_{\gamma} \in \tilde{\Gamma}$, then
\[
0<M_{\gamma,\eta}\le f^\e(\bar{u}_{\gamma},\bar{t}_{\gamma})-g^\e(\bar{u}_{\gamma},\bar{s}_{\gamma})\le \tilde \psi(\bar{u}_{\gamma})-\tilde \psi(\bar{u}_{\gamma}) = 0 ,
\]
which is again absurd. Hence $\bar{u}_{\gamma} \in \tilde \Omega\setminus \tilde\Gamma$.
%

The function $t \mapsto g^\e(\bar{u}_{\gamma},\bar{s}_{\gamma})+\frac{|t-\bar{s}_{\gamma}|^2}{2\gamma} +  \frac{\eta}{T-t}$ is smooth at $\bar{t}_{\gamma} > 0$, and reaches a maximum in $\bar{t}_{\gamma}$. Since $f^\e$ is a viscosity sub-solution of \eqref{cauchy-J}, we deduce that
\[
\frac{\bar{t}_{\gamma} - \bar{s}_{\gamma}}{\gamma} + \frac \eta{T^2} +  \babs{\nabla_{J_\e}^- f^\e(\bar{u}_{\gamma},\bar{t}_{\gamma})}_\infty - \tilde{P}(\bar{u}_{\gamma}) \leq 0 .
\]
Arguing in the same way, but now on $g^\e$, and using it is a viscosity super-solution of \eqref{cauchy-J}, we get that
\[
\frac{\bar{t}_{\gamma} - \bar{s}_{\gamma}}{\gamma}  +  \babs{\nabla_{J_\e}^- g^\e(\bar{u}_{\gamma},\bar{s}_{\gamma})}_\infty - \tilde{P}(\bar{u}_{\gamma}) \geq 0.
\]
Subtracting the last two inequalities, we obtain
\begin{align}\label{est-ct}
\frac \eta {T^2}\le&  \babs{\nabla_{J_\e}^- g^\e(\bar{u}_{\gamma},\bar{s}_{\gamma})}_\infty -
\babs{\nabla_{J_\e}^- f^\e(\bar{u}_{\gamma},\bar{t}_{\gamma})}_\infty .
\end{align}
We now use the fact that $(\bar{u}_{\gamma},\bar{t}_{\gamma},\bar{s}_{\gamma})$ is a maximum point of $\Psi_{\gamma,\eta}$ defined above. This implies in particular that
\[
f^\e(v,\bar{t}_{\gamma}) - g^\e(v,\bar{s}_{\gamma}) -\frac{|\bar{t}_{\gamma}-\bar{s}_{\gamma}|^2}{2\gamma} - \frac \eta{T-\bar{t}_{\gamma}}
\leq 
f^\e(\bar{u}_{\gamma},\bar{t}_{\gamma}) - g^\e(\bar{u}_{\gamma},\bar{s}_{\gamma}) -\frac{|\bar{t}_{\gamma}-\bar{s}_{\gamma}|^2}{2\gamma}-  \frac \eta{T-\bar{t}_{\gamma}} ,
\]
whence we get
\[
g^\e(\bar{u}_{\gamma},\bar{s}_{\gamma})-g^\e(v,\bar{s}_{\gamma})\le f^\e(\bar{u}_{\gamma}, \bar{t}_{\gamma})-f^\e(v,\bar{t}_{\gamma}).
\]
Multiplying both sides of this inequality by $J_\e$, which is non-negative by \ref{assum:gpos}, taking the maximum over $v \in \tilde{\Omega}$ and recalling \eqref{eq:normnablaJe}, \eqref{est-ct} becomes
\begin{align*}
\frac \eta {T^2} \le 0
\end{align*}
leading to a contradiction. 
\end{proof}

In the same vein as for problem \eqref{cauchy}, the following assumption is intended to impose compatibility properties between \eqref{cauchy-J} and the boundary conditions on $\partial\tilde\Omega_T$:\\
\fbox{\parbox{0.975\textwidth}{
\begin{enumerate}[label=({\textbf{H.\arabic*}}),start=11]
\item There exists $\tilde \psi_b\in \LIP{\tcb\Omega}$, with $\tilde \psi_b(u)=\tilde \psi(u)$ for all $u\in \tilde{\Gamma}$, such that $\tilde \psi_b$ is a sub-solution of \eqref{cauchy-J} in $\tilde \Omega_T$. \label{assum:psisssol-J}
\end{enumerate}}}\\

\begin{rem}
We refer to Remark~\ref{rem:asspsi} for a discussion on this assumption, which is similar to the one made in the local case. For instance, \ref{assum:psisssol-J} holds when $\tilde{\psi}=0$ and $\tilde{P} \geq 0$. This example, when considered on weighted graphs (see Section~\ref{sec:eikconvgraphs}), corresponds to computing distances on discrete images, meshes, point clouds, or any data that can be represented as a weighted graph; see \cite{Toutain16,Desquesnes17} and references therein.
\end{rem}
%

We are ready to provide an existence result. As for the local case, the proof is based on Perron's method and on the construction of barriers.
\begin{prop}[Existence result for \eqref{cauchy-J}]\label{pro:existence-J}
Suppose that assumptions~\ref{assum:Om}--\ref{assum:psi}, \ref{assum:gpos}--\ref{assum:gdec} and \ref{assum:psisssol-J} hold. \tcb{Then, problem \eqref{cauchy-J} admits a unique viscosity solution $f^\e$ (which is in fact continuous).} Moreover, there exists a function $\widebar{f}^\e \in \LIP{\tilde{\Omega}}$ such that
\begin{equation}\label{eq:barrier-f-J}
\tilde \psi_b\le f^\e\le \widebar{f}^\e\quad {\rm in }\;\tilde \Omega_T.
\end{equation}
\end{prop}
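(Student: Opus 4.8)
The plan is to follow the blueprint of Proposition~\ref{pro:existence}: exhibit a barrier sub-solution and a barrier super-solution of \eqref{cauchy-J}, invoke the (non-local) Perron method to obtain a possibly discontinuous viscosity solution trapped between them, and then promote it to a unique \emph{continuous} solution via the comparison principle of Proposition~\ref{lem-comparison-cont}. The barrier sub-solution costs nothing: assumption \ref{assum:psisssol-J} asserts precisely that $\tilde\psi_b$ is a sub-solution of \eqref{cauchy-J} in $\tilde\Omega_T$ with $\tilde\psi_b=\tilde\psi$ on $\tilde\Gamma$; being a sub-solution in $\tilde\Omega_T$ it also satisfies $\tilde\psi_b\le\tilde\psi$ on all of $\partial\tilde\Omega_T$, and being time-independent it trivially matches the data in the barrier sense on $\tilde\Gamma\times[0,T]$.

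The real work is the super-solution, and here the finiteness (hence discreteness) of $\tilde\Omega$ assumed in \ref{assum:Om} is a decisive simplification: since every point of $\tilde\Omega$ is isolated, no spatial regularity of a distance function (i.e. no analogue of \ref{assum:regulariteOmega}) is needed, and one is free to prescribe the barrier independently on $\tilde\Gamma$ and on $\tilde\Omega\setminus\tilde\Gamma$. I would take
\[
\widebar{f}^\e(u,t)=\tilde\psi(u)+\norm{\tilde P}_{L^\infty(\tilde\Omega\setminus\tilde\Gamma)}\,t \ \text{ on }\ \tilde\Omega\setminus\tilde\Gamma,\qquad \widebar{f}^\e(u,t)=\tilde\psi(u) \ \text{ on }\ \tilde\Gamma .
\]
This function is Lipschitz and equals $\tilde\psi$ both on $\tilde\Gamma\times[0,T]$ and at $t=0$, so the barrier matching and the boundary inequality $\widebar{f}^\e\ge\tilde\psi$ on $\partial\tilde\Omega_T$ hold. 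For the interior super-solution inequality at $u_0\in\tilde\Omega\setminus\tilde\Gamma$, smoothness in $t$ reduces the test to $\ddt\widebar{f}^\e(u_0,t)=\norm{\tilde P}_{L^\infty(\tilde\Omega\setminus\tilde\Gamma)}$, and since $\babs{\nabla_{J_\e}^-\widebar{f}^\e(u_0,t)}_\infty\ge 0$ by \ref{assum:gpos} while $\norm{\tilde P}_{L^\infty(\tilde\Omega\setminus\tilde\Gamma)}\ge\tilde P(u_0)$, the required inequality $\ddt\widebar{f}^\e+\babs{\nabla_{J_\e}^-\widebar{f}^\e}_\infty\ge\tilde P$ follows at once, without even evaluating the non-local operator.

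With both barriers available, Perron's method produces a viscosity solution $f^\e$ of \eqref{cauchy-J} with $\tilde\psi_b\le f^\e\le\widebar{f}^\e$, which is \eqref{eq:barrier-f-J}; alternatively the two inequalities follow a posteriori by comparing $f^\e$ against $\tilde\psi_b$ and $\widebar{f}^\e$ through Proposition~\ref{lem-comparison-cont}. Continuity is then obtained in the standard way: $(f^\e)^*$ is a sub-solution and $(f^\e)_*$ a super-solution, so comparison gives $(f^\e)^*\le(f^\e)_*$, and since always $(f^\e)_*\le(f^\e)^*$ we conclude that $f^\e=(f^\e)_*=(f^\e)^*$ is continuous. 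Uniqueness is the same comparison applied to any two solutions in both directions.

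The step requiring the most care is pinning the \emph{initial} trace $f^\e(\cdot,0)=\tilde\psi$ from below, since $\tilde\psi_b$ only gives $\tilde\psi_b\le\tilde\psi$ at $t=0$ and so does not squeeze the solution tightly there. I would resolve this by adjoining the elementary sub-barrier $\underline f^\e(u,t)=\tilde\psi(u)-Ct$ on $\tilde\Omega\setminus\tilde\Gamma$ (and $\tilde\psi$ on $\tilde\Gamma$), which equals $\tilde\psi$ at $t=0$; here $C\eqdef\max_{u\in\tilde\Omega\setminus\tilde\Gamma}\babs{\nabla_{J_\e}^-\tilde\psi(u)}_\infty$ is finite precisely because $\tilde\Omega$ is finite and $J_\e$ is bounded, and since subtracting $Ct$ only decreases the internal differences one gets $\babs{\nabla_{J_\e}^-\underline f^\e(u,t)}_\infty\le\babs{\nabla_{J_\e}^-\tilde\psi(u)}_\infty\le C\le C+\tilde P(u)$, i.e. the sub-solution inequality. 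More conceptually, finiteness of $\tilde\Omega$ turns $\babs{\nabla_{J_\e}^-\cdot}_\infty$ into a finite maximum of affine maps with bounded slopes, so \eqref{cauchy-J} is really a globally Lipschitz ODE system on $\tilde\Omega\setminus\tilde\Gamma$ with data frozen on $\tilde\Gamma$; this alone yields existence and uniqueness by Cauchy--Lipschitz, and is exactly why finiteness is needed here although it was not in the local problem.
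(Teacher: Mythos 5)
Your proof is correct, and although it follows the paper's skeleton (barrier sub-solution $\tilde\psi_b$, barrier super-solution, Perron's method, then the comparison principle of Proposition~\ref{lem-comparison-cont} for uniqueness, continuity and the sandwich \eqref{eq:barrier-f-J}), your construction of the barrier super-solution is genuinely different from the paper's, and simpler. The paper transplants the local construction of Proposition~\ref{pro:existence}, taking $\widebar{f}^\e=\min\bpa{\tilde\psi+K_1t,\,\tilde\psi+K_2d(\cdot,\tilde\Gamma)}$; at interior points where the minimum is attained by the distance branch, the time derivative of the test function vanishes, so the super-solution inequality must be carried by the non-local term itself, and this is exactly where \ref{assum:gdec} (through $c_g=g(a)$ and the choice $\eta=a\e$) and the minimal inter-point distance $\tilde d_0$ enter, forcing $K_2\geq C_gc_g^{-1}\e\tilde d_0^{-1}\norm{\tilde P}_{L^\infty(\tilde\Omega\setminus\tilde\Gamma)}+\Lip{\tilde\psi}$. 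You instead exploit the fact that $\tilde\Omega$ is finite, so every point is isolated and the barrier limit property on $\tilde\Gamma\times[0,T]$ constrains only the time variable; this legitimizes your piecewise definition, the time slope $\norm{\tilde P}_{L^\infty(\tilde\Omega\setminus\tilde\Gamma)}$ alone dominates $\tilde P(u_0)$, and the non-local operator is used only through its sign, i.e., only \ref{assum:gpos} is needed for this step. What each approach buys: yours is shorter and uses weaker kernel hypotheses; both produce barriers whose spatial Lipschitz constant degenerates with $\tilde d_0$, which, as the paper's remark after the proposition notes, is harmless for the error bounds. Two further points are to your credit. First, your extra sub-barrier $\tilde\psi-Ct$ pinning the initial trace from below fills a step the paper passes over in silence: Definition~\ref{def:barriercauchy} and Theorem~\ref{perron} only require barrier matching on the lateral boundary $\tilde\Gamma\times[0,T]$, so attainment of the initial datum from below does not follow from $\tilde\psi_b$ alone; the paper constructs this very function ($f_1^\e$, with your $C$ replaced by $L$) only later, in the proof of Theorem~\ref{lip-viscosity-J}, without feeding it back into the existence argument. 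Second, your closing observation that finiteness turns \eqref{cauchy-J} into a globally Lipschitz ODE system is a valid alternative existence proof via Cauchy--Lipschitz; just note that it yields uniqueness only among classical-in-time solutions, so uniqueness in the viscosity class still rests on Proposition~\ref{lem-comparison-cont}, as you indeed invoke.
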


\begin{rem}
A close inspection of the forthcoming proof reveals that the Lipschitz constant estimate of the barrier super-solution $\widebar{f}^\e$ depends on the minimal distance between two points of $\tilde \Omega$ and can be very large. This is rather pessimistic but seems the price to pay to construct a barrier super-solution. On the other hand, and fortunately, this estimate will not enter our error bounds. 
\end{rem}

\begin{rem}
The authors of \cite{Oberman15,Desquesnes17} proved existence and uniqueness of the solution (not a viscosity one) in the special case of \eqref{eq:eikgraph}.
\end{rem}

\begin{proof}
The proof follows the same lines as the one of Proposition~\ref{pro:existence}, but adapted to the non-local setting. By assumption \ref{assum:psisssol-J}, $\tilde \psi_b$ is a barrier sub-solution of \eqref{cauchy-J}. We then have to construct a barrier super-solution. Existence will then be a direct consequence of the Perron's method while uniqueness and continuity will be direct consequences of the comparison principle provided in Proposition~\ref{lem-comparison-cont}.
%

Let
\[
\widebar{f}_1^\e(u,t)= \tilde \psi(u)+K_1t \qandq \widebar{f}_2^\e(u,t)=\tilde  \psi(u)+K_2d(u,\tilde{\Gamma}), \quad (u,t) \in \tilde{\Omega}_T ,
\]
where $K_1=\norm{\tilde{P}}_{L^\infty(\tilde{\Omega}\setminus\tilde{\Gamma})}$ and $K_2$ large enough to be determined. We then define 
\begin{equation}\label{eq:barf}
\widebar{f}^\e(u,t)=\min(\widebar{f}_1^\e(u,t),\widebar{f}_2^\e(u,t)) . 
\end{equation}
We will show that $\widebar{f}^\e$ is a barrier super-solution of \eqref{cauchy-J}. Arguing similarly to the proof of Proposition~\ref{pro:existence}, one has that $\widebar{f}^\e$ is (Lipschitz) continuous, hence lsc, and that the limit property required in Definition~\ref{def:barriercauchy} holds since for $u \in \tilde{\Gamma}$, we have 
\[
\widebar{f}_2^\e(u,t)= \tilde \psi(u)\le\widebar{f}_1^\e(u,t), \quad \forall (u,t) \in \tilde{\Gamma} \times [0,T],
\]
and thus 
\[
\widebar{f}^\e(u,t) = \tilde \psi(u), \quad \forall (u,t) \in \tilde{\Gamma} \times [0,T].
\]
It remains to show that $\widebar{f}^\e$ is a super-solution on $(\tilde{\Omega}\setminus \tilde{\Gamma})\times ]0,T[$ for $K_2$ large enough. 

Let $\NeigtGameta \eqdef \set{u \in \tilde \Omega:~d(u,\tilde{\Gamma}) \leq \eta}$, for $\eta$ small enough to be chosen shortly. Taking $K_2 \geq K_1T/\eta$, we have for any $u \in \tilde{\Omega} \setminus \NeigtGameta$ 
\[
\widebar{f}_2^\e(u,t) \geq \tilde{\psi}(x) + K_2 \eta \geq \tilde{\psi}(u) + K_1 T  \geq \widebar{f}_1^\e(u,t) .
\]
In turn, $\widebar{f}^\e=\widebar{f}_1^\e$ on $\tilde{\Omega} \setminus \NeigtGameta \times [0,T]$. Let $\varphi\in C^1(]0,T[)$ and $u_0 \in \tilde{\Omega} \setminus \tilde{\Gamma} \times [0,T]$ such that $\widebar{f}^\e(u_0,\cdot)-\varphi$ attains a local minimum at some $t_0 \in ]0,T[$.

If $u_0 \in \tilde{\Omega} \setminus \NeigtGameta$, then $\widebar{f}^\e(u_0,t_0)=\widebar{f}^\e_1(u_0,t_0)$. One easily shows following the same steps as for $\widebar{f}_1$ in the proof of Proposition~\ref{pro:existence}, that
\[
\ddt \varphi(t_0) = K_1.
\]
It then follows that
\[
\ddt \varphi(t_0) + |\nabla_{J^\e}^-\widebar{f}^\e_1(u_0,t_0)|_\infty-\tilde P(u) \geq K_1 - \tilde P(u) \geq K_1 - \norm{\tilde{P}}_{L^\infty(\tilde{\Omega}\setminus\tilde{\Gamma})}  = 0.
\]

If $u_0 \in \NeigtGameta \setminus \tilde{\Gamma}$, we have two cases. Either $\widebar{f}^\e(u_0,t_0) = \widebar{f}_1^\e(u_0,t_0)$, and we are done, or $\widebar{f}^\e(u_0,t_0) = \widebar{f}_2^\e(u_0,t_0)$. In this case, we have (see again the proof of Proposition~\ref{pro:existence}) that
\[
\ddt \varphi(t_0) = 0 ,
\]
and thus, for every $v\in \tilde \Gamma$, we have
\begin{align}
\ddt \varphi(t_0)+|\nabla_{J^\e}^-\widebar{f}^\e_2(u_0,t_0)|_\infty-\tilde P(u_0)
&\ge J_\e(u_0,v)(\widebar{f}_2^\e(u_0,t_0)-\widebar{f}_2^\e(v,t_0))-\tilde P(u_0) \nonumber \\
&=\frac{1}{\e} C_g^{-1}g\pa{\frac{|u_0-v|}{\e}}(\tilde{\psi}(u_0)+K_2d(u_0,\tilde{\Gamma})-\tilde \psi(v)) - \tilde P(u_0) \label{eq:ineqf2} .
\end{align}
Denoted by $\tilde d_0$ the minimal distance between two points of $\tilde \Omega$ (note that, since $u_0\in \NeigtGameta \setminus \tilde{\Gamma}$, $\tilde d_0\le \eta$), we get that there exists $v_0 \in \tilde{\Gamma}$ such that 
\[
d(u_0,\tilde{\Gamma})=|u_0-v_0| \in [\tilde d_0,\eta] .
\]
Since $\tilde{\Gamma} \subset \tilde{\Omega}$, and in view of Remark \ref{assum:gubnd} and \ref{assum:gdec}, we can choose $K_2 \geq C_g c_g^{-1}\e \tilde d_0^{-1} \norm{\tilde{P}}_{L^\infty(\tilde{\Omega}\setminus\tilde{\Gamma})} + \Lip{\tilde{\psi}}$ and $\eta = a\e$ (recall the definition of $a$ from assumption \ref{assum:gdec}). Then continuing from \eqref{eq:ineqf2}, and using \ref{assum:gdec}, we get
\begin{align}\label{eq:13}
\ddt \varphi(t_0)+|\nabla_{J^\e}^-\widebar{f}^\e_2(u_0,t_0)|_\infty-\tilde P(u_0)
&\geq C_g^{-1}\frac{|u_0-v_0|}{\e}g\pa{\frac{|u_0-v_0|}{\e}}(K_2-\Lip{\tilde{\psi}}) - \tilde{P}(u_0)\\ \nonumber
&\geq C_g^{-1}\frac{\tilde d_0}{\e} g(a)(K_2-\Lip{\tilde{\psi}}) - \norm{\tilde{P}}_{L^\infty(\tilde{\Omega}\setminus\tilde{\Gamma})}\\ \nonumber
&= C_g^{-1} c_g \frac{\tilde d_0}{\e} (K_2-\Lip{\tilde{\psi}}) - \norm{\tilde{P}}_{L^\infty(\tilde{\Omega}\setminus\tilde{\Gamma})} \\
&\geq 0 .\nonumber
\end{align}

To summarize, taking $\eta =a\e$ and $K_2 \geq \max\bpa{C_g c_g^{-1} \e \tilde d_0^{-1} \norm{\tilde{P}}_{L^\infty(\tilde{\Omega}\setminus\tilde{\Gamma})} + L_{\tilde \psi},K_1T/\eta}$, we conclude that the desired super-solution inequality is satisfied in all cases. We then conclude that $\widebar{f}^\e$ is indeed a barrier super-solution as claimed. Existence and uniqueness then follow from Perron's method and the comparison principle.

%
%
\end{proof}

We now establish regularity properties for the solution of \eqref{cauchy-J}.
\begin{thm}\label{lip-viscosity-J}
Suppose that assumptions~\ref{assum:Om}--\ref{assum:psi}, \ref{assum:gpos}--\ref{assum:gdec} and \ref{assum:psisssol-J} hold. Let $f^\e$ be the bounded continuous viscosity solution of \eqref{cauchy-J}. Then 
\begin{eqnarray}
f^\e(u,\cdot) \in \LIP{[0,T[} \qwithq \Lip{f^\e(u,\cdot)} \leq L, \quad \forall u \in \tilde{\Omega}, \label{lip-t-ct}
\end{eqnarray}
where 
\[
L = \Lip{\tilde{\psi}} + \norm{\tilde{P}}_{L^\infty(\tilde{\Omega}\setminus\tilde{\Gamma})} .
\]
Moreover, for all $(u,v) \in \tilde{\Omega}^2$ and $t \in [0,T[$ such that $|u-v| \leq a\e$, where $a$ is defined in \ref{assum:gdec}, we have
\begin{eqnarray}
\abs{f^\e(u,t) - f^\e(v,t)} \leq c_g^{-1}C_g \bpa{L + \norm{\tilde{P}}_{L^\infty(\tilde{\Omega}\setminus\tilde{\Gamma})}}\e .
\label{lip-t-space}
\end{eqnarray}
Assume also that for $(u,v)\in \tilde \Omega^2$, there exists $k(\e)\in\N$ and a path $(u_1=u,u_2,\dots, u_{k(\e)}=v)$ with $|u_{i+1}-u_i| \le a \e$, $i=1,\dots,k(\e)-1$.
Then for all $t \in [0,T[$, we have
\begin{eqnarray}
\abs{f^\e(u,t) - f^\e(v,t)} \leq  c_g^{-1}C_g\bpa{L + \norm{\tilde{P}}_{L^\infty(\tilde{\Omega}\setminus\tilde{\Gamma})}} k(\e)\e.
\label{eq:globlip-space-J-1}
\end{eqnarray}
\end{thm}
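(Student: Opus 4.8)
The plan is to prove the three conclusions in the order stated, since \eqref{lip-t-space} rests on \eqref{lip-t-ct} and \eqref{eq:globlip-space-J-1} is a chaining of \eqref{lip-t-space}. For the time regularity \eqref{lip-t-ct} I would transcribe the argument used for \eqref{lip-t-ct-1} in Theorem~\ref{lip-viscosity}, replacing Proposition~\ref{pro:PC} by the non-local comparison principle of Proposition~\ref{lem-comparison-cont}. First I introduce the affine-in-time functions $f_1^\e(u,t)=\tilde{\psi}(u)-Lt$ and $f_2^\e(u,t)=\tilde{\psi}(u)+Lt$ and check they are respectively a sub- and a super-solution of \eqref{cauchy-J}. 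The crucial computation is that for every $v$, using \ref{assum:glip} and $C_g=\sup_t tg(t)$ from Remark~\ref{assum:gubnd},
\[
J_\e(u,v)\bpa{\tilde{\psi}(u)-\tilde{\psi}(v)} \leq J_\e(u,v)\Lip{\tilde{\psi}}\abs{u-v} = C_g^{-1}\Lip{\tilde{\psi}}\frac{\abs{u-v}}{\e}g\pa{\frac{\abs{u-v}}{\e}} \leq \Lip{\tilde{\psi}},
\]
so that $\babs{\nabla_{J_\e}^- f_i^\e(u,t)}_\infty \le \Lip{\tilde{\psi}} \le L$; combined with $\tilde{P}\ge 0$ and $\norm{\tilde{P}}_{L^\infty(\tilde{\Omega}\setminus\tilde{\Gamma})}\le L$, the sub/super-solution inequalities follow, and the boundary constraints hold since $f_1^\e\le\tilde{\psi}\le f_2^\e$ on $\partial\tilde{\Omega}_T$. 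Comparison then gives $\abs{f^\e(u,t)-\tilde{\psi}(u)}\le Lt$. To upgrade this to a Lipschitz bound at arbitrary $t$, I use the shift $l(u,t)=f^\e(u,t+h)$: because the Dirichlet datum on $\tilde{\Gamma}$ is time-independent, $l$ and $f^\e$ coincide on $\tilde{\Gamma}\times]0,T[$ and differ by at most $Lh$ on $\tilde{\Omega}\times\set{0}$, and since adding a constant preserves solutions of \eqref{cauchy-J}, comparing $l$ with $f^\e\pm Lh$ yields $\abs{f^\e(u,t+h)-f^\e(u,t)}\le Lh$.

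For the local spatial bound \eqref{lip-t-space}, the central observation is that the equation itself controls the non-local gradient. Fixing $u\in\tilde{\Omega}\setminus\tilde{\Gamma}$, the map $t\mapsto f^\e(u,t)$ is Lipschitz by \eqref{lip-t-ct}, hence differentiable a.e.; at a differentiability point the viscosity sub-solution inequality reduces to the classical one (a $C^1$ test function can be built tangent to $f^\e(u,\cdot)$ there), so $\ddt f^\e(u,t)\le-\babs{\nabla_{J_\e}^- f^\e(u,t)}_\infty+\tilde{P}(u)$, and using $\ddt f^\e(u,t)\ge -L$ from \eqref{lip-t-ct} gives $\babs{\nabla_{J_\e}^- f^\e(u,t)}_\infty \le \norm{\tilde{P}}_{L^\infty(\tilde{\Omega}\setminus\tilde{\Gamma})}+L$. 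Selecting the single competitor $v$ in the maximum \eqref{eq:normnablaJe} yields $J_\e(u,v)\bpa{f^\e(u,t)-f^\e(v,t)}\le \norm{\tilde{P}}_{L^\infty(\tilde{\Omega}\setminus\tilde{\Gamma})}+L$, and continuity in $t$ removes the a.e.\ restriction. Since $\abs{u-v}\le a\e$ and $g$ is decreasing on $[0,a]$ by \ref{assum:gdec}, one has $J_\e(u,v)=(\e C_g)^{-1}g(\abs{u-v}/\e)\ge (\e C_g)^{-1}c_g$, which after rearranging gives the one-sided bound $f^\e(u,t)-f^\e(v,t)\le c_g^{-1}C_g\bpa{L+\norm{\tilde{P}}_{L^\infty(\tilde{\Omega}\setminus\tilde{\Gamma})}}\e$.

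Exchanging the roles of $u$ and $v$ (legitimate since $J_\e$ is symmetric and the equation then holds at the other endpoint) converts this into the absolute value whenever both endpoints are interior. When both lie in $\tilde{\Gamma}$, I bound directly $\abs{f^\e(u,t)-f^\e(v,t)}=\abs{\tilde{\psi}(u)-\tilde{\psi}(v)}\le \Lip{\tilde{\psi}}a\e$ and observe that $C_g\ge a c_g$ forces $\Lip{\tilde{\psi}}a\le c_g^{-1}C_g\Lip{\tilde{\psi}}\le c_g^{-1}C_g\bpa{L+\norm{\tilde{P}}_{L^\infty(\tilde{\Omega}\setminus\tilde{\Gamma})}}$, so the stated constant is preserved; the remaining mixed case is closed by combining the one-sided equation bound at the interior endpoint with the lower barrier $\tilde{\psi}_b\le f^\e$ from Proposition~\ref{pro:existence-J}. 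Finally, \eqref{eq:globlip-space-J-1} is immediate: all points $u_i$ of the path lie in $\tilde{\Omega}$ and satisfy $\abs{u_{i+1}-u_i}\le a\e$, so the triangle inequality and \eqref{lip-t-space} on each consecutive pair give
\[
\abs{f^\e(u,t)-f^\e(v,t)} \leq \sum_{i=1}^{k(\e)-1}\abs{f^\e(u_i,t)-f^\e(u_{i+1},t)} \leq k(\e)\, c_g^{-1}C_g\bpa{L+\norm{\tilde{P}}_{L^\infty(\tilde{\Omega}\setminus\tilde{\Gamma})}}\e.
\]

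I expect the main obstacle to be the rigorous justification underpinning \eqref{lip-t-space}: passing from the viscosity inequalities to the pointwise identity at differentiability points (that a Lipschitz viscosity solution obeys the equation a.e.), and, more delicately, treating endpoints on $\tilde{\Gamma}$ where the equation is unavailable. The equation supplies only the downward one-sided control at each interior point, so the opposite inequality near the boundary must be recovered from the barrier $\tilde{\psi}_b$ and from the inequality $C_g\ge a c_g$ to keep the constant exactly $c_g^{-1}C_g\bpa{L+\norm{\tilde{P}}_{L^\infty(\tilde{\Omega}\setminus\tilde{\Gamma})}}$ rather than one depending on $\Lip{\tilde{\psi}_b}$.
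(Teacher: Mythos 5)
Your proposal is correct and follows essentially the same route as the paper: the same barriers $\tilde{\psi}(u)\mp Lt$ plus a double application of the comparison principle of Proposition~\ref{lem-comparison-cont} (via the time shift $f^\e(\cdot,\cdot+h)$) for \eqref{lip-t-ct}, the same observation that the equation at a point of time-differentiability yields $\babs{\nabla_{J_\e}^- f^\e(u,t)}_\infty \le L+\norm{\tilde{P}}_{L^\infty(\tilde{\Omega}\setminus\tilde{\Gamma})}$ combined with $J_\e(u,v)\ge (\e C_g)^{-1}c_g$ for $|u-v|\le a\e$ by \ref{assum:gdec} for \eqref{lip-t-space}, and the same chaining along the path for \eqref{eq:globlip-space-J-1}. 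Your handling of the boundary endpoints (the $C_g\ge a c_g$ bookkeeping on $\tilde{\Gamma}\times\tilde{\Gamma}$ and the barrier $\tilde{\psi}_b\le f^\e$ from Proposition~\ref{pro:existence-J} for the mixed case) is a slightly more explicit version of the paper's brisk boundary argument via the Lipschitz bound on $\tilde{\psi}_b$, and the difficulty you flag about the constant there is present, and equally glossed, in the paper's own proof.
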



\begin{proof}
For $u \in \tilde{\Gamma}$, \eqref{lip-t-ct} trivially holds. We consider hereafter $u \in \tilde{\Omega} \setminus \tilde{\Gamma}$, and we first show that for any $t \in [0,T[$,
\begin{eqnarray}\label{compar-ineq-ct}
|f^\e(u,t) - f^\e(u,0)| \leq L t.
\end{eqnarray}
We define for $(u,t) \in \tilde{\Omega}_T$
\[
f_1^\e(u,t) = \tilde{\psi}(u) - Lt; \qquad  f_2^\e(u,t) = \tilde{\psi}(u) + Lt.
\]
We claim that $f_1^\e$ (resp. $f_2^\e$) is a sub-solution (resp. super-solution) of \eqref{cauchy-J}. Since $f_1^\e$ and $f_2^\e$ are smooth in time, it's enough to prove it pointwise.

We have $f_1^\e \leq \tilde{\psi}$ on $\partial\tilde{\Omega}_T$, and for all $(u,t) \in (\tilde{\Omega} \setminus \tilde{\Gamma}) \times ]0,T[$, 
\begin{equation}\label{sub-2-ct}
\begin{aligned}
\ddt f_1^\e(u,t)+\babs{\nabla_{J_\e}^- f_1^\e(u,t)}_\infty - \tilde{P}(u)
&=-L+\max_{v\in\tilde{\Omega}} (\e C_g)^{-1}g\pa{\frac{|u-v|}{\e}}  (\tilde{\psi}(u) - \tilde{\psi}(v)) - \tilde{P}(u) \\
&\leq -L+L_{\tilde{\psi}}\max_{v\in\tilde{\Omega}} C_g^{-1}\frac{|u-v|}{\e} g\pa{\frac{|u-v|}{\e}}   + \normL{\tilde{P}}_{L^\infty(\tilde{\Omega} \setminus \tilde{\Gamma})} \\
&\leq 0 ,
\end{aligned}
\end{equation}
where we used \ref{assum:psi} in the first inequality and Remark \ref{assum:gubnd} in the last one. Therefore, this shows our claim on $f_1^\e$. A similar argument shows also that $f_2^\e$ is a super-solution of \eqref{cauchy-J}.

Now, for any $(u,t) \in \partial \tilde{\Omega}_T$, we have
\begin{eqnarray*}
f_1^\e(u,t) \leq \tilde{\psi}(u)=f^\e (u,t) \leq f_2^\e(u,t).
\end{eqnarray*}
Hence, since $f_1^\e$ and $f_2^\e$ are bounded and continuous (by assumption on $\tilde{\psi}$), and so is $f^\e$, applying the comparison principle of Proposition~\ref{lem-comparison-cont} twice yields that for any $(u,t) \in \tilde{ \Omega} \times [0,T[$,
\begin{eqnarray*}
f^\e(u,0) - Lt = \tilde{\psi}(u) - Lt \leq f^\e(u,t) \leq  \tilde{\psi}(u) + Lt = f^\e(u,0) + Lt ,
\end{eqnarray*}
which shows \eqref{compar-ineq-ct}.
We now apply this estimate to prove \eqref{lip-t-ct}. Let $h>0$ sufficiently small. We have that $f^\e$ is a solution of \eqref{cauchy-J} with initial condition $f^\e(\cdot,0)$ and $f^\e(\cdot,\cdot+h)$ is also a solution of \eqref{cauchy-J} with initial condition $f^\e(\cdot,h)$. Applying again the comparison principle of Proposition~\ref{lem-comparison-cont} and using \eqref{compar-ineq-ct}, we obtain for any $(u,t) \in \tilde{\Omega} \times [0,T[$,
\begin{eqnarray*}
|f^\e(u,t+h) - f^\e(u,t)| 
&\leq& |f^\e(u,h) - f^\e(u,0)|\\
&\leq& L h. 
\end{eqnarray*}
Passing to the limit as $h \to 0$ yields the desired time regularity claim.

Let us turn to the space regularity estimate \eqref{lip-t-space}. Let $(u,t) \in \tilde \Omega_T$. If $u\in \partial \tilde \Omega_t$, then
\[
f^\e(u,t)-f^\e(v,t)\le \psi_b(u)-\psi_b(v)\le L_{\psi_b}|u-v|
\]
and \eqref{lip-t-space} holds. Assume now that $(u,t)\in (\tilde{\Omega}\setminus\tilde{\Gamma}) \times ]0,T[$ is such that $f^\e$ is differentiable in time at $(u,t)$. For such points, we have from \eqref{cauchy-J} and \eqref{lip-t-ct} that
\[
\abs{\nabla_{J_\e}^- f^\e(u,t)}_\infty \leq  L+\norm{\tilde{P}}_{L^\infty(\tilde{\Omega}\setminus\tilde{\Gamma})} .
\] 
Let $v\in \tilde \Omega$ be such that $|u-v|\le a\e$. 
We then have, recalling \ref{assum:gdec}, that
\begin{align*}
 c_g (\e C_g)^{-1}  \pa {f^\e(u,t)-f^\e(v,t)}
&\leq (\e C_g)^{-1} g\pa{\frac{|u-v|}\e}\pa{f^\e(u,t)-f^\e(v,t)}  \\
&= J_\e(u,v)\pa{f^\e(u,t)-f^\e(v,t)} \\
&\leq \abs{\nabla_{J_\e}^- f^\e(u,t)}_\infty\\
&\leq \bpa{L+\norm{\tilde{P}}_{L^\infty(\tilde{\Omega}\setminus\tilde{\Gamma})}}.
\end{align*}
Exchanging the role of $u$ and $v$, we get the result.

The global estimate is now a direct consequence of \eqref{lip-t-space}. Indeed, we have
\begin{align*}
\babs{f^\e(u,t) - f^\e(v,t)} \leq \sum_{i=1}^{k(\e)-1}\babs{f^\e(u_{i+1},t) - f^\e(u_i,t)} 
&\leq c_g^{-1}C_g \bpa{L + \norm{\tilde{P}}_{L^\infty(\tilde{\Omega}\setminus\tilde{\Gamma})}} \sum_{i=1}^{k(\e)-1} \e \\
&\leq c_g^{-1}C_g \bpa{L + \norm{\tilde{P}}_{L^\infty(\tilde{\Omega}\setminus\tilde{\Gamma})}}  k(\e) \e .
\end{align*}
\end{proof}

The following lemma gives a sufficient condition under which the requirements of the global estimate of Theorem~\ref{lip-viscosity-J} hold true.

\begin{lem}\label{lip-viscosity-J-1}
Suppose that assumptions~\ref{assum:Om}--\ref{assum:psi}, \ref{assum:gpos}--\ref{assum:gdec} and \ref{assum:psisssol-J} hold. Let $f^\e$ be the bounded continuous viscosity solution of \eqref{cauchy-J}.
Assume also that 
\begin{equation}\label{eq:compatdomains}
\max_{x \in \Omega} d(x,\tilde{\Omega}) < a\e/(4\sqrt{m}).
\end{equation}
Then for all $(u,v) \in \tilde{\Omega}^2$ and $t \in [0,T[$, the following holds
\begin{eqnarray}
\abs{f^\e(u,t) - f^\e(v,t)} \leq K\pa{|u-v|+\e} ,
\label{eq:globlip-space-J}
\end{eqnarray}
where $K=2c_g^{-1}C_g\bpa{L + \norm{\tilde{P}}_{L^\infty(\tilde{\Omega}\setminus\tilde{\Gamma})}}m^{3/2}$.
\end{lem}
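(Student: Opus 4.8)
The goal is to upgrade the global Lipschitz-type estimate \eqref{eq:globlip-space-J-1} of Theorem~\ref{lip-viscosity-J}, which depends on the number $k(\e)$ of hops in an admissible path, into the clean bound \eqref{eq:globlip-space-J} with an explicit dependence on $|u-v|+\e$. The plan is therefore to use hypothesis \eqref{eq:compatdomains} to \emph{construct}, for any pair $(u,v) \in \tilde\Omega^2$, an explicit admissible path in $\tilde\Omega$ whose consecutive points are at distance at most $a\e$, and then to \emph{count} its hops $k(\e)$ in terms of $|u-v|$ and $\e$, so that Theorem~\ref{lip-viscosity-J} can be applied directly.

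First I would lay down a candidate chain of points in the ambient space $\R^m$ interpolating linearly between $u$ and $v$, spaced finely enough that each step is comfortably smaller than $a\e$; a natural choice is to subdivide the segment $[u,v]$ into $\lceil |u-v|/(a\e/2)\rceil$ pieces, giving points $x_1=u, x_2, \dots$ with $|x_{i+1}-x_i| \le a\e/2$. These $x_i$ live in $\R^m$, not necessarily in $\tilde\Omega$, so the key step is to \emph{snap} each $x_i$ to a nearby vertex $u_i \in \tilde\Omega$ using the density hypothesis \eqref{eq:compatdomains}: since $\max_{x\in\Omega} d(x,\tilde\Omega) < a\e/(4\sqrt m)$, there is $u_i \in \tilde\Omega$ with $|x_i-u_i| < a\e/(4\sqrt m)$. (One must check the $x_i$ lie in $\Omega$, or at least that the density bound can be invoked at each of them; convexity or the fact that the relevant points sit in $\Omega$ should handle this, and I would flag the role of the $\sqrt m$ factor as coming from comparing the Euclidean covering radius across coordinates.) The triangle inequality then gives $|u_{i+1}-u_i| \le |u_{i+1}-x_{i+1}| + |x_{i+1}-x_i| + |x_i - u_i| \le a\e/2 + a\e/(4\sqrt m)+a\e/(4\sqrt m) \le a\e$, so $(u_1,\dots,u_{k(\e)})$ is an admissible path in the sense required by \eqref{eq:globlip-space-J-1}, with endpoints that I must still connect to the true $u,v$ by one extra short hop each.

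Next I would bound the number of hops: $k(\e) \lesssim |u-v|/(a\e) + 1$, so that $k(\e)\e \lesssim |u-v|/a + \e$. Feeding this into \eqref{eq:globlip-space-J-1} yields a bound of the form $c_g^{-1}C_g(L+\|\tilde P\|_{L^\infty})\,(|u-v|/a+\e)$ up to absolute constants, which after absorbing $a$ and the dimensional constants into the prefactor produces exactly the claimed $K(|u-v|+\e)$ with $K=2c_g^{-1}C_g(L+\|\tilde P\|_{L^\infty(\tilde\Omega\setminus\tilde\Gamma)})m^{3/2}$. I would be careful to track constants so the final $m^{3/2}$ and the factor $2$ come out correctly, since the statement pins these down; the $m^{3/2}$ plausibly arises as $\sqrt m$ from the snapping step times an $m$ from relating the number of hops to the geometry.

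The main obstacle I expect is the bookkeeping in the snapping construction: ensuring every interpolating point $x_i$ is a legitimate argument for the density hypothesis \eqref{eq:compatdomains} (i.e.\ lies in $\Omega$, which holds if $\Omega$ is convex or if one reroutes slightly), verifying that the snapped path genuinely stays admissible including the two endpoint hops, and propagating the dimensional and numerical constants so that the stated value of $K$ is matched exactly rather than merely up to an absolute constant. The qualitative argument—interpolate, snap, count, apply Theorem~\ref{lip-viscosity-J}—is robust; the delicacy is entirely in making the constants and the membership conditions line up with the precise statement.
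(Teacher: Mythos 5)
Your proposal is correct in substance and is essentially the paper's argument: both reduce \eqref{eq:globlip-space-J} to the chained estimate \eqref{eq:globlip-space-J-1} of Theorem~\ref{lip-viscosity-J} by using \eqref{eq:compatdomains} to manufacture a path in $\tilde{\Omega}$ with hops of length at most $a\e$, and then counting the hops. The only difference is the mechanism: you subdivide the segment $[u,v]$ and snap each subdivision point to a nearby point of $\tilde{\Omega}$, whereas the paper covers $\Omega$ by hypercubes of side $2\delta$, $\delta = a\e/(4\sqrt{m})$, centered at a finite $\delta$-net $S_\delta \subset \Omega$ for the $\ell_\infty$ metric, observes that \eqref{eq:compatdomains} forces every such hypercube to contain a point of $\tilde{\Omega}$, and hops between points of $\tilde{\Omega}$ lying in adjacent hypercubes. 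The paper's variant has the small advantage that the density hypothesis is invoked only at the net points, which lie in $\Omega$ by construction; the issue you flag — that your interpolation points $x_i$ may exit $\Omega$ when $\Omega$ is not convex, so that \eqref{eq:compatdomains} cannot be applied at them — therefore does not arise at that step. It does, however, reappear in the paper's unproved assertion that the chain of adjacent hypercubes joining $u$ to $v$ has at most $m\e^{-1}\pa{2\sqrt{m}|u-v|+\e}$ elements, which tacitly assumes the chain can follow the straight segment; so both write-ups rest on the same implicit geometric assumption, and yours is no weaker (indeed, you are the one who flags it). As for constants: your count yields $k(\e)\e \lesssim |u-v|/a + \e$, hence a prefactor containing $1/a$; carried out carefully, the paper's hypercube count (cubes of side $a\e/(2\sqrt{m})$) produces exactly the same $1/a$ factor, which is silently absorbed into the stated $K=2c_g^{-1}C_g\bpa{L + \norm{\tilde{P}}_{L^\infty(\tilde{\Omega}\setminus\tilde{\Gamma})}}m^{3/2}$. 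So your inability to match $K$ on the nose is shared by the source, and is immaterial for how the lemma is used downstream, where only the structure $K(|u-v|+\e)$ with $K$ independent of $\e$ matters.
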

\begin{proof}
We use a discretization argument\footnote{A similar argument is implicitly underlying the proof of \cite[Lemma~15]{Calder19} for the special case where $\Omega$ is the flat torus and $\tilde{\Omega}$ is discrete.} through the notion of $\delta$-nets. Consider $\Omega$ as a metric space endowed with the metric induced by the $|\cdot|_{\infty}$-norm. A $\delta$-net of $\Omega$ is a set $\set{x_1,x_2,\ldots,x_N} \eqdef S_\delta \subset \Omega$ such that for all $x \in \Omega$, there exists $y \in S_\delta$ such that $|x-y|_\infty \leq \delta$. This is equivalent here to saying that $\Omega$ can be covered by hypercubes of side length $2\delta$ centered at the points in $S_\delta$. It is known that $\Omega$ is compact if and only if $S_\delta$ is finite.

Choose $\delta = a\e/(4\sqrt{m})$. Thus, using that $S_\delta \subset \Omega$, we get
\begin{align*}
\max_{x \in S_\delta}\min_{y \in \tilde{\Omega}} |x-y|_\infty 
&\leq \max_{x \in S_\delta}d(x,\tilde{\Omega}) \\
&\leq \max_{x \in \Omega}d(x,\tilde{\Omega}) = \distH(\Omega,\tilde{\Omega}),
\end{align*}
where the last identity follows from the fact that $\tilde{\Omega} \subset \Omega$. It then follows from \eqref{eq:compatdomains} that
\begin{align*}
\max_{x \in S_\delta}\min_{y \in \tilde{\Omega}} |x-y|_\infty < a\e/(4\sqrt{m}) ,
\end{align*}
whence we deduce that each hypercube of the $\delta$-covering contains at least one point in $\tilde{\Omega}$. This in turn entails that for any $(u,v) \in \tilde{\Omega}^2$ that belong to two horizontally or vertically adjacent hypercubes in the $\delta$-covering, centered say at respectively $x_i$ and $x_j$ in $S_\delta$, one has
\[
\babs{u-v} \leq \babs{u-x_i} + \babs{x_i-x_j} + \babs{v-x_j} \leq \sqrt{m}\pa{\delta + 2\delta + \delta} = a\e .
\]
This allows to infer that for any $(u,v) \in \tilde{\Omega}^2$, there exists a path $(u_1=u,u_2,u_3,\ldots,u_k=v)$, where $u_i \in \tilde{\Omega}$ and $|u_{i+1}-u_i| \leq a\e$ for all $i$. Moreover, we have the simple estimate
\[
k \leq m \e^{-1}\pa{2\sqrt{m}|u-v| + \e} .
\]
Injecting this in \eqref{eq:globlip-space-J-1}, we get the result.
\end{proof}

\begin{rem}\label{rem:compatdomainker}
A consequence of the proof of Lemma \ref{lip-viscosity-J-1} is that, under assumption \eqref{eq:compatdomains}, since $a\le r_g$, we have
\begin{equation}\forall u \in \tilde{\Omega}, \exists v \in \tilde{\Omega}, v \neq u \text{ such that } |u-v| \in \e \supp(g).
\label{assum:compatdomainker}
\end{equation}
This assumption is quite natural. It basically avoids that the non-local operator $\babs{\nabla_{J_\e}^- f^\e(u,s)}_\infty$ is trivially zero for all $u \in \tilde{\Omega}$ when $\e$ is too small. In particular, as $\tilde{\Omega}$ is finite, this condition imposes that $\tilde{\Omega}$ has to fill out $\Omega$ at least as fast as the rate at which $\e$ goes to $0$.
\end{rem}

%
%

\section{Consistency and error bounds}
\label{sec:main}

\subsection{Continuous time non-local to local error bound}\label{subsec:eikconvcont}
In this section we provide an estimate that compares viscosity solutions of \eqref{cauchy-J} and \eqref{cauchy}. This estimate will be instrumental to derive the remaining error bounds. For this, we need to strenghthen \eqref{eq:compatdomains} by assuming:
\\
\noindent\fbox{\parbox{0.975\textwidth}{
\textup{
\begin{enumerate}[label=({\textbf{H.\arabic*}}),itemindent=5ex,start=12]
\item $\max_{x \in \Omega} d(x,\tilde{\Omega}) \leq a\e^{1+\nu}/(4\sqrt{m})$, $\nu > 0$. \label{assum:compatdomains}
\end{enumerate}}}}\\

\begin{thm}\label{thm:continuous-time-estimate}
Let $T>0$, $\e_0=\min(1/(2r_g)^2,1)$ and $\e \in ]0,\e_0]$. Suppose that assumptions~\ref{assum:Om}--\ref{assum:compatdomains} hold, and let $f$ and $f^\varepsilon$ be the unique viscosity solutions of respectively \eqref{cauchy} and \eqref{cauchy-J}, given in Proposition~\ref{pro:existence} and Proposition~\ref{pro:existence-J}. Then, there exists  a constant $K > 0$ depending only on the dimension $m$, $\norm{\psi}_{L^\infty(\Omega)}$, $\norm{P}_{L^\infty(\Omega\setminus\Gamma)}$, $\Lip{\psi}$, $\Lip{\tilde{\psi}}$, $\Lip{P}$, $\Lip{\tilde{P}}$, $L_g$, $C_g$ and $c_g$ such that 
\begin{align*}
\normL{f^\e-f}_{L^\infty(\tilde{\Omega} \times [0,T[)} 
\leq& K(T+1)\left(\e^{\min(\nu,1/2)}
+ \normL{P - \tilde{P}}_{L^\infty(\tilde{\Omega}\setminus\tilde{\Gamma})}\right) + \normL{\psi - \tilde{\psi}}_{L^\infty(\tilde{\Omega})}
+ K\distH(\Gamma,\tilde \Gamma),
\end{align*}
In particular, if $\distH(\Gamma,\tilde \Gamma) = O(\e^{\min(\nu,1/2)})$, then
\begin{align*}
\normL{f^\e-f}_{L^\infty(\tilde{\Omega} \times [0,T[)} \leq K(T+1)\left(\e^{\min(\nu,1/2)}
+ \normL{P - \tilde{P}}_{L^\infty(\tilde{\Omega}\setminus\tilde{\Gamma})}\right)
+ \normL{\psi - \tilde{\psi}}_{L^\infty(\tilde{\Omega})} .
\end{align*}
The fastest convergence rate in $\e$ is then achieved when $\nu=1/2$ provided that $\distH(\Omega,\tilde \Omega) = O(\e^{3/2})$ and $\distH(\Gamma,\tilde \Gamma) = O(\e^{1/2})$. 
\end{thm}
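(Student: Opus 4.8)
The plan is to prove the one-sided bound $f^\e - f \le \text{(RHS)}$ on $\tilde\Omega \times [0,T[$ by a doubling-of-variables argument coupled with a consistency estimate for the non-local operator; the reverse bound $f - f^\e \le \text{(RHS)}$ then follows by an entirely symmetric argument exchanging the roles of sub- and super-solution. First I would fix $\xi,\gamma,\eta>0$ and constants $\lambda,C_0\ge 0$ to be calibrated later, and introduce the auxiliary function
\[
\Phi(u,x,t,s) = f^\e(u,t) - f(x,s) - \frac{|u-x|^2}{2\xi} - \frac{|t-s|^2}{2\gamma} - \frac{\eta}{T-t} - \lambda t - C_0
\]
on $\tilde\Omega \times \Omega \times [0,T[ \times [0,T[$. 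Arguing by contradiction I would assume $\sup \Phi > 0$; by upper semicontinuity and compactness the supremum is a maximum attained at some $(\bar u, \bar x, \bar t, \bar s)$, and sending $\gamma \to 0$ through the standard lemma used in the proof of Proposition~\ref{lem-comparison-cont} collapses the two time variables. The regularity estimates of Theorem~\ref{lip-viscosity} and Theorem~\ref{lip-viscosity-J} then force $|\bar u - \bar x| = O(\xi)$, so that the penalized slope $|\bar u - \bar x|/\xi$ stays bounded.

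Next I would dispose of the boundary cases. If $\bar t = 0$, or $\bar u \in \tilde\Gamma$, or $\bar x \in \Gamma$, the value of $f^\e$ (resp. $f$) is pinned to $\tilde\psi$ (resp. $\psi$) by the boundary conditions and the barrier bounds $\tilde\psi_b \le f^\e$ and $\psi_b \le f$ from Proposition~\ref{pro:existence} and Proposition~\ref{pro:existence-J}; since $|\bar u - \bar x| = O(\xi)$ and a point of $\Gamma$ lies within $\distH(\Gamma,\tilde\Gamma)$ of $\bar u$, the difference $f^\e(\bar u,\bar t) - f(\bar x,\bar s)$ is controlled by $\normL{\psi - \tilde\psi}_{L^\infty(\tilde\Omega)} + \Lip{\psi}\distH(\Gamma,\tilde\Gamma) + O(\xi)$. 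Choosing $C_0 = \normL{\psi - \tilde\psi}_{L^\infty(\tilde\Omega)} + K\distH(\Gamma,\tilde\Gamma)$ then contradicts $\Phi(\bar u,\bar x,\bar t,\bar s) > 0$, so the maximum must be interior: $\bar u \in \tilde\Omega \setminus \tilde\Gamma$, $\bar x \in \Omega \setminus \Gamma$, and $\bar t,\bar s \in\ ]0,T[$.

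In the interior case I would use $t \mapsto \frac{|t-\bar s|^2}{2\gamma} + \frac{\eta}{T-t} + \lambda t$ as a time test function for the non-local sub-solution $f^\e$ at $\bar u$, and $(x,s) \mapsto -\frac{|\bar u - x|^2}{2\xi} - \frac{|\bar t - s|^2}{2\gamma}$ as a test function for the local super-solution $f$ at $(\bar x,\bar s)$, whose spatial gradient is $(\bar u - \bar x)/\xi$. Subtracting the two viscosity inequalities removes the common time-derivative term and yields
\[
\lambda + \frac{\eta}{(T-\bar t)^2} \le \frac{|\bar u - \bar x|}{\xi} - \babs{\nabla_{J_\e}^- f^\e(\bar u,\bar t)}_\infty + \tilde P(\bar u) - P(\bar x).
\]
The crux is then to establish $\babs{\nabla_{J_\e}^- f^\e(\bar u,\bar t)}_\infty \ge |\bar u - \bar x|/\xi - E$ with $E = O(\e^{\min(\nu,1/2)})$. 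For this I would exploit the maximality of $\Phi$ in $u$, which gives $f^\e(\bar u,\bar t) - f^\e(v,\bar t) \ge (|\bar u - \bar x|^2 - |v - \bar x|^2)/(2\xi)$ for every $v \in \tilde\Omega$, and test it on the vertex $v$ nearest to the ideal point $v^\ast = \bar u - \e\tau^\ast n$, where $n = (\bar u - \bar x)/|\bar u - \bar x|$ and $\tau^\ast$ realizes $\sup_\tau \tau g(\tau) = C_g$. A direct computation shows that the continuum choice $v^\ast$ contributes the slope $|\bar u - \bar x|/\xi$ minus a term of order $\e/\xi$, while assumption~\ref{assum:compatdomains}, through the net argument of Lemma~\ref{lip-viscosity-J-1} and Remark~\ref{rem:compatdomainker}, guarantees an admissible vertex $v \in \tilde\Omega$ within $O(\e^{1+\nu})$ of $v^\ast$; the induced discretization error is bounded by $O(\e^\nu)$ using the $L_g$-Lipschitz continuity of $g$ from \ref{assum:glip} together with the local space estimate \eqref{lip-t-space}.

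Finally, balancing the two error sources by the calibration $\xi \asymp \e^{1/2}$ gives $E = O(\e^{\min(\nu,1/2)})$, and since $|\bar u - \bar x| = O(\xi) = O(\e^{1/2})$ the potential mismatch obeys $\tilde P(\bar u) - P(\bar x) \le \normL{P - \tilde P}_{L^\infty(\tilde\Omega\setminus\tilde\Gamma)} + \Lip{P}\,O(\e^{1/2})$. The displayed inequality then reduces to $\lambda \le K(\e^{\min(\nu,1/2)} + \normL{P-\tilde P}_{L^\infty(\tilde\Omega\setminus\tilde\Gamma)})$, so choosing $\lambda$ strictly above this right-hand side produces the sought contradiction and forces $\sup\Phi \le 0$. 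Evaluating $\Phi \le 0$ at $x=u$, $s=t$ and letting $\eta \to 0$ yields $f^\e(u,t) - f(u,t) \le \lambda t + C_0$, which is the claimed estimate after using $t \le T$ and collecting constants into the factor $K(T+1)$; the ``in particular'' and optimal-rate statements are then immediate once $\distH(\Gamma,\tilde\Gamma) = O(\e^{\min(\nu,1/2)})$ and $\nu = 1/2$. \textbf{The main obstacle} is the consistency step: because $\tilde\Omega$ is discrete, the optimal direction $v^\ast$ is generically not a vertex, and—exactly as flagged in the introduction, where the optimal $\e$ depended on $u$—one must simultaneously control the replacement of $v^\ast$ by a genuine vertex and tune $\xi$ against $\e$ so that the continuum error $O(\e/\xi)$ and the discretization error $O(\e^\nu)$ are balanced at the level $O(\e^{\min(\nu,1/2)})$.
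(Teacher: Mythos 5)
Your overall architecture (doubling of variables, contradiction via a large linear-in-time penalization, boundary cases controlled by $\normL{\psi-\tilde\psi}_{L^\infty(\tilde\Omega)}$ and $\distH(\Gamma,\tilde\Gamma)$, a consistency lower bound for the non-local operator at the interior maximum, and the calibration $\xi \asymp \e^{1/2}$) matches the paper's proof, and your bookkeeping of the two error sources $O(\e/\xi)$ and $O(\e^{\nu})$ is correct. However, there is a genuine gap in the step where you dispose of boundary cases, and it is exactly the difficulty the paper's proof is organized around. You exclude only exact boundary membership ($\bar t=0$, $\bar u\in\tilde\Gamma$, $\bar x\in\Gamma$) and then treat every remaining maximum point as interior. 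But in your interior consistency step, the ideal point $v^\ast=\bar u-\e\tau^\ast n$ must lie in $\Omega$: assumption \ref{assum:compatdomains} guarantees a vertex of $\tilde\Omega$ within $O(\e^{1+\nu})$ of points \emph{of} $\Omega$, not of arbitrary points of $\R^m$. If the interior vertex $\bar u$ lies within distance $\e\tau^\ast\le\e r_g$ of $\partial\Omega$ (recall $\partial\Omega\subseteq\Gamma$), the segment from $\bar u$ to $v^\ast$ can cross $\partial\Omega$, so that $v^\ast\notin\Omega$ and no admissible vertex near $v^\ast$ need exist; then the lower bound $\babs{\nabla_{J_\e}^- f^\e(\bar u,\bar t)}_\infty\ge|\bar u-\bar x|/\xi - E$ fails and your contradiction cannot be closed. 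Since $\e$ is fixed in the proof and the location of the maximum point is not under your control, this case cannot be dismissed; it is precisely the ``choice of $\e$ depends on the point $u$'' issue flagged in the introduction, which is primarily about closeness of $u$ to $\partial\Omega$ rather than about the discreteness of $\tilde\Omega$, which is the reading you give it.

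The paper resolves this by a three-way case analysis with a boundary \emph{layer}: it fixes $\alpha=\e^{1/2}$ and shows that for $\eta$ large either the maximum point lies outside $\mathcal{N}^{\alpha}_{\Gamma}$ — in which case $\e\le\e_0$ gives $\e r_g\le\alpha/2$, hence $\ball_{\e r_g}(\bar u)\subset\Omega$ and the consistency computation is legitimate — or it lies in $\mathcal{N}^{\alpha}_{\Gamma}\times[0,T[\ \cup\ \Omega\times\{0\}$, which is then handled like a boundary case by projecting onto $\tilde\Gamma$ (resp.\ $\Gamma$) and invoking the regularity estimates \eqref{eq:lipspace} and \eqref{eq:globlip-space-J}, at the price of an extra $K\pa{\distH(\Gamma,\tilde\Gamma)+\alpha}$. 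This layer is not a technical convenience: it is the source of the $K\distH(\Gamma,\tilde\Gamma)$ term and of part of the $\e^{1/2}$ term in the statement of Theorem~\ref{thm:continuous-time-estimate}. To repair your proof you must add this third alternative (maximum in a layer of width at least $\e r_g$ around $\Gamma$) to your case analysis and estimate $\Phi$ there as the paper does. With that addition, your single-direction consistency argument — testing the maximality inequality at the vertex nearest $v^\ast$ — is a legitimate and slightly more elementary substitute for the paper's decomposition into the terms $\mathrm{T}_1$, $\mathrm{T}_2$, $\mathrm{T}_3$ with the inclusion $\Omega\subset\tilde\Omega+\ball_{\beta}(0)$.
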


\begin{proof}
The idea of the proof is inspired by that in \cite{cl84} and revisited in \cite{nf08} for non-local equations. The main difficulties in our case come from the fact that the sets and the boundaries in the two equations are different and from the fact that $\tilde \Omega$ is finite. In the following, $K$ denotes any positive constant that depends only on the data, but may change from one line to another.

\tcb{The proof is divided into three steps:
\begin{itemize}
\item In the first step, we use the classical doubling of variables argument by introducing the test-function  
\begin{equation}\label{varphi-ct}
\Psi_{\gamma,\eta}(u,s,x,t) = f^\epsilon(u,s) - f(x,t) - \frac{|x-u|^2}{2\gamma} -\frac{|t-s|^2}{2\gamma} - \eta s,
\end{equation}
for $\gamma > 0$ and $\eta > 0$, and we show that the maximum point is attained at $(\bar{u},\bar{s},\bar{x},\bar{t}) \in \tilde{\Omega}_T \times \Omega_T$. We also show some estimates on this point of maximum. 
\item In the second step, we show that the maximum point $(\bar{u},\bar{s},\bar{x},\bar{t})$ is actually achieved on the {\textit{boundary}} for $\eta$ large enough; we have either $(\bar{u},\bar{s}) \in \mathcal N^\alpha_\Gamma\times [0,T[\ \cup\  \tilde \Omega\times \{0\}$  or $(\bar{x},\bar{t}) \in \mathcal N^\alpha_\Gamma\times [0,T[\ \cup\  \Omega\times \{0\}$. This step is the most complicated and the most technical one. 
\item Finally, in the last step, we show, for $\eta$ large enough and using the estimates over the maximum points, the claimed bound.
\end{itemize}
}

\begin{enumerate}[label=Step~\arabic*.]
\item {\em Test-function and maximum point.}
%
%
%

\noindent 
%


The function $\Psi_{\gamma,\eta}$ being continuous (since $f$ and $f^\e$ are) on the compact set $\tilde{\Omega}_{T} \times \Omega_T$ (see \ref{assum:Om}), it achieves its maximum at a point which we denote by $(\bar{u},\bar{s},\bar{x},\bar{t})\in  \tilde{\Omega}_{T} \times \Omega_T$. At this point, we have $\Psi_{\gamma,\eta}(\bar{u},\bar{s},\bar{x},\bar{t}) \geq \Psi_{\gamma,\eta}(\bar{u},\bar{s},\bar{u},\bar{t})$ since $\bar{u} \in \tilde{\Omega} \subset \Omega$ by \ref{assum:Om}. This implies, in view of \eqref{eq:lipspace} (see Theorem~\ref{lip-viscosity}), that
\[
\frac{|\bar x - \bar u|^2}{2\g}\le f(\bar u,\bar t)-f(\bar x,\bar t)\le K |\bar x - \bar u|,
\]
and thus,
\begin{equation}\label{lip-pro-x-ct}
|\bar x - \bar u| \leq K\gamma .
\end{equation}
In the same way, using that $ \Psi_{\gamma,\eta}(\bar{u},\bar{s},\bar{x},\bar{t}) \geq \Psi_{\gamma,\eta}(\bar{u},\bar{t},\bar{x},\bar{t}) $, we get using \eqref{lip-t-ct},
\begin{equation}\label{lip-pro-t-ct}
|\bar{t} - \bar{s}| \leq (K+\eta) \gamma.
\end{equation}

\item {\em Excluding interior points from the maximum.}

\noindent We fix  $\a=\e^{1/2}$ so that $\e<\a/r_g$ for $\e\le \e_0$. We show that for $\eta$ large enough, we have either $(\bar{u},\bar{s}) \in \mathcal N^\a_\Gamma\times [0,T[\ \cup\  \tilde \Omega\times \{0\}$  or $(\bar{x},\bar{t}) \in \mathcal N^\a_\Gamma\times [0,T[\ \cup\  \Omega\times \{0\}$. We argue by contradiction, assuming that $(\bar u,\bar s)\in (\tilde \Omega \setminus \mathcal N^\a_\Gamma) \times ]0,T[$ and $(\bar x,\bar t)\in (\Omega\setminus \mathcal N^\a_\Gamma) \times ]0,T[$. Using that $\bar{s}$ is a maximum point of the function $s \mapsto \Psi_{\gamma,\eta}(\bar{u},s,\bar{x},\bar{t})$ and the fact that $f^\e$ is a viscosity sub-solution of \eqref{cauchy-J}, we get
\begin{equation}
\eta + \frac{\bar{s} - \bar{t}}{\gamma}  \leq -\babs{\nabla_{J_\e}^- f^\e(\bar{u},\bar{s})}_\infty + \tilde{P}(\bar{u}), \label{sub-ineq}
\end{equation}
In the same way, using that $(\bar{x},\bar{t})$ is a minimum point of the function $(x,t) \mapsto -\Psi_{\gamma,\eta}(\bar{u},\bar{s},x,t)$ and the fact that $f$ is a super-solution of \eqref{cauchy}, we get
\begin{equation}
\frac{\bar{s} - \bar{t}}{\gamma}  \geq - \frac{|\bar x - \bar u|}{\gamma} + P(\bar{x}) .
\label{super-ineq}
\end{equation}
Observe that $\forall v \in \tilde{\Omega}$, we have
\[
\Psi_{\gamma,\eta}(\bar{u},\bar{s},\bar{x},\bar{t}) - \Psi_{\gamma,\eta}(v,\bar{s},\bar{x},\bar{t}) = f^\e(\bar{u},\bar{s}) - f^\e(v,\bar{s}) + \frac {|\bar{x} - v|^2 - |\bar{x} - \bar{u}|^2}{2\g} .
\]
$(\bar{u},\bar{s},\bar{x},\bar{t})$ being a maximizer of $\Psi_{\gamma,\eta}$, we have for any $v \in \tilde{\Omega}$
\begin{align*}
2\g\pa{f^\e(\bar{u},\bar{s}) - f^\e(v,\bar{s})} 
&\geq |\bar{x} - \bar{u}|^2 - |\bar{x} - v|^2 \\
&= -|\bar{u}-v|^2 + 2\dotp{v-\bar{u}}{\bar{x}-\bar{u}} .
\end{align*}

It then follows that
\begin{equation}\label{ineq-2-ct}
\begin{aligned}
\babs{\nabla_{J_\e}^- f^\e(\bar{u},\bar{s})}_\infty
&= \max_{v\in\tilde{\Omega} \cap \ball_{\e r_g}(\bar{u})}J_\e(\bar{u},v) (f^\e(\bar{u},\bar{s}) -f^\e(v,\bar{s})) \\
&\geq (2\gamma)^{-1}\max_{v\in\tilde{\Omega} \cap \ball_{\e r_g}(\bar{u})}J_\e(\bar{u},v) \pa{-|\bar{u}-v|^2 + 2\dotp{v-\bar{u}}{\bar{x}-\bar{u}}} \\
&\geq (2\gamma)^{-1}\Bpa{2~~\underset{\mathrm{T}_1}{\underbrace{\max_{v\in\tilde{\Omega} \cap \ball_{\e r_g}(\bar{u})}J_\e(\bar{u},v)\dotp{v-\bar{u}}{\bar{x}-\bar{u}}}} -  \underset{\mathrm{T}_2}{\underbrace{\max_{v\in\tilde{\Omega} \cap \ball_{\e r_g}(\bar{u})}J_\e(\bar{u},v)|\bar{u}-v|^2}}} .
\end{aligned}
\end{equation}
To bound the term $\mathrm{T}_2$, we have, using \ref{assum:gpos}, Remark~\ref{assum:gubnd} and Remark~\ref{rem:compatdomainker} (see \eqref{assum:compatdomainker})
\begin{equation}\label{eq:T2}
\begin{aligned}
\mathrm{T}_2
&\leq \max_{v\in\ball_{\e r_g}(\bar{u})}J_\e(\bar{u},v)|\bar{u}-v|^2 \\
&= \max_{\tau \in [0,r_g]} \max_{|\bar{u}-v| = \e\tau} (\e C_g)^{-1}g\pa{\frac{|\bar{u}-v|}{\e}} |\bar{u}-v|^2 \\
&= \e \max_{\tau \in [0,r_g]} \tau^2 C_g^{-1}g(\tau) \leq r_g\e .
\end{aligned}
\end{equation}
Let us now turn to bounding $\mathrm{T}_1$. Observe that {$\mathrm{T}_1 \geq 0$} since $\bar{u} \in \tilde{\Omega} \cap \ball_{\e r_g}(\bar{u})$. We then decompose $\mathrm{T}_1$ as
\begin{multline}\label{eq:T1}
\mathrm{T}_1
=\underset{\mathrm{T}_3}{\underbrace{\max_{v\in\Omega \cap \ball_{\e r_g}(\bar{u})}J_\e(\bar{u},v)\dotp{v-\bar{u}}{\bar{x}-\bar{u}}}} \\
+ \Bpa{\max_{v\in\tilde{\Omega} \cap \ball_{\e r_g}(\bar{u})}J_\e(\bar{u},v)\dotp{v-\bar{u}}{\bar{x}-\bar{u}}
- \max_{v\in\Omega \cap \ball_{\e r_g}(\bar{u})}J_\e(\bar{u},v)\dotp{v-\bar{u}}{\bar{x}-\bar{u}}} .
\end{multline}
Note that $\mathrm{T}_3$ is also non-negative. Since $d(\bar{u},\Gamma)\ge \a$, we have, for $\e\le \e_0$, $\ball_{\e r_g}(\bar{u}) \subset \Omega$, and in turn,
\begin{equation}\label{eq:T3}
\begin{aligned}
\mathrm{T}_3
&=\max_{v\in\ball_{\e r_g}(\bar{u})}J_\e(\bar{u},v)\dotp{v-\bar{u}}{\bar{x}-\bar{u}} \\
&= \max_{\tau \in [0,r_g]} \max_{|\bar{u}-v| = \e\tau} (\e C_g)^{-1}g\pa{\frac{|\bar{u}-v|}{\e}}\dotp{v-\bar{u}}{\bar{x}-\bar{u}} \\
&= \max_{\tau \in [0,r_g]} (\e C_g)^{-1}g(\tau)\e\tau |\bar{x}-\bar{u}| = |\bar{x}-\bar{u}| .
\end{aligned}
\end{equation}
On the other hand, one can bound $\mathrm{T}_3$ from above as follows. Let $\beta=a\e^{1+\nu}/(4\sqrt{m})$. In view of \ref{assum:compatdomains}, we have $\Omega \subset \tilde{\Omega} + \ball_{\beta}(0)$, and thus for $\e$ small enough so that $\beta<\epsilon$ and using \ref{assum:glip}
\begin{equation}\label{eq:ubT3}
\begin{aligned}
0 \leq \mathrm{T}_3
&\leq \max_{v\in\pa{\tilde{\Omega}+\ball_{\beta}(0)} \cap \ball_{\e r_g}(\bar{u})}J_\e(\bar{u},v)\dotp{v-\bar{u}}{\bar{x}-\bar{u}} \\
&\leq \max_{v\in\pa{\tilde{\Omega} \cap \ball_{\e (r_g+1)}(\bar{u})}+\ball_{\beta}(0)} J_\e(\bar{u},v)\dotp{v-\bar{u}}{\bar{x}-\bar{u}} \\
&= (\e C_g)^{-1}\max_{v\in\tilde{\Omega} \cap \ball_{\e (r_g+1) }(\bar{u}),w \in \ball_{\beta}(0)} g\pa{\frac{|v-\bar{u}+w|}{\e}}\dotp{v-\bar{u}+w}{\bar{x}-\bar{u}} \\
&= (\e C_g)^{-1}\max_{v\in\tilde{\Omega} \cap \ball_{\e (r_g+1)}(\bar{u}),w \in \ball_{\beta}(0)} \pa{g\pa{\frac{|v-\bar{u}|}{\e}}+L_g\frac{|w|}{\e}}\dotp{v-\bar{u}+w}{\bar{x}-\bar{u}} \\
&=\max_{v\in\tilde{\Omega} \cap \ball_{\e (r_g+1)}(\bar{u}),w \in \ball_{\beta}(0)} \pa{J_\e(\bar{u},v)+L_gC_g^{-1}\frac{|w|}{\e^2}}\dotp{v-\bar{u}+w}{\bar{x}-\bar{u}} \\
&\leq \max_{v\in\tilde{\Omega} \cap \ball_{\e r_g}(\bar{u})} J_\e(\bar{u},v)\dotp{v-\bar{u}}{\bar{x}-\bar{u}}
+ \max_{v\in\tilde{\Omega} \cap \ball_{\e r_g}(\bar{u}),w \in \ball_{\beta}(0)} J_\e(\bar{u},v)\dotp{w}{\bar{x}-\bar{u}} \\
& + L_gC_g^{-1}\max_{v\in\tilde{\Omega} \cap \ball_{\e (r_g+1)}(\bar{u}),w \in \ball_{\beta}(0)} \frac{|w|}{\e^2}\dotp{v-\bar{u}}{\bar{x}-\bar{u}} 
+ L_gC_g^{-1}\max_{w \in \ball_{\beta}(0)} \frac{|w|}{\e^2}\dotp{w}{\bar{x}-\bar{u}} .
\end{aligned}
\end{equation}
We have using again \ref{assum:glip} and \eqref{lip-pro-x-ct},
\begin{equation}\label{eq:ubterm1T3}
\begin{aligned}
\max_{v\in\tilde{\Omega} \cap \ball_{\e r_g}(\bar{u}),w \in \ball_{\beta}(0)} J_\e(\bar{u},v)\dotp{w}{\bar{x}-\bar{u}} 
&=\Bpa{\max_{v\in\tilde{\Omega} \cap \ball_{\e r_g}(\bar{u})} J_\e(\bar{u},v)}\Bpa{\max_{w \in \ball_{\beta}(0)} \dotp{w}{\bar{x}-\bar{u}}} \\
&\leq \beta|\bar{x}-\bar{u}| \max_{\tau \in [0,r_g], |v-\bar{u}|=\tau\e} 
(\e C_g)^{-1} \pa{g(0) + L_g\frac{|v-\bar{u}|}{\e}} \\
&\leq K\frac{\beta}{\e}\gamma \leq K\e^\nu\gamma .
\end{aligned}
\end{equation}
Similar computation gives
\begin{equation}\label{eq:ubtermsT3}
\begin{aligned}
L_gC_g^{-1}\max_{v\in\tilde{\Omega} \cap \ball_{\e (r_g+1)}(\bar{u}),w \in \ball_{\beta}(0)} \frac{|w|}{\e^2}\dotp{v-\bar{u}}{\bar{x}-\bar{u}} 
&\leq K \frac{\beta}{\e}\gamma \leq K \e^\nu\gamma \qandq \\
L_gC_g^{-1}\max_{w \in \ball_{\beta}(0)} \frac{|w|}{\e^2}\dotp{w}{\bar{x}-\bar{u}} 
&\leq K \pa{\frac{\beta}{\e}}^2\gamma \leq K \e^\nu\gamma .
\end{aligned}
\end{equation}
Plugging \eqref{eq:ubterm1T3} and \eqref{eq:ubtermsT3} into \eqref{eq:ubT3}, and then combining with \eqref{eq:T3} and \eqref{eq:T1}, we get
\[
\mathrm{T}_1 \geq |\bar{x}-\bar{u}| - K \e^\nu\gamma .
\]
Injecting this and \eqref{eq:T2} into \eqref{ineq-2-ct}, we arrive at
\begin{equation*}
\babs{\nabla_{J_\e}^- f^\e(\bar{u},\bar{s})}_\infty \geq \frac{|\bar{x}-\bar{u}|}{2\gamma} - 
K \pa{\e^\nu + \frac{\e}{\gamma}} .
\end{equation*}
Injecting this bound into \eqref{super-ineq} and combining with \eqref{sub-ineq}, we deduce that if $(\bar u,\bar s)\in (\tilde \Omega \setminus \mathcal N^\a_\Gamma) \times ]0,T[$ and $(\bar x,\bar t)\in (\Omega\setminus \mathcal N^\a_\Gamma) \times ]0,T[$, then
\begin{align}
\eta 
&\leq K \pa{\e^\nu + \frac{\e}{\gamma}} + \tilde{P}(\bar{u}) - P(\bar{x}) \nonumber\\
&\leq K \pa{\e^\nu + \frac{\e}{\gamma}} + \Lip{P} |\bar{x}-\bar{u}| + \normL{P - \tilde{P}}_{L^\infty(\tilde{\Omega} \setminus \tilde{\Gamma})} \nonumber\\
&< 2K \pa{\e^\nu + \frac{\e}{\gamma}} + \Lip{P} K \gamma + \normL{P - \tilde{P}}_{L^\infty(\tilde{\Omega} \setminus \tilde{\Gamma})} \nonumber\\
&\leq K \pa{\e^\nu + \frac{\e}{\gamma} + \gamma}+\normL{P - \tilde{P}}_{L^\infty(\tilde{\Omega} \setminus \tilde{\Gamma})} \eqdef \bar{\eta} \label{eq:etabar} ,
\end{align}
for large enough constant $K > 0$, where we used \ref{assum:Gam} and \ref{assum:P} in the second inequality and  estimate \eqref{lip-pro-x-ct} in the third one. Then we conclude that for $\eta \geq \bar{\eta}$ either $(\bar{u},\bar{s}) \in \mathcal N^\a_\Gamma\times [0,T[\ \cup\  \tilde \Omega\times \{0\}$  or $(\bar{x},\bar{t}) \in \mathcal N^\a_\Gamma\times [0,T[\ \cup\  \Omega\times \{0\}$.\medskip

\item {\em Conclusion.} \label{step:proofconclusion}
We take $\eta \ge \bar\eta$. Assume first that $(\bar{x},\bar{t}) \in \mathcal N^\a_\Gamma\times [0,T[\ \cup\  \Omega\times \{0\}$. If $\bar t=0$, then 
\begin{align*}
\Psi_{{\gamma,\eta}}(\bar{u},\bar{s},\bar{x},\bar{t}) 
&\leq f^\e(\bar{u},\bar{s}) - \psi(\bar{x}) \\
&= (f^\e(\bar{u},\bar{s}) - f^\e(\bar{u},0)) + (\tilde{\psi}(\bar{u}) - \psi(\bar{u})) + (\psi(\bar{u}) - \psi(\bar{x})) \\
&\leq K \bar{s} + \normL{\psi-\tilde\psi}_{L^\infty(\tilde \Omega)} + \Lip{\psi} |\bar{x}-\bar{u}|\\
&\leq K(\eta+1) \gamma + \norm{\psi-\tilde\psi}_{L^\infty(\tilde \Omega)},
\end{align*}
where, in the second inequality, we used \eqref{lip-t-ct} in Theorem~\ref{lip-viscosity-J} to get the first term, and \ref{assum:Om} and \ref{assum:psi} to get the last two terms. In the last inequality, we invoked \eqref{lip-pro-x-ct} and \eqref{lip-pro-t-ct}. In the same way, if $\bar{x} \in \mathcal N_\Gamma^\a$ and $\bar t>0$, let $\tilde u \in \proj_{\tilde{\Gamma}}(\bar{x})$, i.e.,
\[
|\bar x-\tilde u|= d(\bar x,\tilde \Gamma) \le \distH(\Gamma, \tilde \Gamma)+\a .
\]
Such $\tilde u$ exists by closedness of $\tilde{\Gamma}$, see \ref{assum:Gam}. Since \eqref{eq:compatdomains} is in force under \ref{assum:compatdomains}, \eqref{eq:globlip-space-J} holds (see Theorem~\ref{lip-viscosity-J} and Lemma~\ref{lip-viscosity-J-1}). Using this with \ref{assum:psi} and \eqref{lip-pro-x-ct}, we obtain
\begin{equation}\label{eq:bndxonGamma}
\begin{aligned}
\Psi_{{\gamma,\eta}}(\bar{u},\bar{s},\bar{x},\bar{t}) 
&\leq f^\e(\bar{u},\bar{s}) - \psi(\bar{x}) \\
&= (f^\e(\bar{u},\bar{s}) - f^\e(\tilde{u},\bar{s})) + (\tilde{\psi}(\tilde{u}) - \psi(\tilde{u})) + (\psi(\tilde{u}) - \psi(\bar{x})) \\
&\leq K(|\bar{u}-\tilde{u}|+\e) + \normL{\psi-\tilde\psi}_{L^\infty(\tilde \Omega)} + \Lip{\psi} |\bar{x}-\tilde{u}| \\
&\leq K(|\bar{x}-\bar{u}|+\e) + \normL{\psi-\tilde\psi}_{L^\infty(\tilde \Omega)} + K|\bar{x}-\tilde{u}| + \Lip{\psi} |\bar{x}-\tilde{u}| \\
&\leq K (\gamma + \e) + \norm{\psi-\tilde\psi}_{L^\infty(\tilde \Omega)}+ K(\distH(\Gamma,\tilde \Gamma) +\a).
\end{aligned}
\end{equation}

We conclude that for all $(\bar{x},\bar{t}) \in \mathcal N^\a_\Gamma\times [0,T[\ \cup\  \Omega\times \{0\}$, and for $\eta \ge \bar{\eta}$, we have
\[
\Psi_{{\gamma,\eta}}(\bar{u},\bar{s},\bar{x},\bar{t})\leq K (\gamma + \e) + \norm{\psi-\tilde\psi}_{L^\infty(\tilde \Omega)}+ K ( \distH(\Gamma,\tilde \Gamma)+\a)+K \eta \gamma .
\]
The same bound holds for $(\bar{u},\bar{s}) \in \mathcal N^\a_\Gamma\times [0,T[\ \cup\  \tilde \Omega\times \{0\}$ whenever $\eta \geq \bar{\eta}$. Indeed, if $\bar{s}=0$ then
\begin{align*}
\Psi_{{\gamma,\eta}}(\bar{u},\bar{s},\bar{x},\bar{t}) 
&\leq \tilde{\psi}(\bar{u}) - f(\bar{x},\bar{t}) \\
&= (\tilde{\psi}(\bar{u}) - \psi(\bar{u})) + (\psi(\bar{u}) - \psi(\bar{x})) + (f(\bar{x},0) - f(\bar{x},\bar{t})) \\
&\leq \normL{\psi-\tilde\psi}_{L^\infty(\tilde \Omega)} + \Lip{\psi} |\bar{x}-\bar{u}| + K\bar{t} \\
&\leq K(\eta+1) \gamma + \normL{\psi-\tilde\psi}_{L^\infty(\tilde \Omega)},
\end{align*}
where we have now invoked \eqref{lip-t-ct-1} in Theorem~\ref{lip-viscosity}. If $\bar{u} \in \mathcal N^\a_\Gamma$ and $\bar{s} > 0$, define $\hat{x} \in \Gamma$ in the projection of $\bar u$ on $\Gamma$. Thus, using \eqref{eq:lipspace} in Theorem~\ref{lip-viscosity}, we arrive at
\begin{align*}
\Psi_{{\gamma,\eta}}(\bar{u},\bar{s},\bar{x},\bar{t}) 
&\leq \tilde{\psi}(\bar{u}) - f(\bar{x},\bar{t}) \\
&= (\tilde{\psi}(\bar{u}) - \psi(\bar{u})) + (\psi(\bar{u}) - \psi(\hat{x})) + (f(\hat{x},\bar{t}) - f(\bar{x},\bar{t})) \\
&\leq \normL{\psi-\tilde\psi}_{L^\infty(\tilde \Omega)} + \Lip{\psi} |\hat{x}-\bar{u}| + K |\hat{x}-\bar{x}| \\
&\leq  \normL{\psi-\tilde\psi}_{L^\infty(\tilde \Omega)} + \Lip{\psi} \a + K(\a+\gamma) \\
&\leq K (\alpha+ \gamma) + \norm{\psi-\tilde\psi}_{L^\infty(\tilde \Omega)}.
\end{align*}
%
%
Thus, taking $\eta = \bar{\eta}$ and $(u,s) \in \tilde{\Omega}_{T}$ we have from above that
\begin{align*}
f^\e(u,s) - f(u,s) - \bar{\eta}T \leq  &\Psi_{{\gamma,\eta}}(\bar{u},\bar{s},\bar{x},\bar{t}) \\
\leq & K(\gamma + \e) + \norm{\psi-\tilde\psi}_{L^\infty(\tilde \Omega)} + K (\distH(\Gamma,\tilde \Gamma)+\a)+ K \bar{\eta} \gamma.
\end{align*}
\\
Before concluding, we look at what happens when we revert the role of $f$ and $f^\e$. In this case, our reasoning remains valid with only a few changes. The main ingredient is to redefine $\Psi_{{\gamma,\eta}}$ as follows
\[
\Psi_{\gamma,\eta}(u,s,x,t) = f(x,t) - f^\e(u,s) - \frac{|x-u|^2}{2\gamma} -\frac{|t-s|^2}{2\gamma} - \eta t.
\]
Then all our bounds remain true, and with even  simpler arguments\footnote{This is the case for the analogous version of \eqref{ineq-2-ct} which will be derived by a simple triangle inequality (see also \eqref{ineq-2}). This asymmetry in the proofs when reverting the roles of $f^\e$ and $f$ is intriguing but not surprising.}. We leave the details to the reader for the sake of brevity.

Overall, we have shown that
\[ 
|f^\e(u,s) - f(u,s)| \leq  K(\gamma + \e) + \norm{\psi-\tilde\psi}_{L^\infty(\tilde \Omega)} + K( \distH(\Gamma,\tilde \Gamma)+\a)+ \bar{\eta} (\gamma+T).
\]
With the optimal choice $\gamma = \e^{1/2}$, taking the supremum over $(u,s)$ and after rearrangement, we get
\begin{align*}
\normL{f^\e-f}_{L^\infty(\tilde{\Omega} \times  [0,T[)} \leq&  K\pa{(T+1)\e^{\min(\nu,1/2)} + \e} + K (T+\e^{1/2})\normL{P - \tilde{P}}_{L^\infty(\tilde{\Omega} \setminus \tilde{\Gamma})} \\
&+ \normL{\psi - \tilde{\psi}}_{L^\infty(\tilde{\Omega})}+ K( \distH(\Gamma,\tilde \Gamma)+\a),
\end{align*}
which is the claimed bound since $\e \in ]0,1]$ and $\a=\e^{1/2}$.
\end{enumerate}
\end{proof}

\subsection{Forward Euler discrete time non-local to local error bound}\label{subsec:eikconvdiscrete}
We consider the time-discrete approximation of \eqref{cauchy-J} using forward Euler discretization. Then we will show an error estimate between a solution of this equation with the continuous viscosity solution of \eqref{cauchy}.

Using the forward/explicit Euler discretization scheme, a time-discrete counterpart of \eqref{cauchy-J} reads
\begin{equation}\tag{\textrm{$\mathcal{P}_{\e}^{\rm{FD}}$}}\label{cauchy-J-discrete-fw}
\begin{cases}
\frac{f^\e(u,t) - f^\e(u,t-\Delta t)}{\Delta t} = - \abs{ \nabla_{J_\e}^- f^\e(u,t-\Delta t)}_\infty + \tilde{P}(u),  & (u,t) \in   (\tilde{\Omega} \setminus \tilde{\Gamma}) \times \set{t_1,\ldots,t_{N_T}} ,\\
f^\e(u,t) = \tilde{\psi} (u), & (u,t) \in \partial\tilde{\Omega}_{N_T} ,
\end{cases}
\end{equation}
where $t_{i} = i \Delta t$ for all $i \in \set{0,\ldots,N_T}$. 

In Appendix~\ref{sec:cauchy-J-discrete}, we prove that \eqref{cauchy-J-discrete-fw} is well-posed. Indeed, Lemma~\ref{lem-existence-lip-fw} shows existence and regularity of a discrete-time solution (in the sense of Definition~\ref{def:discretesolution}). Uniqueness follows from the comparison principle in Lemma~\ref{lem-comparison}.\\


We are now in position to state the following error estimate.

\begin{thm}\label{thm:discrete-fw-estimate}
Let $T>0$, $\e_0=\min(1/ {(2r_g)^2},1)$ and $\e \in ]0,\e_0]$. Suppose that assumptions~\ref{assum:Om}--\ref{assum:compatdomains} hold, and that  $\distH(\Gamma,\tilde \Gamma) = O(\e^{\min(\nu,1/2)})$. Let $f$ be the unique viscosity solution of \eqref{cauchy} and $f^\e$ be a solution of \eqref{cauchy-J-discrete-fw}. Assume also that
\begin{eqnarray}\label{cond-delta-t}
0 < \Delta t \leq \frac{\e C_g}{\sup_{t \in \R_+} g(t)}.
\end{eqnarray} 
Then there exists a constant $K > 0$ depending only on the dimension $m$, $\norm{\psi}_{L^\infty(\Omega)}$, $\norm{P}_{L^\infty(\Omega\setminus\Gamma)}$, $\Lip{\psi}$, $\Lip{\tilde{\psi}}$, $\Lip{P}$, $\Lip{\tilde{P}}$, $L_{g}$, $C_g$ and $c_g$ such that for any $\e$ small enough
\begin{align*}
\normL{f^\e-f}_{L^\infty\pa{\tilde{\Omega} \times \set{0,\ldots,t_{N_T}}}}  
\leq& K(T+1)\pa{\e^{\min(\nu,1/2)} + \Delta t^{1/2} + \frac{\Delta t}{\e}
+ \normL{P - \tilde{P}}_{L^\infty(\tilde{\Omega}\setminus\tilde{\Gamma})}} \\
&+ \normL{\psi - \tilde{\psi}}_{L^\infty(\tilde{\Omega})}.
\end{align*}
In particular, if $\tilde{P}=P$ on $\tilde{\Omega}\setminus\tilde{\Gamma}$ and $\tilde{\psi}=\psi$ on $\tilde{\Omega}$, then for $\Delta t=o(\e)$, we have
\begin{align*}
\lim_{\e \to 0, \Delta t \to 0} \normL{f^\e-f}_{L^\infty\pa{\tilde{\Omega} \times \set{0,\ldots,t_{N_T}}}}  = 0 .
\end{align*}
The fastest convergence rate in $\e$ is then achieved when $\Delta t = O(\e^{3/2})$ and $\nu=1/2$ provided that $\distH(\Omega,\tilde \Omega) = O(\e^{3/2})$ and $\distH(\Gamma,\tilde \Gamma) = O(\e^{1/2})$. 
\end{thm}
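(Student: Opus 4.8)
The plan is to mirror, almost verbatim, the doubling-of-variables argument of Theorem~\ref{thm:continuous-time-estimate}, the only genuinely new feature being the treatment of the forward Euler time increment. Throughout I rely on the fact that, under the CFL restriction \eqref{cond-delta-t}, the scheme \eqref{cauchy-J-discrete-fw} is well-posed and its solution inherits the uniform-in-time regularity $\Lip{f^\e(u,\cdot)} \le L$ and the global spatial estimate \eqref{eq:globlip-space-J} (the discrete counterparts of Theorem~\ref{lip-viscosity-J} and Lemma~\ref{lip-viscosity-J-1}, proved in the appendix via the discrete comparison principle of Lemma~\ref{lem-comparison} and the existence result Lemma~\ref{lem-existence-lip-fw}). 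First I would keep $f$ in continuous time and $f^\e$ on the grid, and maximize over $\tilde{\Omega} \times \set{0,\ldots,t_{N_T}} \times \Omega_T$ the same test-function $\Psi_{\gamma,\eta}(u,s,x,t) = f^\e(u,s) - f(x,t) - \frac{|x-u|^2}{2\gamma} - \frac{|t-s|^2}{2\gamma} - \eta s$, obtaining a maximizer $(\bar u,\bar s,\bar x,\bar t)$. Exactly as for \eqref{lip-pro-x-ct}--\eqref{lip-pro-t-ct}, the space/time regularity of $f$ (Theorem~\ref{lip-viscosity}) and of $f^\e$ gives $|\bar x - \bar u| \le K\gamma$ and $|\bar t - \bar s| \le (K+\eta)\gamma$.

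The heart of the matter is to replace the sub-solution inequality \eqref{sub-ineq} by a discrete one. Assuming the interior case $\bar s \ge t_1$ (the case $\bar s = 0$ falls into the initial-boundary analysis), I would exploit that $s \mapsto \Psi_{\gamma,\eta}(\bar u,s,\bar x,\bar t)$ is maximal at $\bar s$ by comparing it with the admissible grid value $\bar s - \Delta t$; expanding $|\bar t - \bar s + \Delta t|^2 - |\bar t - \bar s|^2$ yields
\[
\frac{f^\e(\bar u,\bar s) - f^\e(\bar u, \bar s - \Delta t)}{\Delta t} \ge \frac{\bar s - \bar t}{\gamma} - \frac{\Delta t}{2\gamma} + \eta .
\]
Substituting the scheme equation \eqref{cauchy-J-discrete-fw} for the left-hand side produces the extra penalty term $\frac{\Delta t}{2\gamma}$. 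The non-local operator then appears evaluated at the lagged time $\bar s - \Delta t$, whereas the maximality of $\Psi_{\gamma,\eta}$ only controls spatial increments of $f^\e$ at time $\bar s$; I would bridge this gap with the time-Lipschitz bound together with $\sup_{v}J_\e(\bar u,v) \le \bpa{\sup_{t \in \R_+}g(t)}/(\e C_g)$, which gives $\babs{\nabla_{J_\e}^- f^\e(\bar u,\bar s - \Delta t)}_\infty \ge \babs{\nabla_{J_\e}^- f^\e(\bar u,\bar s)}_\infty - K\Delta t/\e$. This is precisely where the $\Delta t/\e$ contribution enters, and it is the term whose boundedness is guaranteed by the CFL condition \eqref{cond-delta-t}. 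The outcome is the discrete analogue of \eqref{sub-ineq}, carrying two additional right-hand side terms, $\frac{\Delta t}{2\gamma} + K\Delta t/\e$.

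From here the argument is that of Theorem~\ref{thm:continuous-time-estimate}. The super-solution inequality \eqref{super-ineq} for the continuous solution $f$ is unchanged, and the decomposition of $\babs{\nabla_{J_\e}^- f^\e(\bar u,\bar s)}_\infty$ into the terms $\mathrm{T}_1,\mathrm{T}_2,\mathrm{T}_3$ (see \eqref{ineq-2-ct}--\eqref{eq:T2}) is identical, since it concerns $f^\e$ at the fixed time $\bar s$. Propagating the two extra terms through the computation leading to \eqref{eq:etabar}, I obtain that the maximum can be interior only if $\eta < \bar\eta$ with $\bar\eta = K\bpa{\e^\nu + \e/\gamma + \gamma + \Delta t/\e} + \frac{\Delta t}{2\gamma} + \normL{P-\tilde P}_{L^\infty(\tilde\Omega\setminus\tilde\Gamma)}$; hence for $\eta \ge \bar\eta$ the maximizer lies on the parabolic boundary, where the concluding estimates (using \eqref{eq:globlip-space-J}, the hypothesis $\distH(\Gamma,\tilde\Gamma) = O(\e^{\min(\nu,1/2)})$, and $\a = \e^{1/2}$) apply verbatim, and reverting the roles of $f$ and $f^\e$ removes the absolute value. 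Choosing finally $\gamma = \e^{1/2}$ and invoking the elementary bound $\frac{\Delta t}{2\e^{1/2}} \le \frac{1}{2}\bpa{\Delta t^{1/2} + \Delta t/\e}$ (valid since $\Delta t \le 1$) converts the penalty term into the announced $\Delta t^{1/2} + \Delta t/\e$ and yields the stated estimate; the $\e^{\min(\nu,1/2)}$, $\normL{P-\tilde P}_{L^\infty}$ and $\normL{\psi-\tilde\psi}_{L^\infty}$ contributions are carried over unchanged from the continuous-time proof.

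I expect the main obstacle to be this second step: deriving the discrete sub-solution inequality with the correct sign and the correct $\Delta t$-order terms, and in particular controlling the time lag of the non-local operator between $\bar s - \Delta t$ and $\bar s$. Everything else is a careful but essentially routine transcription of the continuous-time proof, resting on the appendix results (existence, uniform-in-time Lipschitz regularity, and the discrete comparison principle) that the CFL condition \eqref{cond-delta-t} guarantees.
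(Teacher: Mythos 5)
Your proposal is correct and takes essentially the same approach as the paper's proof: the same doubling-of-variables test function, the same comparison of the maximizer with the lagged grid time $\bar s - \Delta t$ producing the $\Delta t/(2\gamma)$ penalty, the same use of the discrete time-Lipschitz bound from Lemma~\ref{lem-existence-lip-fw} together with $\sup_v J_\e(\bar u,v) \le \sup_{t}g(t)/(\e C_g)$ to absorb the time lag of the non-local operator into a $K\Delta t/\e$ term, and the same reliance on the appendix results under the CFL condition \eqref{cond-delta-t}. The only cosmetic difference is that you write out the $f^\e - f$ direction (reusing the $\mathrm{T}_1,\mathrm{T}_2,\mathrm{T}_3$ lower-bound machinery of Theorem~\ref{thm:continuous-time-estimate}) whereas the paper writes out the complementary $f - f^\e$ direction (where a simple triangle-inequality upper bound suffices) and invokes symmetry for yours.
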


\begin{rem}
It is worth noting that \eqref{cond-delta-t} is a CFL condition; \tcb{see \cite{de2013courant} for a recent overview.} Obviously, we have $\sup_{t \in \R_+} g(t) < +\infty$ under our assumptions since $g$ is a continuous function on its compact support. It is also possible to discretize \eqref{cauchy-J}  in time using a backward/implicit Euler scheme:
\begin{equation}\tag{\textrm{$\mathcal{P}_{\e}^{\rm{BD}}$}}\label{cauchy-J-discrete-bw}
\begin{cases}
\frac{f^\e(u,t) - f^\e(u,t-\Delta t)}{\Delta t} = - \abs{ \nabla_{J_\e}^- f^\e(u,t)}_\infty + \tilde{P}(u),  & (u,t) \in   (\tilde{\Omega} \setminus \tilde{\Gamma}) \times \set{t_1,\ldots,t_{N_T}} ,\\
f^\e(u,t) = \tilde{\psi} (u), & (u,t) \in \partial\tilde{\Omega}_{N_T} ,
\end{cases}
\end{equation}
In that case, the CFL condition \eqref{cond-delta-t} is not required anymore and we can prove the following error estimate
\[
\normL{f^\e-f}_{L^\infty\pa{\tilde{\Omega} \times \set{0,\ldots,t_{N_T}}}}  
\leq K(T+1)\pa{\e^{\min(\nu,1/2)} + \Delta t^{1/2}
+ \normL{P - \tilde{P}}_{L^\infty(\tilde{\Omega}\setminus\tilde{\Gamma})} }
+ \normL{\psi - \tilde{\psi}}_{L^\infty(\tilde{\Omega})}.
\]
The proof is completely similar to the one of Theorem \ref{thm:discrete-fw-estimate}, the only difference is that $\bar{\eta}$ in the proof will be independent of the term $\Delta t / \e$. It is worth noting that in the backward Euler scheme, the difficulty is to construct a discrete solution. This is done for the reader's convenience in Lemma~\ref{discrete-existence} in the appendix.
\end{rem}
The proof of Theorem  \ref{thm:discrete-fw-estimate} is quite similar to the one of Theorem~\ref{thm:continuous-time-estimate}. We highlight only the steps where we have to handle properly the discrete time approximation. For instance, we will need the Lipschitz regularity properties of $f^\e$ both in time and space (see Lemma~\ref{lem-existence-lip-fw}).

\begin{proof}
Again, $K$ will denote in this proof any positive constant that depends only on the data, but may change from one line to another. Here we focus on the case $f-f^\e$ on purpose to complement the details provided in the proof Theorem~\ref{thm:continuous-time-estimate}. 

\begin{enumerate}[label=Step~\arabic*.]
\item {\em Test-function and maximum point.} 

For $\gamma > 0$ and $\eta > 0$, we consider maximizing over $\Omega_T \times \tilde{\Omega}_{N_T}$ the test-function
\begin{eqnarray*}\label{varphi}
\Psi_{\gamma,\eta}(x,t,u,s)= f(x,t) - f^\e(u,s) - \frac{|x-u|^2}{2\gamma} -\frac{|t-s|^2}{2\gamma} - \eta t.
\end{eqnarray*}

Since $\Omega_T \times \tilde{\Omega}_{N_T}$ is compact and $\Psi_{\gamma,\eta}$ is continuous, the maximum is attained at some point $(\bar{x},\bar{t}, \bar{u},\bar{t}_i)$. Exactly as in the proof of Theorem~\ref{thm:continuous-time-estimate}, we have
\begin{align}
|\bar{x}-\bar{u}| &\leq K \gamma \label{lip-pro-x} \qandq \\
|\bar{t} - \bar{t}_i| &\leq (K+\eta) \gamma. \label{lip-pro-t}
\end{align}

\item {\em Excluding interior points from the maximum.}

\noindent We show that for $\eta$ large enough, we have either $(\bar{x},\bar{t}) \in \partial \Omega_T$ or $(\bar{u},\bar{t}_i) \in \partial \tilde{\Omega}_{N_T}$. We argue again by contradiction and assume that $(\bar{x},\bar{t}) \in  \Omega\setminus \Gamma \times ]0,T[$  and $(\bar{u},\bar{t}_i) \in   \tilde{\Omega}\setminus \tilde \Gamma \times \{t_1,\dots, t_{N_T}\}$. Using that $(\bar{x},\bar{t})$ is a maximum point of the function $(x,t) \mapsto \Psi_{\gamma,\eta}(x,t, \bar{u},\bar{t}_i) $ and the fact that $f$ is a  viscosity sub-solution of \eqref{cauchy}, we have
\begin{eqnarray}\label{1st-ineq}
\eta + \frac{\bar{t} - \bar{t}_i}{\gamma} \leq - \frac{|\bar{x}-\bar{u}|}{\gamma} + P(\bar{x}).
\end{eqnarray}

Using now that $\bar{t}_i > 0$ and that $f^\e$ is a solution of \eqref{cauchy-J-discrete-fw}, we have
\begin{equation}\label{2nd-ineq}
\frac{f^\e(\bar{u},\bar{t}_i) - f^\e(\bar{u},\bar{t}_i-\Delta t)}{\Delta t} = - \babs{\nabla_{J_\e}^- f^\e(\bar{u},\bar{t}_i-\Delta t)}_\infty + \tilde{P}(\bar{u}).
\end{equation}
We set $\varphi: (u,s) \in \tilde{\Omega}_{N_T} \mapsto f(\bar x,\bar t) - \frac{|\bar{x}-u|^2}{2\gamma} -\frac{|\bar{t}-s|^2}{2\gamma} - \eta \bar{t}$. In particular, $(\bar{u},\bar{t}_i)$ is the minimum point of $f^\e-\varphi$ over $\tilde{\Omega}_{N_T}$. This implies that

\begin{eqnarray*}
f^\e(\bar{u},\bar{t}_i) - f^\e(\bar{u},\bar{t}_i- \Delta t) \leq \varphi (\bar{u},\bar{t}_i) -\varphi(\bar{u},\bar{t}_i - \Delta t),
\end{eqnarray*}

and so
\begin{eqnarray}\label{ineq-1}
\frac{f^\e(\bar{u},\bar{t}_i) - f^\e(\bar{u},\bar{t}_i- \Delta t) }{\Delta t} \leq \frac{\bar{t} - \bar{t}_i}{\gamma} + \frac{\Delta t}{2 \gamma}.
\end{eqnarray}
$(\bar{x},\bar{t},\bar{u},\bar{t}_i)$ is a maximizer of $\Psi_{\gamma,\eta}$, whence we get 
\[
f^\e(\bar{u},\bar{t}_i) - f^\e(v,\bar{t}_i) \leq \frac {|\bar{x} - v|^2 - |\bar{x} - \bar{u}|^2}{2\g}, \qquad \forall v \in \tilde{\Omega} .
\]
Thus we estimate the right hand side of \eqref{2nd-ineq} to show that
\begin{equation}\label{ineq-2}
\begin{aligned}
\babs{\nabla_{J_\e}^- f^\e(\bar{u},\bar{t}_i-\Delta t)}_\infty
&=  \max_{v\in\tilde{\Omega},|\bar{u}-v| \in \e\Sg}J_\e(\bar{u},v) (f^\e(\bar{u},\bar{t}_i-\Delta t) -f^\e(v,\bar{t}_i-\Delta t)) \\
&= \max_{v\in\tilde{\Omega},|\bar{u}-v| \in \e\Sg}J_\e(\bar{u},v) (f^\e(\bar{u},\bar{t}_i-\Delta t) - f^\e(\bar{u},\bar{t}_i) + f^\e(\bar{u},\bar{t}_i) - f^\e(v,\bar{t}_i) \\
&+ f^\e(v,\bar{t}_i) - f^\e(v,\bar{t}_i-\Delta t))\\
&\leq \max_{v\in\tilde{\Omega},|\bar{u}-v| \in \e\Sg} J_\e(\bar{u},v) (K \Delta t + f^\e(\bar{u},\bar{t}_i) -f^\e(v,\bar{t}_i)) \\
&\leq \max_{v\in\tilde{\Omega},|\bar{u}-v| \in \e\Sg} J_\e(\bar{u},v)\pa{ K \Delta t +(2\gamma)^{-1}\left( |\bar{x}-v|^2 - |\bar{x}-\bar{u}|^2} \right) \\
&\leq K\Delta t \max_{v\in\tilde{\Omega},|\bar{u}-v| \in \e\Sg} J_\e(\bar{u},v) \\
& + (2\gamma)^{-1}\max_{v\in\tilde{\Omega},|\bar{u}-v| \in \e\Sg} J_\e(\bar{u},v)\bpa{|\bar{x}-v| - |\bar{x}-\bar{u}|}\bpa{|\bar{x}-v| - |\bar{x}-\bar{u}| + 2 |\bar{x}-\bar{u}|}  \\
&\leq K \frac{ \Delta t}\e \sup_{t \in \R_+}g(t)
+ (2\gamma)^{-1}\max_{v\in\tilde{\Omega},|\bar{u}-v| \in \e\Sg} J_\e(\bar{u},v){|\bar{u}-v|}\bpa{|\bar{u}-v| + 2 |\bar{x}-\bar{u}|}  \\
&\leq  K\frac{\Delta t}{\e} + \max_{v\in\tilde{\Omega},|\bar{u}-v| \in \e\Sg} \frac{|\bar{x}-\bar{u}|}{\gamma} \frac{|\bar{u}-v|}{C_g \e} g\pa{\frac{|\bar{u}-v|}{\e}} \\
&+ \max_{v\in\tilde{\Omega},|\bar{u}-v| \in \e\Sg} \frac{|\bar{u}-v|^2}{2\gamma C_g \e} g\pa{\frac{|\bar{u}-v|}{\e}} \\
&\leq  K\frac{\Delta t}{\e} + \frac{|\bar{x}-\bar{u}|}{\gamma} + r_g\frac{\e}{2\gamma} .
\end{aligned}
\end{equation}


Plugging \eqref{ineq-1} and \eqref{ineq-2} into \eqref{2nd-ineq} we get

\begin{eqnarray}\label{3rd-ineq}
\frac{\bar{t} - \bar{t}_i}{\gamma} + \frac{\Delta t}{2 \gamma} \geq  - K\frac{\Delta t}{\e} -  \frac{|\bar{x}-\bar{u}|}{\gamma} - K\frac{\e}{\gamma} + \tilde{P}(\bar{u}).
\end{eqnarray}

From \eqref{1st-ineq} and \eqref{3rd-ineq}, we finally obtain

\begin{eqnarray*}
\eta &\leq& K \pa{\frac{\Delta t + \e}{ \gamma}  + \frac{\Delta t}{\e}} + P(\bar{x}) - \tilde{P}(\bar{u})\\
&\leq&   K \pa{\frac{\Delta t + \e}{ \gamma}  + \frac{\Delta t}{\e}} + K |\bar{x}-\bar{u}| + \normL{P - \tilde{P}}_{L^\infty(\tilde{\Omega} \setminus \tilde{\Gamma})} \\
&<& K \pa{\frac{\Delta t + \e}{ \gamma}  + \gamma + \frac{\Delta t}{\e}}+\normL{P - \tilde{P}}_{L^\infty(\tilde{\Omega} \setminus \tilde{\Gamma})} \eqdef \bar{\eta}.
\end{eqnarray*}
We then conclude that either $(\bar{x},\bar{t}) \in \partial \Omega_T$ or $(\bar{u},\bar{t}_i) \in \partial \tilde{\Omega}_{N_T}$ for $\eta\ge \bar \eta$. When reverting the roles of $f^\e$ and $f$, only $\bar{\eta}$ will be changed taking the additional terms $\e^\nu$ and $\a$ (see the proof of Theorem~\ref{thm:continuous-time-estimate}). The rest of the proof is exactly the same as \ref{step:proofconclusion} in the proof of Theorem~\ref{subsec:eikconvcont}, where we now invoke Lemma~\ref{lem-existence-lip-fw}.
\end{enumerate}
\end{proof}


\section{Application to graph sequences}\label{sec:eikconvgraphs}

Let $G_n=(V_n,w_n)$ be a finite weighted graph with non-negative edge weights $w_n$. Here $V_n$ is the set of $n$ vertices/nodes $\set{u_1,\ldots,u_n} \subset \Omega$, $E_n \subset V_n^2$ is the set of edges, and the weights $w_n$ are given by the kernel $J$ at scale $\e_n$, i.e., $w_n(u_i,v_j) = J_{\e_n}(u_i,v_j)$.

Let $\Gamma_n \subset V_n$. For a time interval $[0,T[$ and $N_T \in \N$, \tcb{we use the shorthand notation $(V_n\setminus \Gamma_n)_{N_T}=(V_n \setminus \Gamma_n) \times \set{t_1,\ldots,t_{N_T}}$ and $\partial(V_n)_{N_T}=(\Gamma_n \times \set{t_1,\ldots,t_{N_T}}) \cup V_n \times \set{0} $.} We now consider the fully discretized Eikonal equation on $G_n$ with a forward Euler time-discretization as
\begin{equation}\tag{\textrm{$\mathcal{P}_{G_n}^{\rm{FD}}$}}\label{cauchy-graph-fw}
\begin{cases}
\frac{f^{n}(u,t) - f^{n}(u,t-\Delta t)}{\Delta t} = - \abs{\nabla_{w_n}^- f^{n}(u,t-\Delta t)}_\infty + \tilde{P}(u),  &\tcb{ (u,t) \in  (V_n\setminus \Gamma_n)_{N_T} } ,\\
f^{n}(u,t) = \tilde{\psi}(u), & \tcb{ (u,t) \in \partial(V_n)_{N_T} },
\end{cases}
\end{equation}
where $t_{i} = i \Delta t$ for all $i \in \set{0,\ldots,N_T}$.

In the notation of \eqref{cauchy-J-discrete-fw}, it is easy to identify $V_n$ with $\tilde{\Omega}$ and $\Gamma_n$ with $\tilde{\Gamma}$. Our aim in this section is to establish consistency of solutions to \eqref{cauchy-graph-fw} as $n \to +\infty$ and $\Delta t \to 0$. \\

In practice, we do not have that much control over the way the vertices $V_n$ in the graph are constructed; the precise configuration of points may not be known, or the points can be obtained by sampling through an acquisition device (e.g., point clouds), or given from a learning or modeling process (e.g., images). It then appears more realistic to consider graphs $G_n$ on random point configurations $V_n$, and then conveniently estimate the probability of achieving a prescribed level of consistency as a function of $n$. 

Towards this goal, we will consider a random graph model whose nodes are latent random variables independently and identically sampled on $\Omega$. This random graph model is inspired from \cite{Bollobas07} and is quite standard. More precisely, we construct $V_n$ and the boundary $\Gamma_n$ as follows:
\begin{defn}\label{def:randgraph}
Given a probability measure $\mu$ over $\Omega$ and $\e_n > 0$:
\begin{enumerate}
\item draw the vertices in $V_n$ as a sequence of independent and identically distributed variables $\pa{u_i}_{i=1}^n$ taking values in $\Omega$ and whose common distribution is $\mu$;
\item set $\Gamma_n = \set{u_i \in V_n:~ d(u_i,\Gamma) \leq a\e_n^{1+\nu}/(2\sqrt{m})}$, $\nu > 0$.
\end{enumerate}
\end{defn}
From now on, we assume that \\

\noindent\fbox{\parbox{0.975\textwidth}{
\begin{enumerate}[label=({\textbf{H.\arabic*}}),itemindent=5ex,start=13]
\item $\mu$ has a density $\rho$ on $\Omega$ with respect to the volume measure, and $\inf_{\Omega} \rho > 0$. \label{assum:densitylbd}
\end{enumerate}}}
{~}\\

A typical example is that of the uniform probability distribution on $\Omega$, in which case $\rho(u)=(\int_\Omega \dvol(x))^{-1}$ for $u \in \Omega$, where $\dvol$ is the volume measure. Though we will focus on this setting, our results can be extended following the developments hereafter to other sampling models, in particular those adapted to the manifold geometry, in which case the covering arguments that we will use will be done with geodesic balls. We will not elaborate more on this in this paper. We observe in passing that by construction, $V_n$ and $\Gamma_n$ are compact sets, and that $V_n \setminus \Gamma_n \subset \Omega \setminus \Gamma$.\\ 
Before stating the main result of this section, the following lemma gives a proper choice of $\e_n$ for which the construction of Definition~\ref{def:randgraph} ensures that the key assumption \ref{assum:compatdomains} is in force together with  $\Gamma_n \neq \emptyset$ and $\distH(\Gamma,\Gamma_n) = O(\e_n^{1+\nu})$ with high probability. To lighten notation, we define the event
\begin{equation}\label{eq:eventEn}
\En = \set{\text{\ref{assum:compatdomains} holds} \qandq \distH(\Gamma,\Gamma_n) \leq a\e_n^{1+\nu}/(2\sqrt{m})} .
\end{equation}

\begin{lem}\label{lem:compatassum-graph}
Let $V_n$ and $\Gamma_n$ generated according to Definition~\ref{def:randgraph} where $\mu$ satisfies \ref{assum:densitylbd}. Then, there exists two constants $K_1 > 0$ and $K_2 > 0$ that depend only on $m$, $a$ and $\diam(\Omega)$, and for any $\tau > 0$ there exists $n(\tau) \in \N$ such that for $n \geq n(\tau)$, taking

\begin{equation}\label{eq:epsn}
\e_n^{1+\nu} = K_1(1+\tau)^{1/m}\pa{\frac{\log n}{n}}^{1/m} ,
\end{equation}
the event $\En$ in \eqref{eq:eventEn} holds with probability at least $1 - K_2 n^{-\tau}$.
\end{lem}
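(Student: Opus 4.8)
The plan is to read $\En$ as the conjunction of two covering (``net'') events for the i.i.d.\ sample $V_n = (u_i)_{i=1}^n$, and to control each by a covering argument combined with a union bound over the draws. Write $\delta = a\e_n^{1+\nu}/(4\sqrt m)$ and $r_n = a\e_n^{1+\nu}/(2\sqrt m) = 2\delta$. Under the identification of $\tilde\Omega$ with $V_n$, assumption \ref{assum:compatdomains} reads $\max_{x\in\Omega} d(x,V_n) \le \delta$, i.e.\ that $V_n$ is a $\delta$-net of $\Omega$. For the Hausdorff bound, the inequality $\sup_{u\in\Gamma_n} d(u,\Gamma)\le r_n$ is automatic from the definition of $\Gamma_n$; hence $\distH(\Gamma,\Gamma_n)\le r_n$ reduces to $\sup_{y\in\Gamma} d(y,\Gamma_n)\le r_n$, and by the same definition this holds as soon as every $y\in\Gamma$ carries a sample point within distance $r_n/2$, since such a sample point automatically lies in $\Gamma_n$. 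Thus I reduce $\En$ to: (i) $V_n$ is a $\delta$-net of $\Omega$; and (ii) every point of $\Gamma$ has a sample within $r_n/2$, which moreover forces $\Gamma_n\neq\emptyset$ because $\partial\Omega\subseteq\Gamma$ gives $\Gamma\neq\emptyset$.

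For (i), I would take a minimal covering of $\Omega$ by balls $\ball_{\delta/2}(z_j)$, $z_j\in\Omega$, $1\le j\le N$, with $N\le K(\diam(\Omega)/\delta)^m$ from the standard covering-number estimate for a bounded set. If each such ball contains a sample point, the triangle inequality yields $\max_{x\in\Omega} d(x,V_n)\le \delta$. The probability that a fixed ball is empty is $(1-\mu(\ball_{\delta/2}(z_j)))^n \le e^{-n\mu(\ball_{\delta/2}(z_j))}$. Using \ref{assum:densitylbd} together with a lower volume bound $\vol(\ball_r(z)\cap\Omega)\ge c\,r^m$ valid for $z\in\Omega$ and $r$ small, one gets $\mu(\ball_{\delta/2}(z_j))\ge \kappa\,\delta^m$ with $\kappa = \kappa(m,\inf_\Omega\rho,\Omega)$. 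A union bound then gives $\mathbb{P}(\text{(i) fails})\le N e^{-n\kappa\delta^m}$. For (ii) I would cover $\Gamma$ by balls $\ball_{r_n/4}(\zeta_k)$, $\zeta_k\in\Gamma$, and argue identically: if each carries a sample point, then every $y\in\Gamma$ admits a sample within $r_n/2$ whose distance to $\Gamma$ is at most $r_n/4\le r_n$, hence a point of $\Gamma_n$ within $r_n$ of $y$.

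It then remains to insert the prescribed scaling. With $\e_n^{1+\nu}=K_1(1+\tau)^{1/m}(\log n/n)^{1/m}$ one has $\delta^m = C K_1^m(1+\tau)(\log n)/n$ for an explicit $C=C(m,a)$, so that $n\kappa\delta^m = \kappa C K_1^m(1+\tau)\log n$, giving $e^{-n\kappa\delta^m}=n^{-\kappa C K_1^m(1+\tau)}$ while $\delta^{-m}=n/(C K_1^m(1+\tau)\log n)$. Hence $\mathbb{P}(\text{(i) fails})\le K'\,n^{1-\kappa C K_1^m(1+\tau)}/\log n$, and choosing $K_1$ so that $\kappa C K_1^m\ge 1$ makes the exponent at most $-\tau$, whence $\mathbb{P}(\text{(i) fails})\le \tfrac12 K_2 n^{-\tau}$. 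The same computation, with $r_n=2\delta$ and $\diam(\Gamma)$ in place of $\diam(\Omega)$, controls (ii) after possibly enlarging $K_1$. Summing the two failure probabilities gives the claim, and the threshold $n(\tau)$ is introduced so that for $n\ge n(\tau)$ the radii $\delta,r_n$ (which depend on $\tau$ through $\e_n$) are small enough for both the covering estimates and the volume lower bound to apply.

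The main obstacle is the uniform lower volume bound $\vol(\ball_r(z)\cap\Omega)\ge c\,r^m$ for all $z\in\Omega$ and all small $r$: for centers $z\in\partial\Omega$ this is not automatic for an arbitrary compact set and requires mild regularity of $\Omega$ (an interior cone / corkscrew condition), which is guaranteed here by the smoothness already placed on $\Omega$ and $\Gamma$ (see Appendix~\ref{sec:distsmooth}). The remaining points are purely bookkeeping: selecting a single $K_1$ that serves (i) and (ii) simultaneously, and tracking that the constant $\kappa$ genuinely depends on $\inf_\Omega\rho$ and the geometry of $\Omega$, which under the normalized sampling model are subsumed into the dependence on $m$, $a$ and $\diam(\Omega)$.
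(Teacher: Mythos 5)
Your proposal is correct and follows essentially the same route as the paper's own proof: a finite net of $\Omega$ (and of $\Gamma$), the bound $(1-p)^n\le e^{-np}$ for the probability that a ball of the net contains no sample, the density lower bound \ref{assum:densitylbd} to get $p\gtrsim \delta^m$, a union bound over the net together with a standard covering-number estimate, and the scaling $\delta^m\sim (1+\tau)\log n/n$ to make the failure probability $O(n^{-\tau})$. The only substantive difference is that you explicitly flag and handle the boundary effect (balls centered at points of $\Omega$ near $\partial\Omega$ need a uniform volume-regularity/corkscrew bound $\vol(\ball_r(z)\cap\Omega)\ge c\,r^m$), a point the paper's proof silently glosses over by writing $\mu(\ball_\delta(x_j))\ge c\,\delta^m\vol(\ball_\delta(0))$ as if each ball of the net were entirely contained in $\Omega$ — so on this point your argument is in fact the more careful of the two.
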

See Appendix~\ref{sec:prooflemcompatassum-graph} for the proof.\\


\noindent
We are now ready to establish a quantified version of uniform convergence in probability of $f^n$ towards $f$.

\begin{thm}\label{thm:graph-bw-estimate}
Let $T, \nu >0$, and $V_n$ and $\Gamma_n$ be constructed according to Definition~\ref{def:randgraph} where $\mu$ satisfies \ref{assum:densitylbd}. Suppose that assumptions~\ref{assum:Om}--\ref{assum:psisssol-J} hold\footnote{It is clear that our assumptions \ref{assum:Om}--\ref{assum:Gam} concern only $\Omega$ and $\Gamma$ and not $V_n$ and $\Gamma_n$ which comply with \ref{assum:Om}--\ref{assum:Gam} by construction.}. Let $f$ be the unique viscosity solution of \eqref{cauchy} and $f^{n}$ be a solution of \eqref{cauchy-graph-fw}. Take $\Delta t = o(\e_n)$ where $\e_n$ is as given in \eqref{eq:epsn}. Then, the following holds.
\begin{enumerate}[label=(\roman*)]
\item There exists two constants $K > 0$ and $K_2 > 0$ that depend only on $m$, $a$, $\diam(\Omega)$, $\norm{\psi}_{L^\infty(\Omega)}$, $\norm{P}_{L^\infty(\Omega\setminus\Gamma)}$, $\Lip{\psi}$, $\Lip{\tilde{\psi}}$, $\Lip{P}$, $\Lip{\tilde{P}}$, $c_g$, $C_g$, $L_g$ and $\nu$, and for any $\tau > 0$, there exists $n(\tau) \in \N$ such that for $n \geq n(\tau)$,
\begin{align}\label{eq:bndrandom}
\normL{f^n-f}_{L^\infty\pa{V_n \times \set{0,\ldots,t_{N_T}}}}  
&\leq K(T+1)\Bigg((1+\tau)^{\frac{\min(\nu,1/2)}{(1+\nu)m}}\pa{\frac{\log n}{n}}^{\frac{\min(\nu,1/2)}{(1+\nu)m}} \\
&\qquad\qquad\qquad+ \pa{1+(1+\tau)^{\frac{1}{2(1+\nu)m}}\pa{\frac{\log n}{n}}^{\frac{1}{2(1+\nu)m}}}o(1) \Bigg) \nonumber\\
&\qquad\qquad\qquad+ K(T+1)\normL{P - \tilde{P}}_{L^\infty(V_n\setminus\Gamma_n)} 
+ \normL{\psi - \tilde{\psi}}_{L^\infty(V_n)} \nonumber.
\end{align}
with probability at least $1-K_2 n^{-\tau}$. The best convergence rate is $O\pa{\frac{\log n}{n}}^{\frac{1}{3m}}$ obtained for $\nu=1/2$ and $\Delta t=O(\e_n^{3/2})$.
\item Let $\delta_n(\tau)$ be the right hand side of \eqref{eq:bndrandom}. If $\tau > 1$, then 
\[
\Pr\pa{\text{$\normL{f^n-f}_{L^\infty\pa{V_n \times \set{0,\ldots,t_{N_T}}}} > \delta_n(\tau)$ infinitely often}} = 0 .
\]
\item Take $\Delta t=O(\e_n^{3/2})$. Assume that $\tilde{P}=P$ on $V_n\setminus\Gamma_n$ and $\tilde{\psi}=\psi$ on $\Omega_n$, then
\begin{align*}
\lim_{n \to +\infty} \normL{f^n-f}_{L^\infty\pa{V_n \times \set{0,\ldots,t_{N_T}}}}  = 0 \quad \text{almost surely}.
\end{align*}
\end{enumerate}
\end{thm}

\begin{proof}
\begin{enumerate}[label=(\roman*)]
\item To get the error bound \eqref{eq:bndrandom}, combine Theorem~\ref{thm:discrete-fw-estimate} and Lemma~\ref{lem:compatassum-graph} and observe that 
\[
\distH(\Gamma,\Gamma_n) \leq a\e_n^{1+\nu}/(2\sqrt{m}) = o\pa{\e_n^{\min(\nu,1/2)}}
\]
with the stated probability. 
 
\item We have
\[
\sum_{n \geq n(\tau)}\Pr\pa{\normL{f^n-f}_{L^\infty\pa{V_n \times \set{0,\ldots,t_{N_T}}}} > \delta_n} < K_2\sum_{n \geq n(\tau)} n^{-\tau} < +\infty ,
\]
since $\tau > 1$. The claim then follows from the (first) Borel-Cantelli lemma.

\item In this case, we have $\delta_n(\tau)=K(T+1)(1+\tau)^{\frac{1}{3m}}\pa{\frac{\log n}{n}}^{\frac{1}{3m}}$ for a possibly higher constant $K$. Define the event
\[
\An = \set{\normL{f^n-f}_{L^\infty\pa{V_n \times \set{0,\ldots,t_{N_T}}}} \leq \delta_n(\tau)} .
\]
For any $\delta > 0$, $q > 1$ and $n$ large enough, we have by the Tchebychev inequality that
\begin{align*}
&\Pr\pa{\normL{f^n-f}_{L^\infty\pa{V_n \times \set{0,\ldots,t_{N_T}}}} > \delta}
\leq \delta^{-3mq}\E\pa{\normL{f^n-f}_{L^\infty\pa{V_n \times \set{0,\ldots,t_{N_T}}}}^{3mq}} \\
&= \delta^{-3mq}\pa{\E\pa{\normL{f^n-f}_{L^\infty\pa{V_n \times \set{0,\ldots,t_{N_T}}}}^{3mq}\mathbf{1}_{\An}} 
+ \E\pa{\normL{f^n-f}_{L^\infty\pa{V_n \times \set{0,\ldots,t_{N_T}}}}^{3mq}\mathbf{1}_{\An^c}}} \\
&\leq \delta^{-3mq}\pa{\E\pa{\normL{f^n-f}_{L^\infty\pa{V_n \times \set{0,\ldots,t_{N_T}}}}^{3mq}\mathbf{1}_{\An}} 
+ \E\pa{\normL{f^n-f}_{L^\infty\pa{V_n \times \set{0,\ldots,t_{N_T}}}}^{3mq}\mathbf{1}_{\An^c}}} \\
&\leq \delta^{-3mq}\pa{\delta_n(\tau)^{3mq} + C^{3mq} \Pr\pa{\An^c}} \\
&\leq \delta^{-3mq}\pa{\delta_n(\tau)^{3mq} + K_2 C^{3mq} n^{-\tau}} \\
&\leq \delta^{-3mq} K_{\tau,T,m,q} \pa{\pa{\frac{\log n}{n}}^{q} + n^{-\tau}} ,
\end{align*}
for some constant $K_{\tau,T,m,q} > 0$, and where we used the fact that $\normL{f^n-f}_{L^\infty\pa{V_n \times \set{0,\ldots,t_{N_T}}}}$ is almost surely bounded by some constant $C > 0$. Since $q > 1$, the right-hand side is summable for any $\tau > 1$. The claim then follows using again the (first) Borel-Cantelli lemma.
\end{enumerate}
\end{proof}

\begin{rem}\label{rem:graph-bw-estimateexpect}
One can also easily derive from \eqref{eq:bndrandom} a bound in expectation. Indeed arguing as in the proof of the third claim of Theorem~\ref{thm:graph-bw-estimate}, we have
\begin{align*}
&\E\pa{\normL{f^n-f}_{L^\infty\pa{V_n \times \set{0,\ldots,t_{N_T}}}}} \\
\leq 
&K(T+1)\Bigg((1+\tau)^{\frac{\min(\nu,1/2)}{(1+\nu)m}}\pa{\frac{\log n}{n}}^{\frac{\min(\nu,1/2)}{(1+\nu)m}} 
+ \pa{1+(1+\tau)^{\frac{1}{2(1+\nu)m}}\pa{\frac{\log n}{n}}^{\frac{1}{2(1+\nu)m}}}o(1) \Bigg) \\
&+ T\normL{P - \tilde{P}}_{L^\infty(V_n\setminus\Gamma_n)} + \normL{\psi - \tilde{\psi}}_{L^\infty(V_n)} 
+ CK_2n^{-\tau},
\end{align*}
When $\tilde{P}=P$ and $\tilde{\psi}=\psi$ on $V_n\setminus\Gamma_n$ and $\Omega_n$ respectively, we again conclude that $\normL{f^n-f}_{L^\infty\pa{V_n \times \set{0,\ldots,t_{N_T}}}}$ converges to $0$ in expectation as $n \to +\infty$.
\end{rem}

\appendix 

\section{Smoothness of the distance function} 
\label{sec:distsmooth}


The regularity of the distance function to a set from that of the set itself is a classical and well understood subject. Indeed, characterizing some classes of $C^p$-smooth submanifolds of an arbitrary Hilbert space via some smoothness properties of square distance functions (or projection mappings) has been studied by many authors \cite{Poly84,Canino88,Shapiro94,Thibault19}; see also the survey \cite{Thibault10}. On $\R^m$, such results can be found in \cite{Ambrosio96}. We summarise this in the following proposition.
%

\begin{prop}
Let $p \geq 1$ be an integer and $\Gamma \in \R^m$ be a compact $C^{p+1}$-smooth submanifold without boundary. Then there is $a_0 > 0$ such that $d(\cdot,\Gamma)$ is $C^p$ on $\NeigGama \setminus \Gamma$ and $|\nabla d(x,\Gamma)|=1$ for all $x \in \NeigGama \setminus \Gamma$.
\end{prop}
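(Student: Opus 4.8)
The plan is to realize $\mathcal{N}_{\Gamma}^{a_0}$ as the diffeomorphic image of a normal disk bundle, and then to read off both the regularity of $d(\cdot,\Gamma)$ and the eikonal identity $|\nabla d|=1$ from the nearest-point projection. First I would introduce the normal bundle $N\Gamma = \set{(y,v) \in \Gamma \times \R^m : v \perp T_y\Gamma}$. Since $\Gamma$ is $C^{p+1}$, its tangent (and hence normal) spaces vary in a $C^p$ fashion, so $N\Gamma$ is an $m$-dimensional $C^p$-manifold, and the endpoint (normal exponential) map $E \colon N\Gamma \to \R^m$, $E(y,v) = y+v$, is of class $C^p$.

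Next I would show that $E$ is a local $C^p$-diffeomorphism near the zero section $\Gamma \times \set{0}$. At a point $(y,0)$, the differential $dE_{(y,0)}$ sends a tangent vector of $N\Gamma$, which splits into a horizontal part in $T_y\Gamma$ and a vertical part in $N_y\Gamma$, to the sum of these two components inside the orthogonal decomposition $\R^m = T_y\Gamma \oplus N_y\Gamma$. Thus $dE_{(y,0)}$ is a linear isomorphism, and the inverse function theorem yields a neighborhood of each $(y,0)$ on which $E$ is a $C^p$-diffeomorphism.

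The hard part will be upgrading this local statement to a \emph{uniform} tubular radius: producing a single $a_0 > 0$ such that $E$ is injective, and hence a $C^p$-diffeomorphism, on $\set{(y,v) : |v| < a_0}$ onto the open set $\mathcal{N}_{\Gamma}^{a_0}$. This is precisely positivity of the reach of $\Gamma$, which I would extract from compactness: the local diffeomorphisms give a uniform lower bound on the injectivity radius in the fiber direction, while a contradiction argument rules out distinct normal fibers meeting arbitrarily close to $\Gamma$. Concretely, if no $a_0$ worked one could take sequences $(y_k,v_k)$ and $(y_k',v_k')$ with $E(y_k,v_k)=E(y_k',v_k')$, $y_k \neq y_k'$, and $|v_k|,|v_k'| \to 0$; extracting convergent subsequences (possible since $\Gamma$ is compact) forces $y_k,y_k' \to y$ while remaining distinct near $(y,0)$, contradicting local injectivity. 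On $\mathcal{N}_{\Gamma}^{a_0}$ the nearest point is then unique, the projection $\pi = \mathrm{pr}_\Gamma \circ E^{-1}$ is single-valued and of class $C^p$, and $d(x,\Gamma) = |x - \pi(x)|$.

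Finally I would conclude the two assertions. Writing $\phi(x) = d(x,\Gamma)^2 = \min_{y \in \Gamma}|x-y|^2$ and using uniqueness of the minimizer together with first-order optimality (the vector $x-\pi(x)$ is normal to $\Gamma$ at $\pi(x)$, so that $(D\pi(x))^\top(x-\pi(x))=0$ since $D\pi(x)$ has range in $T_{\pi(x)}\Gamma$), I get $\nabla \phi(x) = 2\bpa{x-\pi(x)}$. This is $C^p$ because $\pi$ is, hence $\phi \in C^{p+1}$; and since $\phi = d^2 > 0$ on $\mathcal{N}_{\Gamma}^{a_0} \setminus \Gamma$ with $t \mapsto \sqrt{t}$ smooth away from $0$, the distance $d(\cdot,\Gamma) = \sqrt{\phi}$ is $C^p$ there (in fact $C^{p+1}$). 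Differentiating $\phi = d^2$ gives $2\, d\, \nabla d = 2\bpa{x-\pi(x)}$, so $\nabla d(x) = \pa{x-\pi(x)}/d(x)$ and therefore $|\nabla d(x,\Gamma)| = |x-\pi(x)|/d(x) = 1$ for every $x \in \mathcal{N}_{\Gamma}^{a_0} \setminus \Gamma$, as claimed.
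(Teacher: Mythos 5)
Your proof is correct, but note that the paper does not actually prove this proposition: Appendix~\ref{sec:distsmooth} states it as a summary of classical results, the ``proof'' being citations to the literature on regularity of distance functions and metric projections (Poly, Canino, Shapiro, Thibault, Ambrosio--Mantegazza). Your argument is the standard tubular-neighborhood proof underlying those references: the normal bundle of a compact $C^{p+1}$ submanifold is a $C^p$ manifold, the endpoint map $E(y,v)=y+v$ is a local $C^p$ diffeomorphism along the zero section by the inverse function theorem, compactness upgrades this to a uniform tubular radius $a_0>0$ (positivity of the reach), and then $\phi=d(\cdot,\Gamma)^2=|x-\pi(x)|^2$ with $\nabla\phi(x)=2\bpa{x-\pi(x)}$ yields both the $C^p$ (indeed $C^{p+1}$) regularity of $d(\cdot,\Gamma)$ off $\Gamma$ and the eikonal identity $|\nabla d(x,\Gamma)|=1$, consistent with the paper's remark after \ref{assum:regulariteOmega} that $d_0=1$ in this case. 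Two steps in your write-up are compressed but standard and easily filled in: surjectivity of $E$ onto $\NeigGama$ and uniqueness of the nearest point both rest on the first-order optimality condition that any nearest point $y^*$ of $x$ satisfies $x-y^*\perp T_{y^*}\Gamma$, so that $x=E(y^*,x-y^*)$ lies in the image of the open disk bundle and two distinct nearest points would contradict injectivity of $E$ there; you invoke this normality only later, when computing $\nabla\phi$, whereas it is already needed to identify $\pi$ with the metric projection. What your approach buys is a self-contained, quantitative proof that in fact gives slightly more than the statement ($C^{p+1}$ regularity of $d$ away from $\Gamma$, plus $C^p$ regularity of the projection $\pi$); what the paper's approach buys is brevity and pointers to sharper results, e.g.\ converse characterizations of $C^p$ submanifolds via smoothness of the squared distance or of the metric projection.
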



\section{Well-posedness and regularity properties of \eqref{cauchy-J-discrete-fw}} 
\label{sec:cauchy-J-discrete}

We first define the notions of discrete sub- and super-solution.
\begin{defn}[Discrete sub- and super-solution]\label{def:discretesolution}
We say that $f^\e$ is a sub-solution of \eqref{cauchy-J-discrete-fw} if for all $(u,t) \in   (\tilde \Omega \setminus\tilde  \Gamma) \times \set{t_1,\ldots,t_{N_T}}$
\[
\frac{f^\e(u,t) - f^{\e}(u,t-\Delta t)}{\Delta t} \le - \abs{\nabla_{J_\e}^- f^{\e}(u,t-\Delta t)}_\infty + \tilde{P}(u) ,
\]
and if for all $(u,t) \in \partial \tilde \Omega_{N_T}$,
\[
f^{\e}(u,t) \le \tilde{\psi}(u) .
\]
In the same way, we say that $f^\e$ is a super-solution of \eqref{cauchy-J-discrete-fw} if for all $(u,t) \in  (\tilde \Omega \setminus\tilde  \Gamma) \times \set{t_1,\ldots,t_{N_T}}$
\[
\frac{f^\e(u,t) - f^{\e}(u,t-\Delta t)}{\Delta t} \ge  - \abs{\nabla_{J_\e}^- f^{\e}(u,t-\Delta t)}_\infty + \tilde{P}(u) ,
\]
and if for all $(u,t) \in \partial \tilde \Omega_{N_T}$,
\[
f^{\e}(u,t) \ge \tilde{\psi}(u).
\]
$f^\e$ is a discrete solution of \eqref{cauchy-J-discrete-fw} if it is both a discrete sub-solution and super-solution.
\end{defn}

We start with a comparison principle, which is a direct consequence of the monotonicity.
\begin{lem}[Comparison principle for the scheme \eqref{cauchy-J-discrete-fw}]\label{lem-comparison}
Assume that \ref{assum:Om}, \ref{assum:Gam} and \ref{assum:gpos} hold, and that $f^\e,g^\e$ are respectively bounded sub- and super-solution of \eqref{cauchy-J-discrete-fw}. Assume also that the CFL condition \eqref{cond-delta-t} holds. Then
\begin{eqnarray}
\sup_{\tilde{\Omega} \times  \set{0,\ldots,t_{N_T}}}\pa{f^\e - g^\e} \leq \sup_{ \tilde{\Gamma} \times  \set{t_1,\ldots,t_{N_T}} \cup \tilde{\Omega} \times \set{0}} |f^\e - g^\e |.
\end{eqnarray}
\end{lem}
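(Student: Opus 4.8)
The plan is to rewrite the explicit scheme \eqref{cauchy-J-discrete-fw} as a one-step update map and exploit its monotonicity, which is precisely where the CFL condition \eqref{cond-delta-t} enters. For a function $\phi : \tilde{\Omega} \to \R$ and an interior node $u \in \tilde{\Omega}\setminus\tilde{\Gamma}$, introduce the update operator
\[
S[\phi](u) = \phi(u) - \Delta t \max_{v \in \tilde{\Omega}} J_\e(u,v)\bpa{\phi(u) - \phi(v)} + \Delta t\, \tilde{P}(u),
\]
where we have used the rewriting \eqref{eq:normnablaJe} of the non-local operator. Multiplying the inequalities of Definition~\ref{def:discretesolution} by $\Delta t > 0$, a sub-solution satisfies $f^\e(u,t_n) \le S[f^\e(\cdot,t_{n-1})](u)$ and a super-solution satisfies $g^\e(u,t_n) \ge S[g^\e(\cdot,t_{n-1})](u)$ at every $u \in \tilde{\Omega}\setminus\tilde{\Gamma}$ and every $n \in \set{1,\ldots,N_T}$. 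Note that since $\tilde{\Omega}$ is finite by \ref{assum:Om}, all the suprema below are maxima and the induction is finite.

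First I would record two elementary properties of $S$. The first is invariance under addition of constants, $S[\phi + c] = S[\phi] + c$ for every $c \in \R$, which is immediate because $c$ cancels in the increment $\phi(u) - \phi(v)$. The second, and crucial, property is monotonicity: if $\phi \le \tilde{\phi}$ pointwise on $\tilde{\Omega}$, then $S[\phi](u) \le S[\tilde{\phi}](u)$. Since $J_\e \ge 0$ by \ref{assum:gpos}, raising any off-diagonal value $\phi(v)$, $v \neq u$, can only decrease the term $J_\e(u,v)(\phi(u)-\phi(v))$, hence the maximum, hence increase $-\Delta t \max(\cdots)$; so $S$ is nondecreasing in the off-diagonal variables with no restriction on $\Delta t$. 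For the diagonal variable, increasing $\phi(u)$ alone by $h > 0$ raises $\max_v J_\e(u,v)(\phi(u)-\phi(v))$ by at most $h\max_v J_\e(u,v)$, so the net change of $S[\phi](u)$ is at least $h\bpa{1 - \Delta t \max_v J_\e(u,v)}$. Because $J_\e(u,v) = (\e C_g)^{-1} g(|u-v|/\e) \le (\e C_g)^{-1}\sup_{t \in \R_+} g(t)$, the CFL condition \eqref{cond-delta-t} forces $\Delta t \max_v J_\e(u,v) \le 1$, making $S$ nondecreasing in the diagonal variable as well. Full monotonicity follows by raising the coordinates one at a time.

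With these two properties, the comparison follows by induction on $n$. Write $w = f^\e - g^\e$, set $M_n = \sup_{u \in \tilde{\Omega}} w(u,t_n)$, and let $B = \sup_{\tilde{\Gamma} \times \set{t_1,\ldots,t_{N_T}} \cup \tilde{\Omega}\times\set{0}} |f^\e - g^\e|$ be the right-hand side of the claim. Since $\tilde{\Omega}\times\set{0} \subset \partial\tilde{\Omega}_{N_T}$, we have $M_0 \le B$. For $n \ge 1$ and $u \in \tilde{\Omega}\setminus\tilde{\Gamma}$, the sub-/super-solution inequalities yield $w(u,t_n) \le S[f^\e(\cdot,t_{n-1})](u) - S[g^\e(\cdot,t_{n-1})](u)$; combining $f^\e(\cdot,t_{n-1}) \le g^\e(\cdot,t_{n-1}) + M_{n-1}$ with monotonicity and constant-invariance gives
\[
w(u,t_n) \le S[g^\e(\cdot,t_{n-1}) + M_{n-1}](u) - S[g^\e(\cdot,t_{n-1})](u) = M_{n-1}.
\]
At a boundary node $u \in \tilde{\Gamma}$ one has directly $w(u,t_n) \le B$. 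Taking the supremum over $u \in \tilde{\Omega}$ gives $M_n \le \max(B, M_{n-1})$, and since $M_0 \le B$, induction yields $M_n \le B$ for all $n$; taking the supremum over $n \in \set{0,\ldots,N_T}$ is exactly the asserted inequality.

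The only genuine subtlety is the verification of diagonal monotonicity, where the maximum over $v$ must be controlled uniformly in the maximizing vertex. This is the single place where $\Delta t$, $\e$, $C_g$ and $\sup_{t\in\R_+} g(t)$ interact, and it is exactly what forces the CFL condition \eqref{cond-delta-t}; the remainder of the argument is routine bookkeeping.
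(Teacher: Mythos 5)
Your proof is correct, and it takes a genuinely different route from the paper's. You package the explicit scheme as a one-step update operator $S$ and prove comparison by forward induction in time, resting on exactly two structural facts: invariance of $S$ under addition of constants, and monotonicity of $S$ — the latter being precisely where the CFL condition \eqref{cond-delta-t} enters, since it guarantees $\Delta t\, J_\e(u,v) \le 1$ for all $u,v$ and hence monotonicity in the diagonal variable (your uniform control of the maximum via $\max_v[a_v + h b_v] \le \max_v a_v + h \max_v b_v$ is the right way to handle the possible change of maximizing vertex). The paper instead mirrors its continuous-time argument (Proposition~\ref{lem-comparison-cont}): after normalizing by constant invariance so that $f^\e \le g^\e$ on the parabolic boundary, it penalizes by $-\eta t$, takes a global space-time maximum point $(\bar u,\bar t)$ of $f^\e - g^\e - \eta t$, locates it in the interior, and derives the contradiction $0 \ge \eta\Delta t > 0$ by rewriting the scheme with the weights $1-\Delta t\, J_\e(\bar u,y)\ge 0$ and $\Delta t\, J_\e(\bar u,y)\ge 0$ — the same monotonicity you isolate, but exploited pointwise at a maximum rather than propagated by induction. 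Your approach is the classical monotone-scheme comparison argument: it is more elementary (no penalization parameter, no contradiction, no maximum-point machinery), and it yields the clean recursion $M_n \le \max(B, M_{n-1})$, which in addition makes the $\ell^\infty$-stability of the scheme explicit. What the paper's formulation buys is uniformity of presentation with the continuous-time proof, where a time-stepping induction of your kind is not available and penalization at a maximum point is the only viable mechanism.
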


\begin{proof}
Since the scheme is invariant by addition of constant, we can assume that $f^\e \leq g^\e$ on $\tilde{\Gamma} \times  \{t_1,\ldots,t_{N_T} \} \cup \tilde{\Omega} \times \{ 0 \}$, and prove that $f^\e \leq g^\e$ on $\tilde{\Omega} \times  \{0,\ldots,t_{N_T}\}$. \medskip

We argue by contradiction, and suppose that for $\eta > 0$ small enough, we have
\begin{eqnarray}
M_\eta = \sup_{(u,t) \in \tilde{\Omega} \times  \set{0,\ldots,t_{N_T}}}f^\e(u,t) - g^\e(u,t) - \eta t > 0. 
\end{eqnarray}

By upper semi-continuity of the objective and compactness of $\tilde \Omega \times \{0,\dots,t_{N_T}\}$, the supremum is actually a maximum achieved at some point $(\bar{u},\bar{t})$. Since  $f^\e \leq g^\e$ on $\tilde{\Gamma} \times \{t_1,\ldots,t_{N_T}\} \cup \tilde{\Omega} \times \{ 0 \}$ and $M_\eta > 0$ for $\eta$ small enough,  we deduce that  $(\bar{u},\bar{t}) \in (\tilde{\Omega} \setminus \tilde{\Gamma}) \times \{t_1,\ldots,t_{N_T} \}$. At the maximum point, we have 
\[
f^\e(\bar{u},\bar{t}) - g^\e(\bar{u},\bar{t}) - \eta \bar{t} \geq f^\e(\bar{u},\bar{t}-\Delta t) - g^\e(\bar{u},\bar{t}-\Delta t) - \eta (\bar{t} - \Delta t)
\]
and 
\[f^\e(\bar{u},\bar{t}) - g^\e(\bar{u},\bar{t}) - \eta \bar{t} \geq f^\e(y,\bar{t}-\Delta t) - g^\e(y,\bar{t}-\Delta t) - \eta (\bar{t} - \Delta t)
\]
Moreover, using that $f^\e,g^\e$ are respectively sub- and super-solution of \eqref{cauchy-J-discrete-fw} and remarking that \eqref{cond-delta-t} implies in particular that $\displaystyle \Delta t \leq \min_{y \in \tilde{\O}}\frac{1}{J_\e(\bar{u},y)}$, we get
\begin{eqnarray*}
0 
& \geq& \frac{f^\e(\bar{u},\bar{t}) - f^\e(\bar{u},\bar{t}-\Delta t)}{\Delta t} + \max_{y \in \tilde{\Omega}}J_\e(\bar{u},y)  (f^\e(\bar{u},\bar{t}-\Delta t) - f^\e(y,\bar{t}-\Delta t))- \tilde{P}(\bar{u})
\end{eqnarray*}
i.e.
\begin{eqnarray*}
0 
& \geq& f^\e(\bar{u},\bar{t})+ \max_{y \in \tilde{\Omega}}\left(  - f^\e(\bar{u},\bar{t}-\Delta t) + \Delta t  J_\e(\bar{u},y)(f^\e(\bar{u},\bar{t}-\Delta t) - f^\e(y,\bar{t}-\Delta t))\right)- \Delta t\tilde{P}(\bar{u}) \\ 
&=& f^\e(\bar{u},\bar{t}) + \max_{y \in \tilde{\Omega}}\left(-(1-\Delta t  J_\e(\bar{u},y))f^\e(\bar{u},\bar{t}-\Delta t) - \Delta t J_\e(\bar{u},y)f^\e(y,\bar{t}-\Delta t) \right) - \Delta t\tilde{P}(\bar{u}) \\
&\geq& f^\e(\bar{u},\bar{t})+ \max_{y \in \tilde{\Omega}}\bigg((1-\Delta t  J_\e(\bar{u},y))\big(g^\e(\bar{u},\bar{t})-g^\e(\bar{u},\bar{t}-\Delta t) + \eta \Delta t -f^\e(\bar{u},\bar{t})\big)\\
&+& \Delta t J_\e(\bar{u},y)\big(g^\e(\bar u,\bar{t})-g^\e(y,\bar{t}-\Delta t) + \eta \Delta t -f^\e(\bar{u},\bar{t})\big)\bigg)- \Delta t\tilde{P}(\bar{u}) \\
&=& \eta \Delta t + \Delta t \left( \frac{g^\e(\bar{u},\bar{t}) - g^\e(\bar{u},\bar{t}-\Delta t)}{\Delta t} + \max_{y \in \tilde{\Omega}}J_\e(\bar{u},y)  (g^\e(\bar{u},\bar{t}-\Delta t) - g^\e(y,\bar{t}-\Delta t))- \tilde{P}(\bar{u})  \right) \\
&\geq & \eta \Delta t + 0> 0,
\end{eqnarray*}
which is a contradiction.
\end{proof}

We now establish the existence and the regularity properties of a discrete solution.

\begin{lem}[Existence and Lipschitz regularity properties in time and space for the scheme \eqref{cauchy-J-discrete-fw}]\label{lem-existence-lip-fw}
Assume that assumptions~\ref{assum:Om}--\ref{assum:psi}, \ref{assum:gpos}--\ref{assum:gdec} and \ref{assum:psisssol-J}--\ref{assum:compatdomains} hold. Then there exists a discrete solution  $f^\e$ of \eqref{cauchy-J-discrete-fw} and for all $(u,v) \in \tilde{\Omega}^2$ and $t \in \set{t_1,\dots,t_{N_T}}$, the following holds
\begin{align}
\abs{f^\e(u,t) - f^\e(u,t-\Delta t)} &\leq L \Delta t, \label{lip-t-fw} \\
\abs{f^\e(u,t) - f^\e(v,t)} &\leq K\pa{|u-v|+\e} , \label{eq:globlip-space-J-fw}
\end{align}
where $L = \Lip{\tilde{\psi}} + \norm{\tilde{P}}_{L^\infty(\tilde{\Omega}\setminus\tilde{\Gamma})}$ and $K=2c_g^{-1}C_g\bpa{L + \norm{\tilde{P}}_{L^\infty(\tilde{\Omega}\setminus\tilde{\Gamma})}}m^{3/2}$.
\end{lem}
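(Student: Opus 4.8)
The plan is to mirror, in the discrete-time setting, the three-part structure used for the continuous problem in Proposition~\ref{pro:existence-J}, Theorem~\ref{lip-viscosity-J} and Lemma~\ref{lip-viscosity-J-1}, replacing the comparison principle of Proposition~\ref{lem-comparison-cont} by its discrete counterpart Lemma~\ref{lem-comparison}. Existence is immediate since the scheme is explicit: setting $f^\e(\cdot,0)=\tilde\psi$ on $\tilde\Omega$ and $f^\e(u,t_i)=\tilde\psi(u)$ for $u\in\tilde\Gamma$, the update rule
\[
f^\e(u,t_i)=f^\e(u,t_{i-1})-\Delta t\,\babs{\nabla_{J_\e}^- f^\e(u,t_{i-1})}_\infty+\Delta t\,\tilde P(u),\quad u\in\tilde\Omega\setminus\tilde\Gamma,
\]
defines $f^\e$ uniquely at each step; the maximum in the operator is over the finite set $\tilde\Omega$, hence attained, so the recursion is well-posed and produces a discrete solution in the sense of Definition~\ref{def:discretesolution}, with uniqueness following from Lemma~\ref{lem-comparison}.

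For the time regularity \eqref{lip-t-fw}, I would first check, exactly as in \eqref{sub-2-ct}, that the affine functions $f_1^\e(u,t)=\tilde\psi(u)-Lt$ and $f_2^\e(u,t)=\tilde\psi(u)+Lt$ are respectively a discrete sub- and super-solution: the time-translates cancel in the operator, so $\babs{\nabla_{J_\e}^- f_i^\e(u,s)}_\infty\le \Lip{\tilde\psi}\max_{v}J_\e(u,v)|u-v|\le \Lip{\tilde\psi}\le L$ by the very definition $C_g=\sup_{t}tg(t)$, and the boundary inequalities hold since $Lt\ge 0$ and $\tilde P\ge 0$. Feeding $f_1^\e\le f^\e\le f_2^\e$ on the parabolic boundary into Lemma~\ref{lem-comparison} gives the discrete analogue $|f^\e(u,t)-\tilde\psi(u)|\le Lt$ of \eqref{compar-ineq-ct}. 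Then the time-shifted field $f^\e(\cdot,\cdot+\Delta t)$ solves the same scheme with initial datum $f^\e(\cdot,\Delta t)$; applying Lemma~\ref{lem-comparison} to the pair $f^\e(\cdot,\cdot+\Delta t)$ and $f^\e(\cdot,\cdot)$, whose boundary/initial discrepancy is $0$ on $\tilde\Gamma$ and at most $L\Delta t$ at $t=0$ by the previous estimate, yields \eqref{lip-t-fw}.

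For the space regularity \eqref{eq:globlip-space-J-fw}, take $u\in\tilde\Omega\setminus\tilde\Gamma$; rearranging the scheme at a step $(u,s+\Delta t)$ and using \eqref{lip-t-fw} gives $\babs{\nabla_{J_\e}^- f^\e(u,s)}_\infty\le L+\norm{\tilde P}_{L^\infty(\tilde\Omega\setminus\tilde\Gamma)}$. For $v$ with $|u-v|\le a\e$, the monotonicity of $g$ on $[0,a]$ together with $g(a)=c_g$ (assumption \ref{assum:gdec}) extracts, exactly as in the derivation of \eqref{lip-t-space}, the single-term bound $c_g(\e C_g)^{-1}(f^\e(u,s)-f^\e(v,s))\le \babs{\nabla_{J_\e}^- f^\e(u,s)}_\infty$, whence $|f^\e(u,s)-f^\e(v,s)|\le c_g^{-1}C_g(L+\norm{\tilde P}_{L^\infty(\tilde\Omega\setminus\tilde\Gamma)})\e$ after symmetrizing in $u,v$; the case $u\in\tilde\Gamma$ is handled through the lower barrier $\tilde\psi_b$ as in Theorem~\ref{lip-viscosity-J}. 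Finally, since \ref{assum:compatdomains} implies the covering condition \eqref{eq:compatdomains}, the $\delta$-net chaining argument of Lemma~\ref{lip-viscosity-J-1} applies verbatim: any $(u,v)\in\tilde\Omega^2$ are joined by a path in $\tilde\Omega$ with increments $\le a\e$ and length $k\le m\e^{-1}(2\sqrt m\,|u-v|+\e)$, and summing the local estimates along it gives \eqref{eq:globlip-space-J-fw} with $K=2c_g^{-1}C_g(L+\norm{\tilde P}_{L^\infty(\tilde\Omega\setminus\tilde\Gamma)})m^{3/2}$.

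The delicate points are entirely in the time-regularity step: verifying that the affine barriers survive the forward-Euler discretization (which is where the bound $\Lip{\tilde\psi}\le L$ via $C_g$ is used, while the comparison applications themselves rely on the CFL condition \eqref{cond-delta-t} underlying Lemma~\ref{lem-comparison}), and correctly matching boundary/initial data when comparing $f^\e$ with its one-step time shift. A minor bookkeeping issue is that the scheme ties the operator at time $s$ to the increment over $[s,s+\Delta t]$, so the spatial bound is obtained directly at $s\in\set{0,\dots,t_{N_T-1}}$; since all constants are independent of $N_T$, the endpoint $t_{N_T}$ is covered by running the recursion one step further.
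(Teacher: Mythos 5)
Your proposal is correct and follows essentially the same route as the paper: existence via the explicit forward recursion, time regularity \eqref{lip-t-fw} from the affine barriers $\tilde\psi(u)\mp Lt$ combined with the discrete comparison principle (Lemma~\ref{lem-comparison}) applied to the one-step time shift, and space regularity \eqref{eq:globlip-space-J-fw} by bounding $\babs{\nabla_{J_\e}^- f^\e}_\infty$ through the scheme and \eqref{lip-t-fw}, then chaining as in Lemma~\ref{lip-viscosity-J-1}. Your handling of the endpoint $t_{N_T}$ (running the scheme one extra step) and of the boundary case via the lower barrier $\tilde\psi_b$ fills in details the paper leaves implicit, but the argument is the same.
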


\begin{proof}
The existence of a solution is trivial. Moreover, since $\tilde \psi_b$ and $\bar f^\e$ are respectively sub- and super-solution of \eqref{cauchy-J-discrete-fw} and satisfy the boundary conditions, we get, by Lemma \ref{lem-comparison},  that $\tilde \psi_b\le f^\e\le\bar f^\e$.

For the Lipschitz regularity, we begin by showing that for any $0 < t \in \set{t_1,\dots,t_{N_T}}, u \in \tilde{\Omega}$,
\begin{eqnarray*}
|f^\e(u,t) - f^\e (u,0)| \leq L t.
\end{eqnarray*}
To do this, we  define
$$f_1^\e(u,t) = \tilde{\psi}(u) - Lt \qandq  f_2^\e(u,t) = \tilde{\psi}(u) + Lt.$$
In particular, $f_1^\e,f_2^\e$ are respectively sub- and super-solution of \eqref{cauchy-J-discrete-fw}. Indeed, on the one hand, we have
\begin{eqnarray}\label{sub-1}
&&\frac{f_1^\e(u,t) - f_1^\e(u,t - \Delta t)}{\Delta t} = \frac{\tilde{\psi}(u) - Lt- \tilde{\psi}(u) +L(t - \Delta t)}{\Delta t} = - L.
\end{eqnarray}
On the other hand, we have
\begin{equation}\label{sub-2}
\begin{aligned}
&- \max_{v\in\tilde{\Omega}} J_\e(u,v)(f_1^\e(u,t-\Delta t) - f_1^\e (v,t-\Delta t)) + \tilde{P}(u) \\
&= -\max_{v\in\tilde{\Omega}} \frac{g\pa{\frac{|u-v|}{\e}}}{\e C_g}  (\tilde{\psi}(u) - \tilde{\psi}(v)) + \tilde{P}(u) \\
&\geq -\max_{v\in\tilde{\Omega}} \frac{g\pa{\frac{|u-v|}{\e}}}{\e C_g} \Lip{\tilde{\psi}} |u-v|-\norm{\tilde{P}}_{L^\infty(\tilde{\Omega}\setminus\tilde{\Gamma})} \\
&= -L 
\end{aligned}
\end{equation}
Therefore, from \eqref{sub-1} and \eqref{sub-2} we get the conclusion. The proof for $f^\e_2$ is similar and we skip it.

Moreover, for any $(u,t) \in \tilde{\Gamma} \times  \set{t_1,\ldots,t_{N_T}} \cup \tilde{\Omega} \times \{ 0 \} $ we have
\begin{eqnarray}\label{compar-ineq}
f_1^\e(u,t) \leq f^\e (u,t) = \tilde{\psi}(u) \leq f_2^\e(u,t).
\end{eqnarray}
Hence, by the comparison principle in Lemma~\ref{lem-comparison}, we get that for any $u \in \tilde{ \Omega}, t\geq 0$,
\[
f^\e(u,0) - L t \leq f^\e(u,t) \leq  f^\e(u,0) + Lt.
\]
We now apply this estimate to get \eqref{lip-t-fw}. Let $ u \in \tilde{\Omega} \setminus \tilde{\Gamma}, t \in \{t_1,\dots t_{N_T}\}$ (the result being trivial if $u \in \tilde{\Gamma}$) and  set $s = t - \Delta t$. We have that $f^\e(u,s)$ is a solution of \eqref{cauchy-J-discrete-fw} with initial condition $f^\e(u,0)$ and $f^\e(u,s+\Delta t)$ is also a solution of \eqref{cauchy-J-discrete-fw} with initial condition $f^\e(u,\Delta t)$. Then by comparison principle Lemma~\ref{lem-comparison} and \eqref{compar-ineq}, we obtain for any $u \in \tilde{\Omega}$, $t>0$,
\begin{align*}
|f^\e(u,t) - f^\e(u,t-\Delta t)| 
&= |f^\e(u,s+ \Delta t) - f^\e(u,s)|\\
&\leq |f^\e(u,\Delta t) - f^\e(u,0)|\\
&\leq L \Delta t. 
\end{align*}
The proof of the space regularity estimate is the same as that of \eqref{eq:globlip-space-J}.
\end{proof}

\section{Well-posedness and regularity properties of \eqref{cauchy-J-discrete-bw}} 

For the reader's convenience, we establish the existence of a discrete solution for \eqref{cauchy-J-discrete-bw}.
\begin{lem}[Existence of discrete solution of \eqref{cauchy-J-discrete-bw}]\label{discrete-existence}
Assume that assumptions~\ref{assum:Om}--\ref{assum:psi}, \ref{assum:gpos}--\ref{assum:gdec} and \ref{assum:psisssol-J} hold. Then there exists a discrete solution  $f^\e$ of \eqref{cauchy-J-discrete-bw}. 
\end{lem}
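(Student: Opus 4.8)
The plan is to solve \eqref{cauchy-J-discrete-bw} step by step in time, reducing existence to the solvability of a single implicit update. Since $f^\e(\cdot,0)=\tilde\psi$ is prescribed, it suffices to show by induction on the time index $i$ that, given the values $H\eqdef f^\e(\cdot,t_{i-1})$ on $\tilde\Omega$, there exists $F\eqdef f^\e(\cdot,t_i):\tilde\Omega\to\R$ satisfying
\begin{equation*}
F(u)+\Delta t\,\max_{v\in\tilde\Omega}J_\e(u,v)\bpa{F(u)-F(v)}=H(u)+\Delta t\,\tilde P(u),\qquad u\in\tilde\Omega\setminus\tilde\Gamma,
\end{equation*}
together with $F(u)=\tilde\psi(u)$ for $u\in\tilde\Gamma$ (recall \eqref{eq:normnablaJe}). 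Because $\tilde\Omega$ is finite by \ref{assum:Om}, this is a finite, albeit nonlinear, system, and the whole argument is elementary.

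First I would set up a one-step solution map. Work on the affine set $\mathcal D=\set{F:\tilde\Omega\to\R:\ F|_{\tilde\Gamma}=\tilde\psi}$ and, for $u\in\tilde\Omega\setminus\tilde\Gamma$, consider the scalar function
\begin{equation*}
\Phi_{u,F}(\xi)=\xi+\Delta t\,\max_{v\in\tilde\Omega}J_\e(u,v)\bpa{\xi-F(v)}.
\end{equation*}
The key observation is that, the values $F(v)$ for $v\neq u$ being fixed, $\Phi_{u,F}$ is continuous and \emph{strictly} increasing (the identity part is strictly increasing and the maximum of the affine nondecreasing maps $\xi\mapsto J_\e(u,v)(\xi-F(v))$ is nondecreasing), with $\Phi_{u,F}(\xi)\to\pm\infty$ as $\xi\to\pm\infty$ (the $v=u$ term vanishes, so the maximum is nonnegative and tends to $0$ as $\xi\to-\infty$). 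Hence $\Phi_{u,F}$ is a bijection of $\R$, and I define $T(F)(u)$ to be the unique root of $\Phi_{u,F}(\xi)=H(u)+\Delta t\,\tilde P(u)$, while $T(F)=\tilde\psi$ on $\tilde\Gamma$. A fixed point of $T$ is exactly a solution of the one-step problem. Since $\Phi_{u,F}$ depends continuously on $F$ and strictly monotonically on $\xi$, the map $T:\mathcal D\to\mathcal D$ is continuous; moreover $\Phi_{u,F}$ is nonincreasing in each $F(v)$ by nonnegativity of $J_\e$ (assumption \ref{assum:gpos}), which makes $T$ \emph{order preserving}: $F\le F'\Rightarrow T(F)\le T(F')$.

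Finally I would produce a fixed point by trapping $T$ in an order interval. Choosing a constant $C$ large enough that $C\ge\max_{\tilde\Gamma}\tilde\psi$ and $C\ge\max_{u}\pa{H(u)+\Delta t\,\tilde P(u)}$, the function $\widebar{F}$ equal to $\tilde\psi$ on $\tilde\Gamma$ and to $C$ on $\tilde\Omega\setminus\tilde\Gamma$ satisfies $\Phi_{u,\widebar{F}}(C)\ge C\ge H(u)+\Delta t\,\tilde P(u)$ (the max term at $\xi=C$ is nonnegative since $\widebar{F}\le C$ everywhere), whence $T(\widebar{F})\le\widebar{F}$; symmetrically, a sufficiently negative constant yields $\underline F$ with $T(\underline F)\ge\underline F$ and $\underline F\le\widebar{F}$. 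By monotonicity $T$ maps the compact convex order interval $[\underline F,\widebar{F}]\subset\mathcal D$ into itself, so either Brouwer's theorem or the monotone iteration $T^k(\underline F)$ (which increases, converges in the finite-dimensional space $\mathcal D$, and whose limit is a fixed point by continuity of $T$) produces the desired discrete solution; iterating over $i=1,\dots,N_T$ finishes the proof. The only genuine obstacle is the implicit coupling, namely that the unknown $F(u)$ appears both linearly and inside the nonlinearity $\max_v J_\e(u,v)(F(u)-F(v))$; this is precisely what the scalar strict monotonicity of $\Phi_{u,F}$ disentangles, and it is also the reason why, in contrast with the explicit scheme, no CFL restriction \eqref{cond-delta-t} is needed here.
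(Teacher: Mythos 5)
Your proof is correct, but it solves the one-step problem by a genuinely different mechanism than the paper. The paper also reduces to a single implicit time step by induction, but then runs a Perron-type argument: $f^{n+1}$ is defined as the supremum of all discrete sub-solutions lying below the super-solution $\widebar{f}^\e$ built in Proposition~\ref{pro:existence-J}; monotonicity of the scheme makes this supremum a sub-solution, and a bump argument makes it a super-solution — if the super-solution inequality failed at some $\bar u$, one solves the scalar equation in the unknown value at $\bar u$ (solvable by exactly the strict-monotonicity-in-$\xi$ observation you make for $\Phi_{u,F}$) to get a strictly larger sub-solution, contradicting maximality. You replace Perron by a fixed point of the coordinate-wise solution map $T$: explicit constant barriers trap $T$ in a compact order interval, and Brouwer, or the monotone iteration $T^k(\underline F)$, yields the fixed point. (The slight ambiguity in your $\Phi_{u,F}$ — whether the $v=u$ term reads $J_\e(u,u)(\xi-F(u))$ or is frozen to $0$ — is harmless: both readings give a strictly increasing surjection of $\R$, and they coincide at any fixed point.) What your route buys: it is self-contained and constructive (the monotone iteration is an algorithm), and it never invokes assumption~\ref{assum:psisssol-J} nor the barrier $\widebar{f}^\e$; only nonnegativity of $J_\e$, finiteness of $\tilde\Omega$, and boundedness of $\tilde\psi$ and $\tilde P$ are used, so the lemma actually holds under weaker hypotheses. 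What the paper's route buys: by reusing $\tilde\psi_b$ and $\widebar{f}^\e$ it obtains for free the ordering $\tilde\psi_b \le f^\e \le \widebar{f}^\e$, i.e., uniform bounds on the discrete solution of the kind the subsequent error analysis consumes, whereas your barriers are rebuilt at each time step from the previous iterate. Both arguments ultimately rest on the same two ingredients — monotonicity of the implicit scheme in the off-diagonal values and solvability of the scalar diagonal equation — and both correctly explain why no CFL condition~\eqref{cond-delta-t} is needed for the backward scheme.
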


\begin{proof}
The proof is very close to the one of Proposition~\ref{pro:existence-J} (and we recall all the notations there), and we therefore give here only a sketch of the proof. First, it is easy to check that $\tilde \psi_b$ and $\bar f^\e$ are respectively sub- and super-solution of \eqref{cauchy-J-discrete-bw} and satisfy the boundary conditions.

We assume that there exists a solution $f^{n}$ at step $n$ and we will construct a solution $f^{n+1}$ at step $n+1$.  Let us define
\[
f^{n+1}=\sup\set{w \textrm{ sub-solution at step } n+1 \textrm{ s.t } w\le \bar f^\e} .
\]
In particular, this set is nonempty since $\tilde \psi_b$ belongs to it. Moreover, we remark, by monotonicity, that if $\pa{f^{n+1,i}}_{i \in \N}$ is a family of discrete sub-solutions at step $n+1$, then $f^{n+1}=\sup_{i} f^{n+1,i}$ is still a sub-solution. Hence $f^{n+1}$ is a discrete sub-solution. Let us prove that $f^{n+1}$ is a super-solution. By contradiction, assume that there exists $\bar u\in   (\tilde \Omega \setminus \Gamma_n) \times \set{t_1,\ldots,t_{N_T}}$ such that (with the notation $f^n(u)=f(u,t_n)$)
\[
\frac{f^{n+1}(\bar u) - f^{n}(\bar u)}{\Delta t} < - \abs{\nabla_{J_\e}^- f^{n}(\bar u)}_\infty + \tilde{P}(\bar u).
\]
This implies in particular that $f^{n+1}(\bar u)<\bar f^\e(\bar u, t_{n+1})$. Now, let us consider the solution $w_{\bar u}$ of
\[
\frac{w_{\bar u} - f^{n}(\bar u)}{\Delta t} = - \max_{ v\in\tilde \Omega}J_\e (\bar u,v)(w_{\bar u}-f^{n+1}(v)) + \tilde{P}(\bar u).
\]
The existence of such a solution comes from the fact that the left hand-side is increasing in $w_{\bar u}$ while the right-hand side is non-increasing. Then, using the monotonicity of the scheme, it is easy to prove that $w_{\bar u}>f^{n+1}(\bar u)$ and $w$ defined by 
\[
w(u)=\left\{\begin{array}{ll}
w_{\bar u}&{\rm if}\; u=\bar u\\
f^{n+1}(u)& {\rm otherwise}
\end{array}
\right.
\]
is a discrete sub-solution of \eqref{cauchy-J-discrete-bw} at step $n+1$.  This contradicts the definition of $f^{n+1}$. The proof is completed.
\end{proof}

\section{Proof of Lemma~\ref{lem:compatassum-graph}}\label{sec:prooflemcompatassum-graph}
We will use again compactness of $\Omega$ and a covering argument with a finite $\delta$-net consisting of $N(\Omega,\delta)$ points, and conclude by the union bound, after using a standard estimate of $N(\Omega,\delta)$ (called the covering number of $\Omega$). We denote for short $[N]=\set{1,\ldots,N}$ for any $N \in \N^*$.

Let $S_\delta = \set{x_1,x_2,\ldots,x_{N(\Omega,\delta)}}$ be a $\delta$-net $\Omega$ in the Euclidian distance, i.e., $\Omega \subseteq \bigcup_{x \in S_\delta} \ball_\delta(x)$. We then have
\begin{align*}
\max_{x \in \Omega} d(x,V_n) \leq \max_{j \in [N(\Omega,\delta)]}\max_{x \in \ball_\delta(x_j)} d(x,V_n) .
\end{align*}
For each $j \in [N(\Omega,\delta)]$, let $Z_j$ be the number of random variables $\pa{u_i}_{i=1}^n$ falling into $\ball_{\delta}(x_j)$. Obviously, $Z_j$ is a Binomial random variable with parameters $(n,p_j)$, where $p_j = \mu(\ball_\delta(x_j)) \geq c \vol(\ball_\delta(0)) = c \delta^m \vol(\ball(0))$, where $c = \inf_{\Omega} \rho > 0$ by \ref{assum:densitylbd}, and we used the shorthand notation $\ball(0)$ for the unit Euclidian ball. Thus, using the union bound, we get
\begin{align*}
\Pr\pa{\max_{x \in \Omega} d(x,V_n) > 2\delta} 
&\leq \Pr\pa{\max_{j \in [N(\Omega,\delta)]}\max_{x \in \ball_\delta(x_j)} d(x,V_n) > 2\delta} \\
&\leq \sum_{j \in [N(\Omega,\delta)]}\Pr\pa{\max_{x \in \ball_\delta(x_j)} d(x,V_n) > 2\delta} \\
&\leq \sum_{j \in [N(\Omega,\delta)]}\Pr\pa{Z_j = 0} \\
&= \sum_{j \in [N(\Omega,\delta)]}(1-p_j)^n \\
&\leq N(\Omega,\delta) \pa{1-c \delta^m \vol(\ball(0))}^n .
\end{align*}
Since $\Omega$ is compact, there exists $r > 0$ such that $\Omega \subseteq r\ball(0)$. It then follows from standard estimates, see \cite[Lemma~4.10]{Pisier89} that
\[
N(\Omega,\delta) = N(\Omega/r,\delta/r) \leq \pa{1+\frac{2r}{\delta}}^m .
\]
We therefore arrive at the bound
\begin{align*}
\Pr\pa{\max_{x \in \Omega} d(x,V_n) > 2\delta} 
&\leq \pa{1+\frac{2r}{\delta}}^m \pa{1-c \delta^m \vol(\ball(0))}^n \\
&\leq e^{-nc\delta^m\vol(\ball(0))+m\log\pa{1+\frac{2r}{\delta}}} .
\end{align*}
Take $\delta^m = \frac{(1+\tau)}{c\vol(\ball(0) }\frac{\log n}{n}$, for any $\tau > 0$. Thus, for $n$ large enough, one has $\delta \leq r$, and in turn the above bound becomes
\begin{align*}
\Pr\pa{\max_{x \in \Omega} d(x,V_n) > 2\delta} 
&\leq c (3r)^m \vol(\ball(0)) e^{-(1+\tau)\log n - \log (1+\tau) - \log\log n + \log n} \\
&\leq c (3r)^m \vol(\ball(0)) e^{-\tau\log n} = c (3r)^m \vol(\ball(0)) n^{-\tau} .
\end{align*}
By the Stirling formula, we have
\[
\vol(\ball(0))=\frac{2\pi^{m/2}}{m\Gamma(m/2)} = \frac{1}{\sqrt{m\pi}} \pa{\frac{2\pi e}{m}}^{m/2} e^{\theta(m/2)/(6m)}
\] 
with $\theta(m/2) \in [0,1]$. Thus, taking 

\[
\e_{n}^{1+\nu} = 8a^{-1}\sqrt{2\pi e^{4/3}}\pa{\frac{(1+\tau)}{\sqrt{\pi m}c}}^{1/m} \pa{\frac{\log n}{n}}^{1/m}, 
\]
we have $a\e_{n}^{1+\nu}/(4\sqrt{m}) \geq \delta$, and thus \ref{assum:compatdomains} holds with probability at least $1-K_2n^{-\tau}$.

Let us turn to the estimating the probability of the event

\[
\set{\distH(\Gamma,\Gamma_n) \leq a\e_n^{1+\nu}/(2\sqrt{m})} .
\] 

First, with the construction of Definition~\ref{def:randgraph}, one can assert that $\Gamma_n \neq \emptyset$ with probability larger than $1 - c (3r)^m \vol(\ball(0)) n^{-\tau}$. To show this, we argue by contradiction, assuming that $\forall u \in V_n$, $d(u,\Gamma) > 2\delta$, which entails that
\[
|u-x| > 2\delta, \quad \forall (u,x) \in V_n \times \Gamma .
\]
Let $j \in [N(\Omega,\delta)]$ such that $\Gamma \cap \ball_\delta(x_j) \neq \emptyset$ (which exists by definition of the $\delta$-net). We have shown above that with probability at least $1 - c (3r)^m \vol(\ball(0)) n^{-\tau}$, each ball $\ball_\delta(x_j)$ contains at least one point $u \in V_n$, and thus, for such a point, $|u-x| \leq 2\delta$ for all $x \in \Gamma \cap \ball_\delta(x_j)$, leading to a contradiction. In turn, we deduce that with the same probability, we have
\[
\max_{u \in \Gamma_n} d(x,\Gamma) \leq 2\delta \leq a\e_n^{1+\nu}/(2\sqrt{m}) .
\]

To conclude, it remains to show that
\[
\max_{x \in \Gamma} d(x,\Gamma_n) \leq 2\delta ,
\]
with the same probability. For this, let $\set{x_j \in S_\delta:~ \Gamma \cap \ball_\delta(x_j) \neq \emptyset}$. This is a subcover of $\Omega$ which is a $\delta$-net of $\Gamma$. Thus Arguing as we did above to bound $\max_{x \in \Omega} d(x,V_n)$, we get the claimed bound. Finally, the bound on $\distH(\Gamma,\Gamma_n)$ follows from above using the union bound. The latter also yields that the event $\En$ holds with the given probability. \qed

\medskip

\paragraph{\textbf{Acknowledgement.}}
This work was supported by the Normandy Region grant MoNomads, and partly by the European Union's Horizon 2020 research and innovation programme under the Marie Sk{\l}odowska-Curie grant agreement No 777826 (NoMADS). We would like also to thank A. El Moataz for fruitful discussions.

\bibliographystyle{plain}
\bibliography{biblio}

\end{document}